\documentclass[a4paper,10pt]{amsart}

\usepackage{amssymb}
\usepackage{amsmath}
\usepackage{amscd}
\usepackage{amsthm}
\usepackage{graphicx}
\usepackage[all]{xy}

\usepackage{color}          
\usepackage{epsfig}

\usepackage{lineno}

\newcommand{\C} {\mathbb C}

\newcommand{\cal}{\mathcal}


\newtheorem{theorem}{Theorem}[section]
\newtheorem{lemma}[theorem]{Lemma}
\newtheorem{corollary}[theorem]{Corollary}
\newtheorem{proposition}[theorem]{Proposition}

\theoremstyle{definition}

\title{On the fixed points of the Ruelle operator}
\author{Carlos Cabrera and Peter Makienko}

\begin{document}
\begin{abstract}
We discuss the relation between the existence of fixed points of the Ruelle 
operator acting on different Banach spaces, with Sullivan's 
conjecture in holomorphic dynamics.  
 \end{abstract}
\maketitle
\footnotetext{This work was partially supported by PAPIIT IN-102515 
and CONACYT CB2015/255633.}

\section{Introduction}

Let $Rat_d(\overline{\C})$ be the set of all rational maps on the 
Riemann sphere $\overline{\C}$ of  degree $d$. 
When $R$ is an element  of $Rat_d(\overline{\C})$, the postcritical set 
of $R$ is given by $$
P_R=\overline{\bigcup_k\bigcup_i R^i(c_k)},
$$  
where the union is taken over all critical points $c_k$ of $R$ and $i>0$. 
The \textit{Julia set} $J_R$ is the set of accumulation points of all 
periodic points of $R$, with all isolated points removed. The Fatou set is 
given by $F_R=\overline{\C}\setminus J_R$.
The map $R$ is called \textit{hyperbolic} when  $P_R\cap J_R=\emptyset$.  
The Fatou conjecture states: \textit{Hyperbolic maps are open and everywhere 
dense in $Rat_d(\overline{\C})$.}
 
Recall that a rational map $R$ is \textit{$J$-stable} if 
there is an open neighbourhood $U$ of $R$ in $Rat_d(\overline{\C})$, 
such that for $Q \in U$ there exists a homeomorphism 
$h_Q:J_R\rightarrow J_Q$, quasiconformal in Pesin's sense (also known as  metric quasiconformal), with 
$$Q=h_Q\circ R\circ h_Q^{-1}.$$

Due to a result of R. Ma\~n\'e, P. Sad and D. Sullivan (see \cite{MSS}),  the set of $J$-stable 
maps forms an open and everywhere dense subset of $Rat_d(\overline{\C})$, even 
more, a $J$-stable map is hyperbolic if and only if there is no invariant 
Beltrami differential supported on the Julia set.
Since hyperbolic maps are $J$-stable,  the Fatou conjecture 
becomes: \textit{Every $J$-stable map is hyperbolic.} 

An  \textit{invariant Beltrami differential}  $\mu$ is a 
$(-1,1)$-differential form locally expressed by $\mu(z)\frac{
d\overline{z}}{dz}$ and whose coefficient $\mu(z)$ is an $L_\infty$ function 
satisfying 
$$
\mu(z)=\mu(R(z))\frac{\overline{R'(z)}}{R'(z)}.
$$ 
That is, $\mu$ is a fixed point of the Beltrami operator, as defined below, 
acting on the space $L_\infty(\overline{\C})$ with respect to the planar 
Lebesgue measure. In this paper, whenever is clear from the context, 
we will denote both the differential and its coefficient by the same letter.

Sullivan's conjecture states: \textit{There exists an invariant Beltrami 
differential supported on the Julia set if and only if $R$ is a flexible 
Latt\`es map.} For the definitions and further properties of Latt\`es 
maps, see Milnor's paper \cite{MilLat}. 

Note that if $R$ is hyperbolic then $R^n$ is hyperbolic and hence $J$-stable 
for every $n>1.$ We have the following statement (see Theorem 2.1 in \cite{CMdecomp}): If there exists  $n>1$ such that the iterated map $R^n$ is 
$J$-stable then $R$ is hyperbolic. Therefore the Fatou conjecture is true when 
one considers an iterated rational map. On the other hand, Sullivan's 
conjecture predicts not only the absence of fixed points for the Beltrami 
operator but also the lack of periodic points for this operator.
In other words, there is no eigenvalue of the Beltrami operator which is a root 
of unity. Hence Sullivan's conjecture can be interpreted as a spectral 
problem for a semigroup of Beltrami operators.

According to Sullivan's dictionary between Kleinian groups and 
holomorphic dynamics, rational maps correspond to finitely 
generated Kleinian groups. In this setting, we can reformulate Teichm\"uller 
theorem (see \cite{GardLakic}) as follows:

\textit{Let $\Gamma$ be a finitely generated Kleinian group. For every 
$\Gamma$-invariant Beltrami differential $\mu$ inducing a non-trivial 
quasiconformal deformation (see definition below) on the associated Riemann 
surface $S_\Gamma$, 
there exists a $\Gamma$-invariant holomorphic $2$-form $\phi$ such that $\int_F 
\mu \phi\neq 0$ on any fundamental domain $F$ of $\Gamma$.}

In other words the following separation principle holds: The space of 
invariant holomorphic $2$-forms separates the space of quasiconformally 
non-trivial invariant Beltrami differentials. 

The separation principle is well-known in ergodic theory for bounded semigroups 
of linear endomorphisms of a Banach space. In fact, it is the subject of many 
ergodic theorems and is one of the oldest principles in this theory.

Due to the observations above, in this article we discuss the following 
question:

\emph{Given a representation of the dynamics of $R$ into a semigroup of contractions 
of a suitable Banach space, what consequences do arise from the existence of a 
common non-trivial fixed point of such a representation? }

Furthermore, we will discuss what happens when these representations satisfy a 
separation principle (definitions will be given below).

To keep in line with Sullivan's conjecture we consider representations that 
arise as versions of complex pull-back or push-forward 
operators acting on either invariant subspaces $X \subset L_p(W)$ (not 
necessarily closed) or on spaces which are predual, dual or bidual to $X$. 
Here $W$ is an  $R$-invariant set (that is $R(W)\subset W$)  of positive 
Lebesgue measure in the Riemann sphere and $1\leq p \leq \infty$.

Throughout our discussion, unless otherwise stated, 
all $L_p$ spaces are taken with respect to the planar Lebesgue measure on the 
dynamical plane.  Also we will use one or some combination of the following restrictions on $R$:
\begin{enumerate}
 \item The postcritical set $P_R\neq \overline{\C}.$
 \item The postcritical set has Lebesgue measure $0.$
 \item The intersection $P_R\cap F_R$ is finite and the Fatou 
 set $F_R$ does not contain parabolic or rotational domains. 
 
\end{enumerate}

In each section we will specify which restrictions apply. But let us note that the class
of rational maps satisfying the restrictions (1)-(3) is still relevant for the Fatou-Sullivan
conjecture.  In fact, when $F_R$ has no rotation nor parabolic domains and $J_R$ is connected, 
an application of quasiconformal 
surgery   gives that  the $J$-stability component of $R$ contains a map $Q$ which does 
not admit  non-trivial quasiconformal deformations on the Fatou set. 
Moreover, by Ma\~n\'e-Sad-Sullivan's theorem $Q$ is unique up to 
M\"obius conjugacy only in the case when $J_Q$ does not support an invariant Beltrami differential. 

Also, after the celebrated examples of Julia sets of positive measure 
given by X. Buff and A. Cheritat of Cremer polynomials, and by A. Avila and M. 
Lyubich of infinitely renormalizable polynomials, the main conjecture is that 
the postcritical set either has measure zero or is the whole Riemann sphere. So if
$F_R\neq \emptyset$, the restriction (2) is natural. 

We brief the main theorems of the article into the following two theorems. We say that 
an integrable function $f$  is \textit{regular} whenever its $\overline{\partial}$ derivative, 
in the sense of distributions, is a finite 
complex valued measure. Examples of non-regular functions are given by characteristic 
functions of suitable compact subsets of $\C$ (see the discussion after Theorem  
\ref{thm.regularfixed}  and Proposition \ref{prop.nonregular}). 

Other notations and definitions will be given in Section 2.
\begin{theorem}[Fixed Points of Ruelle Operator] 
Let $R$ be a rational map. Then the following hold true.
\begin{enumerate}
 \item No simple function is invariant under Ruelle operator $R^*.$ Moreover, if
 $S_R$ is connected. Then a non-zero regular function $f$ is a fixed point of $R^*$
 if and only if $R$ is a flexible Latt\`es map.
 \item (The $L_p$-case)  Let  $p$ and $q$ be such that  $1<p<\infty$ and  $\frac{1}{p}+\frac{1}{q}=1.$
 The operator $R^{*}_p:L_p(\C)\rightarrow L_p(\C)$ given by $$ 
R^{*}_{p}(\phi)=\frac{1}{\sqrt[q]{d}}\sum 
\phi(\zeta_i)\frac{\zeta_i'}{\overline{\zeta_i '}}|\zeta_i'|^{\frac{2}{p}},
$$ 
has a non-zero fixed point if and only if $R$ is a flexible Latt\`es map.
\item If $R$ is mixing with respect to any finite invariant measure absolutely continuous and $P_R\neq \overline{\C}$ then $R^*$ has a non-zero fixed point in $L_1(J_R)$ if and only if $R$ is a flexible Latt\`es map. 
\end{enumerate}
\end{theorem}

Part (1) follows from Theorem \ref{thm.regularfixed} and Corollary \ref{cor.simplefunc}. 
Note that Theorem \ref{thm.regularfixed} provides the more general case when $S_R$ is not 
connected.
Part (2) is the content of Theorem \ref{thm.lpcase}.
Part (3) is Theorem \ref{th.mixing}.

For every critical value  $v$  of $R$  define the operator 
$E_v:L_\infty(\C)\rightarrow \ell_\infty$ by the formula 
$$
E_v(\psi)=\left( \int_{\C} \psi(z)A_{n}(\gamma_v)(z)|dz|^2\right )_{n=0}^{\infty},
$$ where $A_n(\gamma_v)=\frac{1}{n}\sum_{i=0}^{n-1} R^{*i}(\gamma_v)$ 
is the C\'esaro averages and $\gamma_v(z)=\frac{v(v-1)}{z(z-1)(z-v)}$.

\begin{theorem}(Invariant Beltrami differentials). Given a rational map $R$, the following 
hold true.
\begin{enumerate}
 \item If $P_R$ has zero Lebesgue measure then $R$ satisfies Sullivan's conjecture if and 
 only if $R^*$ is mean-ergodic on $Hol(R)$ with the topology inherited from $L_1(J_R).$
 \item Assume $P_R\neq \overline{\C}$ and that there are no rotational domains. If $T$ is the 
 Thurston operator for $R$ then $T:B_0(S_R)\rightarrow B_0(S_R)$ is mean-ergodic. 
 Moreover, let $\alpha\in B(S_R)$ with $T(\alpha)=\alpha$ and $\|\alpha\|_T=1$, 
 where $\|\cdot \|_T$ is the Teichmuller norm. If 
 $$\inf_{\phi\in B_0(S_R)} \|\alpha -\phi\|_T<1$$ then
$R$ is a flexible Latt\`es map.
\item Assume $P_R\neq \C$, the set $P_R\cap F_R$ is finite  and $P_R$ does not admit 
finite invariant absolutely continuous complex valued measures. Then $R$ satisfies 
Sullivan's conjecture if and only if the operator $E_v$ is compact for every critical value $v.$ Moreover, 
the operator $(Id-T):B(S_R)\rightarrow B(S_R)$ is compact if and only if $R$ is postcritically finite. 
\end{enumerate}
\end{theorem}

Part (1) follows from Theorem \ref{th.MeanErg.1} which is given in a more general situation where $P_R$ is allowed to have positive Lebesgue measure. 
Part (2) follows from Theorem \ref{th.ThurstonMeanergodic} and Corollary \ref{cor.Teichflexible}.
Part (3) follows from Theorem \ref{th.Evcompact} and Corollary \ref{cor.Evcompact}.

The article also includes several results not mentioned in the theorems above which 
have independent interest, for example Proposition \ref{prop.contId} compares the 
masses over the Julia and the Fatou sets for $\gamma\in Hol(R).$ Also see Theorem
\ref{th.dissipative}, Theorem \ref{th.kukareko}, Theorem \ref{th.coseparable} and
the discussions in the respective sections. 

Most results in the paper are given in terms of ergodic theory and suggest that the 
Sullivan's conjecture holds true.  

\par\medskip\noindent \textbf{Acknowledgments.}
The authors would like to thank A. Poirier and  anonymous referees  for useful 
comments and suggestions on an early draft of this paper.

\section{The Pull-back and Push-forward actions.}

In this section we give the definition of the Ruelle transfer operator for 
a rational map $R$ which is a complex version of the Perron-Frobenius operator. 
Perhaps, the first instance of this operator appeared in 
Ruelle's paper \cite{RuelleZeta}. 

We take the standpoint of the theory of quasiconformal deformations, which deals 
with a dual interplay between Beltrami differentials and quadratic 
differentials (i.e, between some $(-1,1)$-forms and $(2,0)$-forms). 
In fact, there is a natural action on both spaces  induced by a degree 
$d$ rational function $R$, once we think of $R$ as a ``local change of 
variables''. 

Let $F_{m,n}$ be the space of all $(m,n)$ forms $\alpha(z)=\phi(z)dz^md\overline{z}^n $ 
where $\phi$ is a complex valued measurable function on $\C$. The 
\textit{pull-back} operator acting 
on $F_{m,n}$ 
is given by $$R_{*(m,n)}(\alpha)=\alpha\circ R= 
\phi(R(z)) R'(z)^m \overline{R'(z)}^ndz^m
d\overline{z}^n.$$ The \textit{push-forward} operator on $F_{m,n}$ is given 
by $$R^*_{(m,n)}(\alpha)=\sum \alpha(\zeta_i)$$ where the sum is taken over all 
branches $\zeta_i$ of  $R^{-1}.$ Therefore we have  $R^*_{m,n}\circ R_{*\,m,n}= 
deg(R) Id.$ 

The \textit{Beltrami operator }is $B_R=R_{*(-1,1)}$. The 
operator  $|B_R|=R_{*(0,0)}$ is called the \textit{modulus} of  $B_R$ and 
satisfies 
$|B_R||\phi|=|B_R(\phi)|$ almost everywhere for every measurable function 
$\phi.$ 

The \textit{Ruelle transfer operator}, the \textit{Ruelle operator} for short, 
is $R^*=R^*_{(2,0)}$, while the operator $|R^*|=R^*_{(1,1)}$ is called the 
\textit{modulus of Ruelle operator} and satisfies $|R^*(\phi)|\leq |R^*||\phi|$ 
almost everywhere for every measurable function $\phi.$  The modulus of the Ruelle 
operator is also known as the \textit{Perron-Frobenius} operator for the map 
$R$ or as the \textit{push-forward} operator on the space of absolutely 
continuous measures.

Using coefficients the Beltrami operator  and its modulus are defined by 
the formulas
$$B_R(\phi)=\phi(R)\frac{\overline{R'}}{R'} \textnormal{ 
and }|B_R|(\phi)=\phi(R)$$ respectively, where $\phi$ is a measurable function 
with respect to the Lebesgue measure. 

In turn, the Ruelle operator and its modulus are given by

$$R^*(\phi)=\sum \phi(\zeta_i)(\zeta'_i)^2 \textnormal{ and } |R^*|(\phi)=\sum 
\phi(\zeta_i)|\zeta'_i|^2,$$ respectively.  Both  sums are taken over all 
branches $\zeta_i$ of $R^{-1}$.
We end this section with the following simple facts. 

\begin{proposition}\label{prop.bounds}  Let $A$ be a Lebesgue measurable set,  
$\mu\in L_\infty(\C)$ and $\phi\in L_1(\C)$. Then the following statements hold: 
\begin{enumerate}
 \item 
$$\int_A \mu(z) R^*(\phi(z))|dz|^2= \int_{R^{-1}(A)} 
B_R(\mu(z)) \phi(z)|dz|^2.$$
\item $$\int_A \mu(z) |R^*|(\phi(z))|dz|^2=\int_{R^{-1}(A)} 
\mu(R(z)) \phi(z)|dz|^2.$$

\item Define $R_*(\phi)=\frac{R_{*\,(2,0)}(\phi)}{deg(R)}=\frac{\phi(R) 
R'^{2}}{deg(R)}$, then
$$\int_A |R_*(\phi(z))| |dz|^2\leq \int _{R(A)} |\phi(z)| |dz|^2.$$

\item If $\phi$ is a holomorphic function outside the postcritical set 
$P_R$, 
then $R^*\phi$ is also a holomorphic function outside $P_R.$
\end{enumerate}
\end{proposition}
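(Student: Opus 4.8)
The plan is to prove the four items essentially by unwinding the definitions together with a single change-of-variables computation, since items (1)-(3) are all variants of the same identity. For item (1), I would start from the definition $R^*(\phi) = \sum_i \phi(\zeta_i)(\zeta_i')^2$ and split the domain $\C$ into sheets: over (almost) every point $w \in A$ the branches $\zeta_i$ parametrize the $d$ preimages in $R^{-1}(w)$. The key move is to recognize that integrating $\mu(w)\phi(\zeta_i(w))(\zeta_i'(w))^2$ against $|dw|^2$ and summing over branches is exactly the statement that one pulls each term back by $z = \zeta_i(w)$, i.e. $w = R(z)$, under which $|dw|^2 = |R'(z)|^2|dz|^2$ and $(\zeta_i'(w))^2 = 1/R'(z)^2$. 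Hence $\mu(w)\phi(\zeta_i(w))(\zeta_i'(w))^2\,|dw|^2$ becomes $\mu(R(z))\,\phi(z)\,\dfrac{|R'(z)|^2}{R'(z)^2}\,|dz|^2 = \mu(R(z))\dfrac{\overline{R'(z)}}{R'(z)}\phi(z)\,|dz|^2 = B_R(\mu)(z)\,\phi(z)\,|dz|^2$, and summing the sheets reassembles $R^{-1}(A)$; this gives (1). Item (2) is the same computation with $|\zeta_i'|^2$ in place of $(\zeta_i')^2$, so the Jacobian factor $|R'(z)|^2$ cancels completely and one is left with $\mu(R(z))\phi(z)$, and item (3) is again the same change of variables $w = R(z)$ applied to a single branch inverse, now going in the direction $R(A) \leftarrow A$, with the inequality coming from the fact that $R$ restricted to $A$ may be non-injective so several points of $A$ can map to the same point of $R(A)$ (equality holding when $R|_A$ is injective).

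For item (4), I would argue locally. Fix a point $w_0 \notin P_R$; since $w_0$ is not a critical value, there is a neighbourhood $U$ of $w_0$ over which $R$ is an unbranched covering, so the branches $\zeta_1,\dots,\zeta_d$ are well-defined holomorphic functions on $U$. If $\phi$ is holomorphic on a neighbourhood of each preimage $\zeta_i(w_0)$ — which holds because none of those preimages lies in $P_R$ (as $P_R$ is forward invariant, $R(P_R)\subseteq P_R$, so a preimage of a non-postcritical point is non-postcritical) — then each summand $\phi(\zeta_i(w))(\zeta_i'(w))^2$ is a composition and product of holomorphic functions on $U$, hence holomorphic, and a finite sum of holomorphic functions is holomorphic. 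Therefore $R^*\phi$ is holomorphic on $U$, and since $w_0 \notin P_R$ was arbitrary, $R^*\phi$ is holomorphic outside $P_R$.

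The routine care needed is with measure-theoretic null sets: the branches $\zeta_i$ are only defined away from the critical values, which form a finite set and hence are Lebesgue-null, so all the identities in (1)-(3) are understood modulo null sets and the change of variables is justified on the complement of this finite set; I would remark this once. One should also check the integrability/absolute convergence needed to interchange sum and integral — but since the sum over branches is finite ($d$ terms) this is immediate, and Fubini/Tonelli is not really needed beyond additivity of the integral. Thus the only genuine content is the one change-of-variables formula, applied three times with different integrands, plus the forward-invariance of $P_R$ for item (4); I expect no real obstacle, the main point being to present the sheet-decomposition cleanly enough that the reader sees all four statements as facets of $R^*_{m,n}\circ R_{*\,m,n} = \deg(R)\,\mathrm{Id}$ paired against a test form.
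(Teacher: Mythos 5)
Your proposal is correct and is essentially the paper's own proof: items (1)--(3) by the change of variables $z=\zeta_i(w)$ along the branches (the paper merely fixes the branches globally by slitting $\bar{\C}$ along an arc through the critical values, where you argue sheet-by-sheet modulo the null set of critical values), and item (4) from $R^{-1}(\bar{\C}\setminus P_R)\subset\bar{\C}\setminus P_R$, i.e.\ forward invariance of $P_R$. The only slip is the parenthetical in (3): equality holds when $A$ contains all $\deg(R)$ preimages of almost every point of $R(A)$, not when $R|_A$ is injective (in which case one gets the factor $1/\deg(R)$); but this remark is not part of the claimed inequality and does not affect the proof.
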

\begin{proof} Part (1). Fix a system of branches of $R^{-1}$ in the 
following way: let $\tau$ be any differentiable arc containing all critical 
values of $R.$ Take $D=\overline{\C} \setminus \tau$ then by the monodromy theorem 
each of the branches $\zeta_i$ of $R^{-1}$ defines a holomorphic function on $D$. 
Set $D_i=\zeta_i(D)$, then $D_i\cap D_j=\emptyset$ for $i\neq j$, so we have
\begin{align*}\int_A\mu(z)R^*(\phi(z))|dz|^2
&=\int_{A\cap D}\mu(z)R^*(\phi(z))|dz|^2 \\
&=\int_{A\cap D}\sum_i \mu(z)\phi(\zeta_i(z))(\zeta'_i(z)^2) |dz|^2,
\end{align*} 
after a change of variables the latter is equal to 
$$\sum_i \int_{\zeta_i(A\cap 
D)}\mu(R(z))\frac{\overline{R}'(z)}{R'(z)}\phi(z)|dz|^2=\int_{R^{-1}(A)}B_R(\mu(z))\phi(z)|dz|^2.$$
Similar computations prove parts (2) and (3).
 
Part (4). Again by the monodromy theorem, the Ruelle operator 
does not depends on the local choice of branches of $R^{-1}.$  Outside the postcritical set, 
every branch of $R^{-1}$ is a local holomorphic function and 
$R^{-1}(\overline{\C}\setminus P_R)\subset \overline{\C}\setminus P_R$, therefore $R^*(\phi)$ 
is sum of holomorphic functions, and so is holomorphic. \end{proof}

Let $A$ be an $R$-invariant set, that is $R(A)\subset A$, then 
$R^*$ acts on $L_1(A)$ with $\|R^*\|\leq 1$. Indeed, by extending every element
in $L_1(A)$ by $0$ on $\C\setminus A$, the space $L_1(A)$ can be regarded as a closed 
subspace of $L_1(\C)$. By definition, the support  $supp(R^* f)$ is contained in 
$R(supp(f)).$ If $A$ is completely $R$-invariant (i.e. $R^{-1}(A)=A$), then $B_{R}$ acts
on $L_\infty(A)$ with $\|B_R\|=1$ and  $B_R$ is dual to $R^*.$ It is known that 
if $f$ is a quasiconformal automorphism of $\overline{\C}$ then $R_f=f\circ R \circ f^{-1}$ 
is rational if and only if its Beltrami coefficient $\mu_f(z)=\frac{\overline{\partial}f}{\partial f}(z)$ 
satisfies $\mu_f(R(z))\frac{\overline{R'(z)}}{R'(z)}=\mu_f(z)$ almost everywhere. 
Hence by the measurable Riemann mapping theorem the unit ball of the space 
of fixed points of  $B_R$ in $L_\infty(\C)$ determines all quasiconformal deformations of the 
rational map $R.$

When  $A$ is backward $R$-invariant  (i.e. $R^{-1}(A)\subset A$) and $\phi$ 
is an integrable function on $A$, then $$\int_A |R^*(\phi)| \leq \int_A 
||R^*|(\phi)|\leq \int_A |R^*||\phi|= \int_{R^{-1}(A)}|\phi|\leq \int_A|\phi|.$$
Here, the integration is with respect to the Lebesgue measure. Therefore the Ruelle 
operator is a contraction in the $L_1$ norm.

As another entry of Sullivan's dictionary the reader may recognise the Ruelle 
operator as the relative Poincar\'e Theta operator for branched coverings of 
the sphere onto itself. 

\section{The Thurston operator}\label{sec.Thurs}

Following ideas from Teichm\"uller theory we consider the action of the Ruelle operator on spaces of functions that are holomorphic on a given open set.  

Let $K$ be a closed subset of $\overline{\C}$ and   let  
$H(K)$ be the subspace of $L_1(\overline{\C})$ of all functions holomorphic 
outside of $K$ with the restricted norm of $L_1(\overline{\C})$. If $K$ is  $R$-invariant and 
contains the postcritical set $P_R$ then the Ruelle 
operator $R^*$ is a contractive endomorphism of $H(K).$   Define
$S_K=\C\setminus K$, and let $A(S_K)$ be the space of all 
integrable holomorphic functions on $S_K$ equipped with the $L_1$ norm, so $A(S_K)$ is 
a Banach space.  By the discussion on the previous section the 
Ruelle operator also acts as a contracting endomorphism of $A(S_K)$.
Every element $f$ in $A(S_K)$ extends to an element in $H(K)$, just set 
$f(z)=0$ for all $z$ in $K$. This extension gives a canonical inclusion from 
$A(S_K)$ into $H(K)$, which is an isomorphism precisely when the Lebesgue 
measure of $K$ is $0.$ 

Let $B(S_K)$ denote the associated Bergman space, that is, the space of all 
holomorphic functions $\phi$ on $S_K$ with the following $L_\infty$-norm 
$$
\|\phi\|=\sup_{z\in 
S_K}|\lambda_K^{-2}(z)\phi(z)|
$$  where $\lambda_K$ denotes the complete hyperbolic metric on $S_K.$ By the 
Bers' isomorphism theorem (see Theorem 2.1 in Chapter 3, page 89 of \cite{Krabook}), the space 
$B(S_K)$ is linearly isomorphic to $A^*(S_K)$, the dual of the Banach space $A(S_K)$, by 
the correspondence that associates to every $\phi\in B(S_K)$ the continuous functional 
$l_\phi(\psi)=\int_{S_K}\lambda^{-2}\overline{\phi}\psi|dz|^2$ in $A^*(S_K)$. 
Furthermore, there is an equivalent norm on $B(S_K)$, called the \textit{Teich\-m\"uller 
norm}, which is the canonical supremum norm of continuous linear 
functionals on the unit sphere in $A(S_K)$.

Let $T:B(S_K)\rightarrow B(S_K)$ be the dual operator of $R^*$ up to the 
identification above. Then  $T$ is a power-bounded operator which is a 
contraction in the Teichm\"uller norm. We call the operator $T$ the 
\textit{infinitesimal Thurston pull-back operator}, or \textit{Thurston 
operator} for short.  Indeed, as it was shown by A. Douady and J. H. Hubbard in \cite{DHTop}, 
when $R$ is a postcritically finite rational map the operator 
$T$ is  the derivative of the Thurston pull-back map.

Let $B_0(S_K)$ be the subspace  of all elements in $B(S_K)$ vanishing at 
infinity. In other words $B_0(S_K)$ is the space of all $\phi$ in $B(S_K)$ 
such that $|\lambda_K^{-2} \phi(z_j)|$ converges to zero, whenever $z_j$ is a 
sequence converging to the boundary $\partial S_K.$ 

Let $A_*(S_K)$ be the subspace of $A^*(S_K)$ such that the dual space 
$(A_*(S_K))^*$ is isometrically isomorphic to $A(S_K)$ (see for instance 
Theorem 5 on page 52 of \cite{GardLakic}). The space $A_*(S_K)$ is constructed 
as follows. A sequence $\{\phi_j\}$ in $A(S_K)$ is \textit{degenerating} if 
$\|\phi_j\|=1$ for all $j$ and $\phi_j$ converges pointwise to $0$ on $S_K$. 
Then $A_*(S_K)$ is the kernel of the seminorm on $A^*(S_K)$ given by
$$\beta(l)=\sup (\limsup_i 
|l(\phi_i)|),$$
where the supremum is taken over all degenerating sequences $\{\phi_i\}$ in 
$A(S_K).$ 

Moreover, the $*$-weak 
topology on $A(S_K)$ induced by $A_*(S_K)$ coincides with the topology of 
pointwise convergence of bounded sequences.

The Bers' isomorphism theorem together with Theorem 1 of \cite{BonetWolf}, 
provides a correspondence between the topologies and the Banach structure of the spaces we 
are dealing with as follows:

\begin{equation*}
\begin{split}
B(S_K)\simeq& B_0(S_K)^{**} \overset{f^{**}}\longleftrightarrow 
A^*(S_K)\overset{g^{**}}{\longrightarrow} \ell_\infty=c_0^{**},\\
&B_0(S_K)^{*}\overset{f^*}{\longleftrightarrow} 
A(S_K)\overset{g^*}{\longleftarrow}\ell_1,\\
&B_0(S_K)\overset{f}{\longleftrightarrow} 
A_{*}(S_K)\overset{g}{\longrightarrow} 
c_0.
\end{split}
\end{equation*}

In the notation above,  $f$ is the restriction map of the Bers' isomorphism to $A_*(S_K)$ 
and, by Lemma 1 and Corollary 1 of pages  259-260  in \cite{GardLakic}, it
is a surjective map onto $B_0(S_K)$. There is an isomorphism $h$ to its image 
from $B_0(S_K)$ into $c_0$ given by Theorem 1 in \cite{BonetWolf}. The 
map $g$ is just the composition $h\circ f$. Here $\ell_\infty$, $\ell_1$ and 
$c_0$ denote the spaces of complex valued sequences that are bounded, 
absolutely summable and converging to $0$, respectively. In this work, we 
often identify the space $A(S_K)$ with $B^*_0(S_K)$ using  Bers' isomorphism, 
so the action $R^*:B^*_0(S_K)\rightarrow B^*_0(S_K)$ is well defined.

Now we collect some facts about the geometry and dynamics of operators on  
$B(S_K)$ and $B_0(S_K)$. First we need some definitions. 
A Banach space $B$ is a \textit{Grothendieck space} if every $*$-weak 
convergent sequence of continuous functionals $\{l_i\}$ also converges 
in the weak topology on the Banach space $B^{*}$ dual to $B$.

Every complemented closed subspace of a Grothendieck space is again a 
Grothen\-dieck space. Clearly, every reflexive space is a Grothen\-dieck space. 

A Banach space $B$ has the \textit{Dunford-Pettis property} if every weakly 
compact operator from $B$ into any Banach space maps weakly compact sets into 
norm compact sets. As in the case of Grothendieck spaces the Dunford-Pettis 
property is also inherited on complemented closed subspaces. 
A typical example of a Grothendieck space with the Dunford-Pettis property is 
$L_\infty(X,\mu)$ where $(X,\mu)$ is a positive measure space (see \cite{Lotz}).

A series $\sum x_n$ in a Banach space $X$ is called \textit{weakly unconditionally 
Cauchy (wuC)} if for every $l\in X^*$ the series $\sum|l(x_n)|$ is bounded. A 
Banach space $X$ is said to have the \textit{property (V) of Pe\l{}czy\'{n}ski} 
if 
every subset $K\subset X^*$ is relatively weakly compact whenever $K$ 
satisfies  $$\lim_n \sup_{l\in K}|l(x_n)|=0$$ for
every wuC series $\sum x_n$ in $X$. 

By results of functional analysis (see, for example, Corollary 3.7, page 132 in 
\cite{HarmandWWerner}) any closed subspace $Y\subset c_0$ has the property (V) 
of Pe\l{}czy\'{n}ski.

Hence we have the following 

\begin{itemize}
 \item $B_0(S_K)$ has the property (V) of  Pe\l{}czy\'{n}sky.
 \item $B(S_K)$ is a Grothendieck space with the Dunford-Pettis property.
\end{itemize}

Indeed, by results of J. Bonet and E. Wolf  in \cite{BonetWolf}, the space $B_0(S_K)$
is isomorphic to a closed subspace of $c_0.$ 

By the Bers' isomorphism theorem,  we 
have $L_\infty(S_K)\cong N\oplus B(S_K).$ So $B(S_K)$ is a complemented
subspace of a Grothendieck space with the Dunford-Pettis property. The space 
$N=A(S_K)^\perp\subset L_\infty(S_K)$ is the annhilator of $A(S_K)$. In 
other words, $N$ consists of the trivial Beltrami differentials.  

Again, by a combination of classical results in functional analysis we have the 
following fact:
\begin{itemize}
\item If $E:B_0(S_K)\rightarrow B_0(S_K)$ is a linear operator then either $E$ 
is compact or there exists an infinite dimensional subspace $Y$, which is 
isomorphic to $c_0$ such that $E|_Y$ is an isomorphism onto its image. 
\end{itemize}
Indeed, as noted above $B_0(S_K)$ has the property (V) of Pe\l{}czy\'{n}ski, 
then by the Lemma 3.3.A and the Theorem 3.3.B on page 128 of 
\cite{HarmandWWerner}, either $E$ is weakly compact or there exists an infinite 
dimensional subspace $Y$, which is isomorphic to $c_0$ such that $E|_Y$ is an 
isomorphism onto its image. However,  if $E$ is weakly compact then 
$E^*:B^*_0(S_K)\rightarrow B_0^*(S_K)$ is also weakly compact. Let us show 
that every bounded weakly convergent sequence $\{\psi_n\} \subset A(S_K)$ 
contains a norm convergent subsequence. In fact, every bounded sequence in 
$A(S_K)$ forms a normal family. Let $\{\psi_{n_k}\}$ be a subsequence of 
$\{\psi_n\}$ converging to its pointwise limit $\psi.$ Hence, $\psi_{n_k}$ is 
weakly convergent and by the Fatou lemma $\psi\in A(S_K).$ Then by the uniform 
integrable convergence in measure theorem (see Theorem 1.5.13 in 
\cite{Taomeasure}), $\psi_{n_k}$ converges to $\psi$ in norm. Therefore 
$E^*$ and, hence also $E$, are compact operators.

We say that an invariant Beltrami differential $\mu\in L_\infty(\bar{\C})$ defines a 
\textit{non-trivial quasiconformal deformation} if and only if 
$l_\mu(\psi)=\int_{S_K} \psi(z) \mu(z) |dz|^2$ is a non-zero 
 functional on $A(S_K)$, note that $l_\mu$ is $R^*$-invariant
 that is $l_\mu(R^*(\psi))=l_\mu(\psi)$ for all $\psi\in A(S_K)$. In other words, $l_\mu$ 
 induces a non-zero fixed point for $T$ on $B(S_K).$ 

Finally, in order to use results given by the second author in
\cite{MakRuelle} without cumbersome recalculations, throughout the paper we will 
assume that $\{0,1,\infty\}$ are fixed points of $R$.  This is always the case 
after passing to a suitable iteration of $R$ and conjugating with a M\"obius 
map. However most of our results do not need this normalization.
We fix $K_R= P_R\cup \{0,1\}$ and set $S_R=\C\setminus K_R$ and  always assume
that $S_R$ is  non-empty.

\section{Mean ergodicity in holomorphic dynamics}\label{section4}

Given an operator $S$ on a Banach space $X$,  the 
$n$-\textit{Ces\`aro averages} of $S$ are the operators $A_n(S)$ defined for 
$x\in 
X$ 
by $$A_n(S)(x)=\frac{1}{n}\sum_{i=0}^{n-1}S^i(x).$$

An operator $S$  on a Banach space $X$ is called  \textit{mean-ergodic} if $S$ 
is power-bounded, that is, it satisfies $\|S^n\|\leq M$ for some number $M$ 
independent of $n$, and the Ces\`aro averages $A_n(S)(x)$ converges in  norm for 
every $x\in X$.

The topology of convergence in norm is also called the strong topology on $X.$
If $A_n(S)$ converges uniformly on the closed unit ball on $X$ then the 
operator $S$ is called \textit{uniformly ergodic}. The following facts can be found, for example, 
in Krengel's book \cite{Krengel}.

\begin{enumerate}
 \item (\textbf{Separation principle)}. The operator $S$ is mean-ergodic if and only if $S$
satisfies the principle of separation of points: If $x^*$ is a fixed point of 
$S^*$ then there exists  $y\in X$ a fixed point of $S$ such that 
$\langle y,x^*\rangle \neq 0.$

\item  For a power bounded operator $S$, the equality $\lim_{n\rightarrow \infty} A_n(S)(x)=0$ holds if and only if 
$x\in  \overline{(Id-S)(X)}.$

\item (\textbf{Mean ergodicity lemma}). For a power bounded operator $S$, consider the convex hull $Conv(S,x)$ of 
the orbit of a point $x$ under $S$. Then $y$ is a weak accumulation point of  
$Conv(S,x)$ if and only if $y$ is a fixed point of $S.$ In this situation 
$A_n(S)(x)$ converges to $y$ in norm.

If $X$ is a dual space, then $y$ is a $*$-weak accumulation point of $Con(S,x)$ if and only if $y$ is a fixed point of $S.$ 
Here a dual space is a space isometrically isomorphic to $B^*$,
for some Banach space $B$ where the notion of $*$-weak topology is well 
defined. 

\item (\textbf{Uniform Ergodicity lemma}). A power-bounded operator $S$ acting 
on a Banach space $B$ is uniformly ergodic if and only if the subspace 
$(Id-S)B$ is  closed or, equivalently, if and only if the point $1$ 
either belongs to the resolvent set or it is an isolated eigenvalue of $S$. 
If $dim(Fix(S))$ is finite then the intersection of the spectrum of $S$ with 
the unit circle consists of finitely many isolated eigenvalues. 
\end{enumerate}

Let $Hol(R)$ be the space of all integrable, with 
respect to the planar Lebesgue measure, rational functions having poles in the 
forward orbit of the set  $V(R)\cup \{0,1\}$, where $V(R)$ is the set of all 
critical values of $R$. Equivalently, $Hol(R)$ consists of all rational functions with 
simple poles in the forward orbit of $V(R)\cup \{0,1\}$ and a zero at infinity 
of multiplicity at least three. Note that $Hol(R)$ is a normed vector space 
with the norm  inherited from  $L_1(\overline{\C})$. The  space  $Hol(R)$ is not 
complete and, by the Bers' approximation theorem (see Theorem 9 
of  \cite{GardLakic}), its completion is $H(K_R)$ and contains a canonical 
inclusion of $A(S_R)$. Moreover, the completion of $Hol(R)$ is equal to $A(S_R)$ 
if and only if the Lebesgue measure of $P_R$ is zero.

We recall some facts from ergodic theory which will be used in this 
work. A positive measure set $M\subset \overline{\C}$ is called \textit{wandering} when 
the sets  $\{R^{-k}(M)\}_k$ are pairwise almost disjoint Lebesgue 
measurable sets, that is $R^{-i}(M)\cap R^{-j}(M)$ has 
Lebesgue measure zero whenever $i\neq j$. Let $D(R)$ be the union of all 
wandering sets. The set $D(R)$ is called the \textit{dissipative 
set} and the complement $C(R)=\C\setminus D(R)$ is  the \textit{conservative set}. 
Similarly, a positive measure set $M\subset \overline{\C}$ is called \textit{weakly 
wandering} when there is a sequence $0=k_0<k_1<k_2<...$ such that the sets  
$R^{-k_i}(M)$ are pairwise almost disjoint Lebesgue measurable sets. The \textit{weakly dissipative} set $W(R)$ is 
the union of all weakly wandering sets and $SC(R)=\C\setminus W(R)$ is 
called the \textit{strongly conservative set}. 

The following proposition is a consequence of Theorem 4.6 and 
Theorem 4.11,  pages 141 and 144  in \cite{Krengel}, respectively.
 
\begin{proposition}\label{prop.Krengel} Let $R$ be a rational map. Then
the Lebesgue measure of the symmetric difference $R(SC(R))\triangle SC(R))$ is zero.  
Furthermore, if $SC(R)$ has positive measure there exists a non negative
integrable function $P$ which is positive on $SC(R)$ and  such that $P(z)|dz|^2$ 
defines an invariant probability measure supported on $SC(R).$ 
Moreover, if for a given non negative measurable function $\phi$ we have that 
$\phi(z) |dz|^2$ defines an invariant probability measure, then 
$supp(\phi)$ is contained in $SC(R).$
\end{proposition}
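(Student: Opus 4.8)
The plan is to realize $R$ as a nonsingular transformation of the measure space $(\bar{\C},m)$, with $m$ planar Lebesgue measure, and then read the three assertions off Theorem 4.6 of \cite{Krengel} applied to the transfer operator of this system, filling in the translation by hand. First I would verify that $R$ is nonsingular, i.e. $m(A)=0$ iff $m(R^{-1}(A))=0$: this is immediate since $R$ is real-analytic with finitely many critical points, hence a local diffeomorphism off a finite set, so it carries null sets to null sets, and the reverse implication uses surjectivity, $A=R(R^{-1}(A))$. Next I would identify $|R^*|$ with the Perron--Frobenius operator of $(\bar{\C},m,R)$: by Proposition \ref{prop.bounds}(2), for bounded $\psi$ and $\phi\in L_1$ one has $\int\psi\,|R^*|(\phi)\,dm=\int(\psi\circ R)\,\phi\,dm$, so $|R^*|$ is pre-adjoint to composition with $R$; it is a positive $L_1$-contraction and an isometry on the positive cone. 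Consequently a non-negative $\phi$ with $\|\phi\|_1=1$ defines an $R$-invariant probability measure exactly when $|R^*|(\phi)=\phi$.

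With this dictionary, Krengel's Theorem 4.6 applied to $|R^*|$ yields, in the present language, a measurable set $E$, unique modulo null sets, such that: $\bar{\C}\setminus E$ is, modulo null sets, a countable union of weakly wandering sets and contains every weakly wandering set modulo null sets; if $m(E)>0$ then $R|_E$ admits a finite invariant measure equivalent to $m|_E$; and every non-negative $\phi$ with $\phi\,dm$ invariant is supported in $E$. It then remains to identify $E$ with $SC(R)$. Since $\bar{\C}\setminus E$ is a countable union of weakly wandering sets, $\bar{\C}\setminus E\subseteq W(R)$, hence $SC(R)\subseteq E$ modulo null sets; conversely, every weakly wandering set lies in $\bar{\C}\setminus E$ modulo null sets and, $W(R)$ being itself mod null a countable union of weakly wandering sets, $W(R)\subseteq\bar{\C}\setminus E$, i.e. $E\subseteq SC(R)$ modulo null sets. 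Thus $E=SC(R)$ up to a null set. Assertion (2) is now the finite-invariant-measure clause, normalized to total mass $1$ (the density $P$ being positive a.e. on $SC(R)$ by equivalence with $m|_{SC(R)}$), and assertion (3) is the support clause.

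For assertion (1): if $m(SC(R))=0$ then $R(SC(R))$ is null and there is nothing to prove, so assume $m(SC(R))>0$ and let $\mu=P\,dm$ be the invariant probability of (2), so $\mu\sim m$ on $SC(R)$. Using $R^{-1}(R(SC(R)))\supseteq SC(R)$ and invariance, $\mu(R(SC(R)))=\mu(R^{-1}(R(SC(R))))\geq\mu(SC(R))=1$, hence $\mu$ vanishes off $R(SC(R))$ and $m(SC(R)\setminus R(SC(R)))=0$. For the reverse inclusion, observe that $R^{-1}$ carries weakly wandering sets to weakly wandering sets — if $R^{-k_j}(M)$ are pairwise disjoint then so are $R^{-(k_j+1)}(M)=R^{-k_j}(R^{-1}(M))$ — and that $R$ carries null sets to null sets. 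Writing $W(R)$ mod null as a countable union of weakly wandering sets, if $m(R(SC(R))\cap W(R))>0$ then some weakly wandering $M$ has $m(M\cap R(SC(R)))>0$; since $R\bigl(R^{-1}(M)\cap SC(R)\bigr)=M\cap R(SC(R))$ and $R$ preserves null sets, $R^{-1}(M)\cap SC(R)$ has positive measure, contradicting $R^{-1}(M)\subseteq W(R)$. Hence $m(R(SC(R))\setminus SC(R))=0$, and (1) follows.

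The only genuine obstacle is the bookkeeping in this translation, concentrated in two places. First, one must check that the preimage convention $\{R^{-k}(M)\}$ used in the definition of weakly wandering set is exactly the one under which Theorem 4.6 is stated for the $L_1$-contraction $|R^*|$, i.e. that ``infinitely many disjoint preimages'' is the correct recurrence obstruction for the non-invertible, $d$-to-$1$ map $R$; this is the point where the many-to-one nature of $R$ must be handled rather than glossed over. Second, one needs the standard but not entirely formal fact that the union of all weakly wandering sets coincides modulo null sets with a countable such union, which is what lets the (a priori uncountable) set $W(R)$ behave measurably in the arguments above; this can either be absorbed into the statement of Theorem 4.6 or proved by a routine exhaustion. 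Everything else is a direct transcription.
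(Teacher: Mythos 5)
Your proposal is correct and follows the same route as the paper, which offers no argument beyond declaring the proposition ``a reformulation of Theorem 4.6, page 141, in \cite{Krengel}''; you simply carry out the translation (nonsingularity of $R$, identification of $|R^*|$ with the Perron--Frobenius operator via Proposition~\ref{prop.bounds}(2), matching Krengel's set $E$ with $SC(R)$, and the elementary deduction of $m(R(SC(R))\triangle SC(R))=0$) that the authors leave implicit. The two bookkeeping points you flag at the end are exactly the content of that reformulation, and your handling of them is sound.
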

\begin{proof}
 Since the weakly dissipative set is backward $R$-invariant then  $R(SC(R))=SC(R)$ 
 almost everywhere. Hence the restriction $R:SC(R)\rightarrow SC(R)$ is a 
 null-preserving transformation. If $SC(R)$ has positive measure, then by Theorem 4.11, 
 page 144 in \cite{Krengel}, there exists a finite $R$-invariant measure $\nu$ on 
 $SC(R)$ which is equivalent to the Lebesgue measure. Let $P$ be the Radon-Nikodym 
 derivative of $\nu$ with respect to the Lebesgue measure, so $P(z)>0$ almost everywhere on $SC(R)$.  
 Also every finite invariant measure absolutely continuous with respect to Lebesgue is 
 $0$ on the weakly dissipative set. By Theorem 4.6,  page 141 in \cite{Krengel}, 
 the decomposition $\C=SC(R)\cup W(R)$ is unique up to measure. If $\phi$ is a 
 non-negative function so that $\phi |dz|^2$ defines a finite invariant measure then 
 $supp(\phi)$ is invariant and again by Theorem 4.11 of 
 \cite{Krengel} we have $supp(\phi)\subset SC(R).$
\end{proof}
Due to the classification of Fatou  components and Proposition 
\ref{prop.Krengel},  it follows  that both the conservative set $C(R)$
and the strongly conservative set $SC(R)$ intersect the Fatou set $F_R$ 
precisely at the union of all rotation domains cycles. Indeed, if a periodic 
Fatou component $D$ is not a rotation domain then $D$ consists of a union of 
wandering sets. 

Now assume $D$ is a rotation domain. Let  $\tau_\theta$ be an 
irrational rotation on the unit disk $\triangle$, then for any annulus 
$T_r=\{r\leq |z|\leq 1\}$, the function given by $\displaystyle{P(z)=\frac{1}{|z|^2 mod(T_r)}}$ 
for $z\in T_r$ and $0$ for $z$ in $\triangle\setminus T_r$ defines a $\tau_\theta$-invariant 
probability measure. By using a parametrization of   $D$ and 
Proposition \ref{prop.Krengel}, we have that rotation domain cycles are 
conservative and strongly conservative. Moreover, by Theorem 1.6,  page 117 in
\cite{Krengel}, we have the following fact.

\begin{lemma}\label{lm.con.dis} [Almost everywhere convergence on the 
dissipative set] For every $f\geq 0$ in $L_1(\overline{\C})$, the series 
$$\sum_{n=0}^\infty |R^*|^n(f)$$ converges and is finite almost 
everywhere on the dissipative set.
\end{lemma}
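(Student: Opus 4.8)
The plan is to recognize $|R^*|$ as the Perron--Frobenius (transfer) operator of the nonsingular map $R$ on $L_1(\bar{\C})$ and then to invoke the Hopf decomposition, that is, Theorem~1.6 in \cite{Krengel}. First I would record that $|R^*|$ is a positive linear operator on $L_1(\bar{\C})$ (it sends nonnegative functions to nonnegative functions, being the sum of the terms $\phi(\zeta_i)|\zeta_i'|^2$), and that part~(2) of Proposition~\ref{prop.bounds}, applied with $\mu\equiv 1$ and $A=\bar{\C}$, gives $\int_{\bar{\C}}|R^*|(\phi)\,|dz|^2=\int_{\bar{\C}}\phi\,|dz|^2$ for every $\phi\ge 0$; thus $|R^*|$ preserves mass on the positive cone and $\||R^*|\|=1$. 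Since a rational map is locally biholomorphic off its finite critical set, $R$ is nonsingular for $m$, so $|R^*|$ is a genuine transfer operator. Theorem~1.6 in \cite{Krengel} then furnishes a set $\mathcal D\subseteq\bar{\C}$, unique mod $m$, with complement $\mathcal C=\bar{\C}\setminus\mathcal D$, such that for every $f\ge 0$ in $L_1(\bar{\C})$ the series $\sum_{n\ge 0}|R^*|^n(f)$ is finite $m$-a.e. on $\mathcal D$, while on $\mathcal C$ one has the dichotomy $\sum_{n\ge 0}|R^*|^n(f)\in\{0,\infty\}$ $m$-a.e.

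Next I would identify $\mathcal D$ with the dissipative set $D(R)$ mod $m$; for the lemma only the inclusion $D(R)\subseteq\mathcal D$ is needed. Let $W$ be a wandering set, so that the sets $R^{-k}(W)$, $k\ge 0$, are pairwise disjoint. Iterating part~(2) of Proposition~\ref{prop.bounds} and taking $\mu=\mathbf 1_W$, $\phi=\mathbf 1_W$, $A=W$ yields $\int_{W}|R^*|^n(\mathbf 1_W)\,|dz|^2=m\big(R^{-n}(W)\cap W\big)$, which vanishes for every $n\ge 1$ because $R^{-n}(W)$ and $R^{-0}(W)=W$ are disjoint; hence $\int_{W}\sum_{n\ge 0}|R^*|^n(\mathbf 1_W)\,|dz|^2=m(W)<\infty$ and so $\sum_{n\ge 0}|R^*|^n(\mathbf 1_W)<\infty$ $m$-a.e. on $W$. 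Since $\mathbf 1_W$ is the $n=0$ term of this series, the Hopf dichotomy on $\mathcal C$ forces $m(W\cap\mathcal C)=0$, i.e. $W\subseteq\mathcal D$ mod $m$. As $m$ is finite on $\bar{\C}$, the set $D(R)$ agrees mod $m$ with a countable union of wandering sets, whence $D(R)\subseteq\mathcal D$ mod $m$. Applying the conclusion of the first paragraph to the (possibly larger) set $\mathcal D$ then gives the lemma: $\sum_{n\ge 0}|R^*|^n(f)<\infty$ $m$-a.e. on $D(R)$ for every $f\ge 0$ in $L_1(\bar{\C})$.

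The step I expect to be the main obstacle is exactly this matching of the two decompositions: Theorem~1.6 of \cite{Krengel} produces $\mathcal D$ intrinsically from the operator $|R^*|$, whereas $D(R)$ is defined dynamically through wandering sets of the point map, so one must make sure they coincide mod $m$. The computation above supplies the half that the lemma needs; the reverse inclusion $\mathcal D\subseteq D(R)$ is the classical fact that the conservative part of a transfer operator carries no positive-measure wandering set, and the only genuinely delicate point there is reconciling the ``union of all wandering sets'' in the definition of $D(R)$ with a countable subunion, so that exceptional null sets do not accumulate — this is what uses finiteness of $m$ together with a maximal-wandering-set argument. Everything else (positivity, the contraction estimate, nonsingularity of $R$) is routine and already contained in Section~\ref{sec.Thurs} and Proposition~\ref{prop.bounds}.
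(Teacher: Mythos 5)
Your proof is correct, but it takes a heavier route than the paper. The paper's proof consists of a single estimate: for a wandering set $W$, iterating part (2) of Proposition \ref{prop.bounds} with $\mu\equiv 1$ and $A=W$ gives $\int_W |R^*|^n(f)=\int_{R^{-n}(W)}f$, and disjointness of the preimages yields $\int_W\sum_n|R^*|^n(f)=\int_{\cup_n R^{-n}(W)}f\le\|f\|_1$ directly for the \emph{given} $f\ge 0$, so the series is integrable, hence finite a.e., on every wandering set --- no appeal to the Hopf dichotomy is needed. You run the very same computation but only for $\phi=\chi_W$, and then invoke Theorem 1.6 of \cite{Krengel} (the Hopf decomposition and its $\{0,\infty\}$ dichotomy on the conservative part) to transfer the conclusion to arbitrary $f$; since your $\chi_W$ computation works verbatim for any $f\ge 0$, the Hopf machinery is a detour, though a legitimate one, and it does buy you slightly more than the lemma asserts (the identification of $D(R)$ with the operator-theoretic dissipative part mod $m$, and the dichotomy on $C(R)$). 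Both arguments share the same genuinely delicate step, which the paper leaves implicit and you rightly flag: replacing the (a priori uncountable) union of all wandering sets by a countable subunion mod $m$, which uses finiteness of the Lebesgue measure via an exhaustion argument.
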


\begin{proof}
It is enough to show that the series  $$\sum_{n=0}^\infty |R^*|^n(f)$$ 
converges on every wandering set $W$ of finite positive measure. In fact, we 
show that the series defines an integrable function. Since $W$ is wandering 
then by Proposition \ref{prop.bounds}, we have
$$\int_W \sum_{n=0}^\infty 
|R^*|^n(f)\leq \sum_{n=0}^\infty \int_{R^{-n}(W)} f=\int_{\bigcup 
R^{-n}(W)}f<\|f\|_1.$$\end{proof}

Using Proposition \ref{prop.Krengel}, we reformulate results of M. Lyubich and 
C. McMullen to obtain the following lemma.

\begin{lemma}\label{lemma.dichotomy}
Let $R$ be a rational map. Then
\begin{itemize}
\item Either $C(R)\cap J_R \subset P_R$ or $C(R)=\overline{\C}$.
\item Either $SC(R)\cap J_R\subset P_R$ or $SC(R)=\overline{\C}$.
\item In the case where $C(R)=\overline{\C}$ but $P_R\neq \overline{\C}$, then there exists a fixed 
point of the Beltrami operator supported on the Julia set if and only if  $R$ is a 
flexible Latt\`es map.
\end{itemize}

\end{lemma}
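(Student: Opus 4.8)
The plan is to deduce the three dichotomies from Proposition \ref{prop.Krengel} together with the structure theory of Fatou components and the known characterization (McMullen–Lyubich) of when the conservative set can be large. The first two bullets have the same shape, so I would treat them in parallel, doing the $SC(R)$ case in detail and pointing out that the $C(R)$ case is identical with $D(R)$ in place of $W(R)$.

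First I would observe that $SC(R)$ is (up to a null set) completely invariant: by Proposition \ref{prop.Krengel} we have $m(R(SC(R))\triangle SC(R))=0$, and since $R$ is finite-to-one and nonsingular with respect to Lebesgue measure, $R^{-1}(SC(R))$ and $SC(R)$ also agree up to a null set. Next I would invoke Proposition \ref{prop.Krengel} once more: if $m(SC(R))>0$ then $SC(R)$ carries an invariant probability density $P$ that is strictly positive on $SC(R)$. Now intersect with the Julia set. If $m(SC(R)\cap J_R)>0$, then the restriction of $P\,dz\wedge d\bar z$ to $J_R$ (after renormalizing) is still an invariant measure absolutely continuous with respect to Lebesgue measure and of positive total mass. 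Here I would cite the theorem of Lyubich and McMullen on the ergodic properties of rational maps on their Julia sets: unless $J_R=\bar{\C}$ and the map is exceptional in a way that forces $C(R)=\bar{\C}$, the Julia set carries no invariant density, i.e. Lebesgue-a.e.\ point of $J_R$ lies in the dissipative (resp.\ weakly dissipative) set; the only way to have a positive-measure conservative piece of $J_R$ that is not swallowed by $P_R$ is $SC(R)=\bar{\C}$. This is exactly the clean dichotomy: either the conservative part of the Julia set is negligible off the postcritical set, so $SC(R)\cap J_R\subset P_R$ (up to measure zero, which is how the inclusion should be read), or conservativity is total, $SC(R)=\bar{\C}$. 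The Fatou-component discussion already in the text (every non-rotation periodic Fatou component is a countable union of wandering sets, rotation domains are strongly conservative) is what guarantees the Fatou part of $SC(R)$ contributes nothing new to this alternative, so the dichotomy is genuinely about $J_R$.

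For the third bullet, assume $C(R)\neq P_R$ and $C(R)=\bar{\C}$. The hypothesis $C(R)=\bar{\C}$ means $R$ is conservative: there are no wandering sets of positive measure, so every positive-measure set returns. One direction is classical and requires no work: a flexible Lattès map admits an invariant line field on its Julia set (which is the whole sphere), giving an invariant Beltrami differential supported on $J_R$. For the converse I would argue that if $R$ is not a flexible Lattès map, then the presence of an invariant Beltrami differential $\mu$ supported on $J_R$ forces, via the ergodicity of $R$ on the conservative set, that the line field $\mu/|\mu|$ is defined almost everywhere on $\bar{\C}$ and is $R$-invariant; this is precisely the situation ruled out by the McMullen rigidity theorem (no invariant line field unless $R$ is flexible Lattès), so no such $\mu$ exists. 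The condition $C(R)\neq P_R$ is used to exclude the degenerate case $m(P_R)$ having full measure, where the statement would be vacuous or the structure different; I would mention that when $C(R)=\bar{\C}$ and $P_R$ is all of $\bar{\C}$ the hypothesis is simply not met.

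The main obstacle I expect is making precise the passage from "an invariant density on a positive-measure subset of $J_R$" to "an invariant density on all of $J_R$ (or on $\bar{\C}$)", and then citing the Lyubich–McMullen ergodicity/rigidity results in exactly the right form. The subtlety is that these theorems are usually stated under hypotheses like "$J_R$ has positive measure" or "$R$ is not postcritically finite of a special type", and one must check that the alternative $SC(R)\cap J_R\not\subset P_R$ does put us in the hypotheses of the relevant theorem and does force the conclusion $SC(R)=\bar{\C}$ rather than merely $m(SC(R)\cap J_R)>0$. I would handle this by using that $SC(R)$ is completely invariant mod null sets and that $R|_{J_R}$ is ergodic with respect to any invariant measure in the measure class of Lebesgue on $J_R$, so positivity of measure propagates to full measure on $J_R$; and if additionally $m(J_R)>0$ forces, by the cited results, $J_R=\bar{\C}$, one gets $SC(R)=\bar{\C}$. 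All remaining steps are bookkeeping with the already-established Proposition \ref{prop.Krengel} and Lemma \ref{lm.con.dis}.
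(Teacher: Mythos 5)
Your treatment of the first two bullets follows the paper's route: Lyubich's ergodic dichotomy (for a.e.\ point of $J_R$ the orbit accumulates only on $P_R$, or else $J_R=\bar{\C}$ with conservative and ergodic action), combined with Proposition \ref{prop.Krengel} and a recurrence argument to promote a positive-measure piece of $SC(R)\setminus P_R$ to $SC(R)=\bar{\C}$. The paper does this last step via Poincar\'e recurrence and a Koebe distortion argument where you invoke ergodicity of the Lebesgue measure class; these are the same mechanism, and your version is fine provided you remember that the ergodicity you are using is itself part of Lyubich's theorem and only available in the second horn of the dichotomy.

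For the third bullet there is one point you must repair. The parenthetical you lean on --- ``no invariant line field unless $R$ is flexible Latt\'es'' --- is not a theorem: stated in that generality it is precisely the open no-invariant-line-fields conjecture, i.e.\ the subject of this entire paper, so citing it here would prove far too much. What is actually available is McMullen's Theorem 3.17, which establishes this rigidity under exactly the hypotheses of the bullet: the action is conservative on all of $\bar{\C}$ and $C(R)\neq P_R$ (equivalently $P_R\neq\bar{\C}$), the latter being what allows univalent inverse branches landing away from the postcritical set and hence the distortion/almost-continuity argument on the line field. Your sketch does place itself in that situation --- you spread the line field over the sphere by ergodicity and you note $P_R\neq\bar{\C}$ --- so the argument is salvageable, but you should cite McMullen's Theorem 3.17 (as the paper does) rather than a blanket rigidity statement, and say explicitly that the hypothesis $C(R)\neq P_R$ is what puts you inside the hypotheses of that theorem rather than merely excluding a ``vacuous'' case.
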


\begin{proof}
We follow ideas and arguments of Lyubich  and McMullen  (see \cite{LyuTypical} and  \S 
3.3 in  \cite{Mc1}).  First by Poincar\'e recurrence theorem for conservative 
actions (see \cite{AaronsonBook}) we have  $$\liminf_{n\rightarrow \infty} d(x,R^n(x))=0$$ for 
almost every $x\in C(R)$ where $d$ is the spherical metric.
Assume that the set $B=(C(R)\cap J_R)\setminus P_R$ has positive Lebesgue 
measure. Then $R(B)\subset B$, otherwise  if $B$ is not invariant then the set 
$X=B\cap R^{-1}(P_R)$ has positive Lebesgue measure. Then $R^n(X)\cap X$ has zero Lebesgue 
measure for $n\neq 1$ since $R^{n}(X)\subset P_R$ for all $n\geq 1$. This implies that the sets 
$\{R^{-k}(X)\}$ are pairwise almost disjoint, thus  $X$ is wandering, which contradicts that 
$B\subset C(R)$.  Then again by Poincar\'e recurrent theorem we have
$$\limsup_{n\rightarrow\infty} d(R^n(x),P_R)\geq d(x,P_R)>0$$ for almost every point in 
$B.$ Now using Koebe distortion 
arguments as in section 1.19 of \cite{L} or  \S 3.3 in \cite{Mc1}, we obtain that the 
closure  of any invariant positive Lebesgue measure subset $E$ in $B$ contains a disk 
and thus $\overline{E}=\overline{B}=\overline{\C}.$ 

In particular, this holds for $E=(SC(R)\cap J_R)\setminus P_R\subset B$ whenever $E$ has positive 
measure since, by Proposition \ref{prop.Krengel}, the set $E$ is invariant.

For the third part, by the hypothesis and Poincar\'e's theorem for almost every point $x\in C(R)\setminus P_R$ we have 
that $\limsup d(R^n(x),P_R)>0.$ Hence, if $J_R$ supports a non-zero invariant Beltrami differential then  
Theorem 3.17 in McMullen's book \cite{Mc1} finishes the proof.
\end{proof}

We also use the following proposition.

\begin{proposition}\label{prop.invariantmeasure} Let $R$ be a rational map and 
let $f \in L_1(\overline{\C})$ be a fixed point of the Ruelle operator $R^*$. Then 
there exists a fixed point of the Beltrami operator $\mu\in L_\infty(\overline{\C})$
such that $\int_\C f(z) \mu(z) |dz|^2 \neq 0$. In fact 
$|R^*||f|=|f|$ and $|f|$ defines an absolutely continuous finite invariant measure that 
satisfies $\frac{|f|}{f}=\mu$ almost everywhere on the support of $f.$
\end{proposition}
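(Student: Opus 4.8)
The plan is to extract the invariant Beltrami differential $\mu$ directly from the polar decomposition of $f$, and to prove the measure-theoretic statement by a convexity/averaging argument. Write $f = |f| \cdot u$ where $u = f/|f|$ is a measurable unimodular function on the support of $f$; the candidate is $\mu = \bar u = |f|/f$ on $\mathrm{supp}(f)$, extended by $0$ elsewhere. First I would verify that $\int_\C f(z)\mu(z)\,|dz|^2 = \int_\C |f(z)|\,|dz|^2 = \|f\|_1 > 0$ (assuming $f \neq 0$; the statement is vacuous otherwise), so that the pairing is nonzero as soon as $\mu$ is shown to be a fixed point of $B_R$. Since $\mu \in L_\infty$ with $\|\mu\|_\infty = 1$ on $\mathrm{supp}(f)$, the only real content is the invariance identity $B_R(\mu) = \mu$.

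To get invariance I would argue through the modulus. By Proposition~\ref{prop.bounds}(1), for any Lebesgue measurable set $A$,
\[
\int_A |f(z)|\,|dz|^2 = \int_A \overline{f(z)}\,\mu(z)\,|dz|^2 = \int_A R^*(f)(z)\,\mu(z)\,|dz|^2 = \int_{R^{-1}(A)} B_R(\mu)(z)\,f(z)\,|dz|^2,
\]
using $R^*(f) = f$. On the other hand, the chain of inequalities in the excerpt (the contraction estimate for $R^*$, valid on backward-invariant sets, applied here to $A$ arbitrary together with $R^{-1}(A)$) gives
\[
\int_A |f| = \left|\int_A R^*(f)\right| \le \int_A |R^*(f)| \le \int_A |R^*||f| = \int_{R^{-1}(A)} |f|.
\]
Taking $A = \bar{\C}$ forces equality throughout; since the individual inequalities $|R^*(f)| \le |R^*||f|$ hold pointwise a.e., equality of the integrals forces $|R^*(f)| = |R^*||f|$ a.e., i.e. $|f| = |R^*||f|$ a.e. That is exactly the claim that $|f|\,dz\wedge d\bar z$ is $|R^*|$-invariant, hence a finite absolutely continuous invariant measure. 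Moreover the first display now reads $\int_A |f| = \int_{R^{-1}(A)} B_R(\mu)\, f$ for all $A$, while $|R^*|$-invariance rewritten via Proposition~\ref{prop.bounds}(2) gives $\int_A |f| = \int_{R^{-1}(A)} \mu(R(z))\,\overline{f(z)}\cdot(u\cdot\text{?})$ — more cleanly, I would instead compare $\int_{R^{-1}(A)} B_R(\mu)\,f$ with $\int_{R^{-1}(A)} \mu\, f$ directly: both equal $\int_A |f|$ (the second because $\mu f = |f|$ on $\mathrm{supp} f$ and by $|R^*|$-invariance of $|f|$ one checks $\int_{R^{-1}(A)}|f| = \int_A |f|$). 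Since this holds for every $A$ and $f$ is the density of a measure equivalent to $|f|\,dm$ on its support, we conclude $B_R(\mu) = \mu$ a.e. on $\mathrm{supp}(f)$; off the support we need $B_R(\mu)$ to vanish there too, which follows because $R^{-1}(\mathrm{supp} f) \subseteq \mathrm{supp} f$ up to null sets (a consequence of $|R^*||f| = |f|$, since $|R^*|(|f|)$ is supported where $|f|$ pulls back).

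The step I expect to be the genuine obstacle is passing from the equality of integrals $\int_{R^{-1}(A)} B_R(\mu)\,f = \int_{R^{-1}(A)} \mu\,f$ for all measurable $A$ to the pointwise identity $B_R(\mu) = \mu$: a priori this only gives $B_R(\mu)\,f = \mu\,f$ as measures, i.e. $B_R(\mu) = \mu$ a.e. on $\{f \neq 0\}$, which is precisely $\mathrm{supp}(f)$, so this is in fact fine. The subtler point is the behaviour on the null-or-not boundary of the support and ensuring the "extend by zero" definition of $\mu$ is genuinely $B_R$-invariant globally rather than merely on $\mathrm{supp}(f)$; here I would lean on the observation that $|f| = |R^*||f|$ forces $\{|f| > 0\}$ to be backward invariant modulo null sets, so that $B_R$ maps the zero set of $\mu$ into itself. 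A final cosmetic check: $|R^*||f| \le |R^*||f|$ trivially, and the finiteness of the invariant measure is immediate from $\||f|\|_1 = \|f\|_1 < \infty$, with positivity/properness on its support automatic. By Proposition~\ref{prop.Krengel} one may additionally note $\mathrm{supp}(f) \subseteq SC(R)$, though this is not needed for the statement as written.
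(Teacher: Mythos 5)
The paper itself gives no argument here — it simply cites Lemma~11 and Corollary~12 of \cite{MakRuelle} — so your proposal is being judged on its own merits. Your derivation of $|R^*||f|=|f|$ is essentially sound (modulo the slip $\int_A|f|=\bigl|\int_A R^*(f)\bigr|$, which should read $\int_A|f|=\int_A|R^*(f)|$, valid pointwise since $f=R^*(f)$; with the absolute value outside the integral, taking $A=\bar{\C}$ would \emph{not} force equality in the chain). The first genuine gap is exactly the step you flagged and then dismissed: from $\int_{R^{-1}(A)}\bigl(B_R(\mu)-\mu\bigr)f=0$ for all measurable $A$ you cannot conclude $\bigl(B_R(\mu)-\mu\bigr)f=0$ a.e. The sets $R^{-1}(A)$ form a proper sub-$\sigma$-algebra when $\deg R\geq 2$; vanishing of these integrals only says that the push-forward of the signed measure $(B_R(\mu)-\mu)f\,dm$ under $R$ is zero, and contributions from distinct branches can cancel. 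The step is rescuable, but by a different mechanism: either note that $|f|=|R^*||f|$ means equality holds in the triangle inequality $\bigl|\sum_i f(\zeta_i)\zeta_i'^2\bigr|\leq\sum_i|f(\zeta_i)||\zeta_i'|^2$, so a.e.\ on the support all terms share the argument of $f(z)$, which unwinds to $\mu(R(w))\overline{R'(w)}/R'(w)=\mu(w)$ for a.e.\ $w\in\mathrm{supp}(f)$; or squeeze: $\int_{\bar\C}B_R(\mu)f=\int_{\bar\C}\mu R^*(f)=\|f\|_1$ while $|B_R(\mu)f|\leq|f|$ pointwise, forcing $B_R(\mu)f=|f|$ a.e.\ and hence $B_R(\mu)=\mu$ on $\{f\neq0\}$.

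The second gap is the extension off the support, where your justification points the wrong way. What $|f|=|R^*||f|$ gives is $R^{-1}(\{f=0\})\subset\{f=0\}$ mod null (if $f(z)=0$ then every preimage term in $\sum_i|f(\zeta_i(z))||\zeta_i'(z)|^2$ vanishes), i.e.\ $\mathrm{supp}(f)\subset R^{-1}(\mathrm{supp}(f))$, which is the \emph{reverse} of the inclusion $R^{-1}(\mathrm{supp} f)\subset\mathrm{supp} f$ you assert. Consequently a point $z$ outside $\mathrm{supp}(f)$ may well have $R(z)\in\mathrm{supp}(f)$, and then your extension-by-zero satisfies $\mu(z)=0$ but $B_R(\mu)(z)=\mu(R(z))\overline{R'(z)}/R'(z)\neq0$, so the extended $\mu$ is not a global fixed point of $B_R$. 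The repair is to saturate: set $E=\bigcup_{n\geq0}R^{-n}(\mathrm{supp} f)$, which is completely invariant, define $\mu$ on $R^{-n}(\mathrm{supp} f)$ by pulling back via $\mu(z)=\mu(R^n(z))\,\overline{(R^n)'(z)}/(R^n)'(z)$ (consistency on overlaps follows from the invariance already proved on $\mathrm{supp}(f)$, which is forward invariant), and only then extend by zero off $E$. The pairing $\int f\mu=\|f\|_1\neq0$ and the finiteness of $|f|\,dm$ are fine as you state them.
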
 

\begin{proof}
This summarizes the results given in Lemma 11 and Corollary 12 in 
\cite{MakRuelle}. \end{proof}

Note that Lemma \ref{lemma.dichotomy} and Proposition 
\ref{prop.invariantmeasure} give necessary conditions for Sullivan's and 
Fatou's conjectures. 

The following lemma is a consequence of part 3 of Lemma 5  and part 1 
of Theorem 3 in \cite{MakRuelle}. For $v$ in $\C$,  define the function 
$\gamma_v$ by  $$\gamma_v(z)=\frac{v(v-1)}{z(z-1)(z-v)}.$$
Throughout the paper, the Ces\`aro averages $A_n(\gamma_v)$ of 
functions of the form $\gamma_v$,  
will be always taken with respect to the Ruelle operator $R^*.$

\begin{lemma}\label{lemma.fixed.Beltrami} Let $R$ be a rational map. 
\begin{enumerate}
\item If the Fatou set $F_R$ contains a periodic attracting domain $V$ then 
there exists an invariant Beltrami differential $\mu$ supported on the grand 
orbit of $V$ and a critical value $v_0$ such that $\int_\C \mu(z) 
\gamma_{v_0}(z)|dz|^2\neq 
0$.
\item  If $\mu$ is an invariant Beltrami differential in $L_\infty(J_R)$ then 
$\mu\neq 0$ if and only if there exists a critical value $v_0$ such that 
$\int_\C \mu(z) \gamma_{v_0}(z)|dz|^2\neq 0$.
\end{enumerate}
\end{lemma}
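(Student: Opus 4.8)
The plan is to phrase both parts as the non-vanishing of the $L_1$-functional $l_\mu\colon\phi\mapsto\int_\C\mu\,\phi\,|dz|^2$ on $Hol(R)$ --- equivalently, of the pairings $\int_\C\mu\,\gamma_{v_0}\,|dz|^2$ over critical values $v_0$ --- and then to invoke the relevant statements of \cite{MakRuelle}.

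The preliminary point is that the functions $\gamma_v$ generate $Hol(R)$ in a way compatible with the Ruelle operator. A partial fraction decomposition gives
$$\gamma_v(z)=\frac{v-1}{z}-\frac{v}{z-1}+\frac{1}{z-v},$$
so $\gamma_v$ is, up to a scalar, the unique element of $Hol(R)$ whose poles lie in $\{0,1,v\}$; since $Hol(R)$ consists of the rational functions with simple poles in the forward orbit of $V(R)\cup\{0,1\}$ and a triple zero at infinity, the family $\{\gamma_v:v\in P_R\}$ spans $Hol(R)$. By Proposition~\ref{prop.bounds}(4), $R^*\gamma_v$ is again a rational function with (at most simple) poles contained in $R(\{0,1,v\})=\{0,1,R(v)\}$ and a triple zero at infinity, hence a scalar multiple of $\gamma_{R(v)}$; since, in addition, $R(v)$ is itself a critical value whenever $v$ is a critical point, iterating and re-basing shows that the $R^*$-orbit of the finite family $\{\gamma_{v_0}:v_0\in V(R)\}$ spans $Hol(R)$ --- this is part~3 of Lemma~5 of \cite{MakRuelle}. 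On the other hand, if $\mu$ is a fixed point of $B_R$, then putting $A=\bar{\C}$ in Proposition~\ref{prop.bounds}(1) and using $B_R\mu=\mu$ yields $\int_\C\mu\,R^*(\phi)\,|dz|^2=\int_\C\mu\,\phi\,|dz|^2$ for every $\phi\in Hol(R)$, i.e.\ $l_\mu$ is $R^*$-invariant. These two facts combine to give: \emph{$l_\mu$ vanishes on $Hol(R)$ if and only if $\int_\C\mu\,\gamma_{v_0}\,|dz|^2=0$ for every critical value $v_0$}; and, since the $L_1$-closure of $Hol(R)$ is $H(K_R)\supseteq A(S_R)$ by the Bers approximation theorem, this condition is tied to the (non-)triviality of the quasiconformal deformation defined by $\mu$.

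Granting this, part (2) is immediate: the ``if'' direction is trivial since each $\gamma_{v_0}$ lies in $Hol(R)$, while the ``only if'' direction is exactly the assertion that a non-zero invariant Beltrami differential supported on $J_R$ has $l_\mu\neq 0$ on $Hol(R)$, which is part~1 of Theorem~3 of \cite{MakRuelle}. For part (1) I would first construct the differential: linearize $R$ at the attracting periodic point of the cycle of $V$, carry a Beltrami coefficient invariant under multiplication by the multiplier back to an invariant differential on $V$, spread it around the cycle, and pull it back over every preimage, producing a fixed point $\mu$ of $B_R$ supported on the grand orbit of $V$; it then remains to see that some $\gamma_{v_0}$ detects $\mu$ --- geometrically, the cycle of $V$ captures a critical point, so the associated critical value lies in the support of $\mu$ and the conformal structure on $V$ can be genuinely deformed --- and this is again supplied by Lemma~5 and Theorem~3 of \cite{MakRuelle}.

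The elementary ingredients --- the partial-fraction identity, the pole count for $R^*\gamma_v$, the $R^*$-invariance of $l_\mu$, and the cycle-and-pullback construction --- are routine. The real obstacle, and the reason the lemma is only stated as a consequence of \cite{MakRuelle}, lies in the two dynamical facts imported from there: that the $R^*$-orbit of the critical-value functions $\gamma_{v_0}$ is rich enough to separate invariant differentials, and that a non-zero invariant differential carried by $J_R$ (or by the grand orbit of an attracting cycle) cannot be annihilated by all of them. In the case $m(J_R)>0$, the latter ultimately rests on Lyubich's ergodic dichotomy --- $J_R=\bar{\C}$ with conservative ergodic action --- together with a Koebe distortion and recurrence argument near the postcritical set.
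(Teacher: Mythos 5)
Your proposal is correct and follows essentially the same route as the paper, which offers no argument of its own for this lemma beyond citing part~3 of Lemma~5 and part~1 of Theorem~3 of \cite{MakRuelle}; your reduction of both parts to exactly those two facts (via the $R^*$-invariance of $l_\mu$ and the span of the $\gamma_v$) matches. The only slip is that $R^*\gamma_v$ is not a scalar multiple of $\gamma_{R(v)}$ but rather $\frac{1}{R'(v)}\gamma_{R(v)}$ plus a correction with poles at the critical values (as the paper itself records in the proof of Lemma~\ref{lemma.linearfunc}); this does not affect your conclusion, since the spanning statement you need is precisely the cited part of Lemma~5.
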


The following theorem gives a connection between Sullivan's conjecture and 
mean ergodicity on a suitable subspace of $L_1(\C)$.

\begin{theorem}\label{th.MeanErg.1}
Let $R$ be a rational map such that $SC(R)\cap P_R$ has Lebesgue measure zero. 
Then $R$ satisfies Sullivan's conjecture if and only if $R^*$ is mean-ergodic 
on $Hol(R)$ with the topology inherited from $L_1(J_R)$.
\end{theorem}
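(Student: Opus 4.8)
The plan is to read mean ergodicity of $R^{*}$ through the separation principle of Section~\ref{section4} and then to translate fixed functionals of $(R^{*})^{*}$ into invariant Beltrami differentials supported on $J_R$; after that the theorem reduces to Lemma~\ref{lemma.dichotomy}, Proposition~\ref{prop.Krengel}, Proposition~\ref{prop.invariantmeasure} and Lemma~\ref{lemma.fixed.Beltrami}. Let $\mathcal{H}$ be the closure of $Hol(R)$ inside $L_1(J_R)$; it is $R^{*}$-invariant and $\|A_n(R^{*})\|\le 1$ on it, so $R^{*}$ is mean ergodic on $Hol(R)$ with the $L_1(J_R)$ topology precisely when it is mean ergodic on the Banach space $\mathcal{H}$. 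By the separation principle of Section~\ref{section4} (together with its elementary converse: if $A_n(R^{*})\phi\to P\phi$ then $P\phi$ is a fixed point and $l(P\phi)=l(\phi)$ for every fixed functional $l$), this happens if and only if every non-zero fixed point $l$ of $(R^{*})^{*}$ on $\mathcal{H}^{*}$ is \emph{detected} by a fixed point of $R^{*}$, i.e. $l(f)\ne 0$ for some $f\in\mathcal{H}$ with $R^{*}f=f$.

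The key step is a dictionary between such fixed functionals and invariant Beltrami differentials supported on $J_R$. If $\mu\in L_\infty(J_R)$ is an invariant Beltrami differential, then $l_\mu(\phi)=\int_{J_R}\phi\,\mu\,|dz|^{2}$ is a fixed point of $(R^{*})^{*}$, since part~(1) of Proposition~\ref{prop.bounds}, the identity $R^{-1}(J_R)=J_R$ and $B_R\mu=\mu$ give $l_\mu(R^{*}\phi)=\int_{J_R}B_R(\mu)\,\phi=l_\mu(\phi)$. Conversely, given a fixed point $l$ of $(R^{*})^{*}$ on $\mathcal{H}^{*}$, extend it by Hahn--Banach to $\mu_0\in L_\infty(J_R)=L_1(J_R)^{*}$ and let $\mathcal{K}$ be the weak-$*$ closed convex hull of the orbit $\{B_R^{\,j}\mu_0\}_{j\ge 0}$; this set is weak-$*$ compact by Alaoglu and $B_R$-invariant (as $B_R$ is the weak-$*$ continuous adjoint of $R^{*}$ on $L_1(J_R)$), so the Markov--Kakutani theorem gives $\mu_\infty\in\mathcal{K}$ with $B_R\mu_\infty=\mu_\infty$. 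For $\phi\in\mathcal{H}$ one has $\langle\phi,B_R^{\,j}\mu_0\rangle=\langle(R^{*})^{j}\phi,\mu_0\rangle=l((R^{*})^{j}\phi)=l(\phi)$, so the relation $\langle\phi,\nu\rangle=l(\phi)$ survives passing to $\mathcal{K}$; hence $l=l_{\mu_\infty}$ on $\mathcal{H}$, and $\mu_\infty\ne0$ whenever $l\ne0$. Together with Lemma~\ref{lemma.fixed.Beltrami}(2), which shows that a non-zero invariant Beltrami differential on $J_R$ is already detected by some $\gamma_{v_0}\in Hol(R)\subset\mathcal{H}$, this identifies the non-zero fixed points of $(R^{*})^{*}$ on $\mathcal{H}^{*}$ with the functionals $l_\mu$ for $\mu$ a non-zero invariant Beltrami differential supported on $J_R$.

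\textbf{Mean ergodicity $\Rightarrow$ Sullivan.} If $R$ failed Sullivan's conjecture there would be a non-zero invariant Beltrami differential $\mu$ on $J_R$ with $R$ not a flexible Latt\'es map, hence a non-zero fixed functional $l_\mu$; mean ergodicity then produces $f\in\mathcal{H}$, $f\ne0$, $R^{*}f=f$, $\int_{J_R}f\mu\ne0$, and (extending by zero) $f$ is a non-zero fixed point of $R^{*}$ on $L_1(\bar{\C})$ supported on $J_R$. By Proposition~\ref{prop.invariantmeasure}, $|f|\,|dz|^{2}$ is a finite invariant measure supported on $J_R$, so by Proposition~\ref{prop.Krengel} its support lies in $SC(R)$, making $SC(R)\cap J_R$ of positive measure. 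The first alternative in the second bullet of Lemma~\ref{lemma.dichotomy} is thus impossible (it would force $\mathrm{supp}\,|f|\subset SC(R)\cap P_R$, which is null by hypothesis), so $SC(R)=\bar{\C}$; then $C(R)=\bar{\C}$ and $C(R)\ne P_R$ since $P_R=SC(R)\cap P_R$ is null, and the third bullet of Lemma~\ref{lemma.dichotomy} turns the existence of $\mu$ into ``$R$ is a flexible Latt\'es map'', a contradiction.

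\textbf{Sullivan $\Rightarrow$ mean ergodicity.} Assume Sullivan's conjecture for $R$ and take a non-zero fixed functional $l$ on $\mathcal{H}^{*}$; by the dictionary $l=l_\mu$ with $\mu$ a non-zero invariant Beltrami differential on $J_R$, so $R$ is a flexible Latt\'es map. Then $J_R=\bar{\C}$, $P_R$ is finite, hence $\mathcal{H}=A(S_R)$; the pushforward $q$ of the square $(dz)^{2}$ of the holomorphic one-form from the torus underlying $R$ is an integrable rational quadratic differential with simple poles at the postcritical values and a triple zero at infinity, so, after choosing the normalization so that $\infty$ is not a critical value, $q\in Hol(R)\subset\mathcal{H}$, and a computation on the torus gives $R^{*}q=q$. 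Moreover every invariant Beltrami differential on $\bar{\C}$ lifts to one on the torus invariant under multiplication by the integer $n$, hence a.e. constant; thus $\mu=c\,\mu_L$ with $c\ne0$ for the canonical Latt\'es Beltrami differential $\mu_L$, and $\langle q,l_\mu\rangle=c\int_{\mathrm{torus}}|dz|^{2}\ne0$. Hence $q$ detects $l$, the separation principle holds, and $R^{*}$ is mean ergodic. The passage from $Hol(R)$ to $\mathcal{H}$, the Alaoglu--Markov--Kakutani averaging and the bookkeeping with the completely invariant set $J_R$ are routine; the step I expect to be genuinely delicate is precisely this flexible Latt\'es analysis, namely ensuring the normalization is compatible with the poles of the canonical quadratic differential so that $q\in Hol(R)$, checking that $q$ is \emph{fixed} --- not merely rescaled --- by $R^{*}=R^{*}_{(2,0)}$, and using that the space of invariant Beltrami differentials of a flexible Latt\'es map is one complex dimensional, so that this single $q$ detects \emph{every} fixed functional.
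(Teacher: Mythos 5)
Your proof is correct and its skeleton is the paper's: the separation principle of Section~\ref{section4}, Lemma~\ref{lemma.fixed.Beltrami}(2) to detect invariant Beltrami differentials on $J_R$ by the elements $\gamma_v\in Hol(R)$, and the chain Proposition~\ref{prop.invariantmeasure} $\to$ Proposition~\ref{prop.Krengel} $\to$ Lemma~\ref{lemma.dichotomy} (via the hypothesis on $SC(R)\cap P_R$) to convert a nonzero fixed point of $R^*$ into the flexible Latt\'es conclusion --- in fact you spell out exactly the step the paper compresses into one sentence. You deviate in two places. First, in the ``no invariant Beltrami differential on $J_R$'' case the paper works on all of $L_1(J_R)$: a functional annihilating $(Id-R^*)(L_1(J_R))$ is automatically $B_R$-fixed by duality and Riesz representation, so that subspace is dense and $A_n(R^*)\to 0$ everywhere, in particular on $Hol(R)$; your Hahn--Banach extension followed by Markov--Kakutani averaging is needed only because you classify fixed functionals on the subspace $\mathcal{H}^*$, where an extension of a fixed functional need not itself be fixed. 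Both routes work, and yours mirrors the averaging argument the paper itself uses in Proposition~\ref{prop.contId}. Second, and more substantially, in the flexible Latt\'es case the paper simply notes that $Hol(R)=A(S_R)$ is finite dimensional, so the power-bounded operator $R^*$ is automatically mean ergodic by the Mean Ergodicity lemma; your explicit construction of the pushed-down quadratic differential $q$ with $R^*q=q$ and $\int q\,\mu_L\neq 0$ is correct (and the facts you flag as delicate --- the pole normalization, $R^*q=q$ on the nose, one-dimensionality of the space of invariant Beltrami differentials --- are standard for flexible Latt\'es maps, the last being quoted from Milnor in the proof of Theorem~\ref{thm.regularfixed}), but it is work you could have skipped entirely: finite dimensionality plus power boundedness already yields mean ergodicity without exhibiting any fixed point.
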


\begin{proof}
Assume that $R$ satisfies Sullivan's conjecture. Then either there is no 
invariant Beltrami differential supported on the Julia set or $R$ is a 
flexible Latt\`es map. 
If there is no invariant Beltrami differential supported on $J_R$ then 
the Beltrami operator on $L_\infty(J_R)$ does not have fixed points. Hence 
$(Id-R^*)(L_1(J_R))$ is dense in $L_1(J_R)$. Otherwise by 
the Hanh-Banach theorem, there would exist a non-zero continuous functional 
$\mathcal{L}$ on $L_1(J_R)$ such that $(Id-R^*)(L_1(J_R))\subset 
ker(\mathcal{L})$. By the Riesz representation theorem there exists $\alpha\in 
L_1(J_R)$ which is fixed by the Beltrami operator and representing the functional 
$\mathcal{L}$, which is a contradiction. Then $A_n(R^*)(f)$ converges to $0$ for 
all $f$ in $L_1(J_R).$ In particular, this happens when $f\in Hol(R)$. Thus 
$R^*$ is mean-ergodic. 

If $R$ is a flexible Latt\`es map then, since $R$ is postcritically finite, the 
space $A(S_R)$ is finite dimensional.  So $A(S_R)$ coincides with the 
subspace $Hol(R)$ and, by the mean ergodicity lemma, the Ruelle operator 
$R^*$ is mean-ergodic. Note that for this part of the proof we do not need that 
$SC(R)\cap P_R$ has Lebesgue measure $0.$

Reciprocally, assume that $R^*$ is mean-ergodic in $Hol(R)$. To finish the 
theorem is sufficient to show that if the Julia set $J_R$ supports an 
invariant Beltrami differential  then $R$ is a flexible Latt\`es map.
Indeed, by Lemma \ref{lemma.fixed.Beltrami}, every fixed point of the Beltrami operator 
supported on the Julia set defines a continuous invariant functional 
$\mathcal{L}$ on $L_1(J_R)$ which is non-zero on $Hol(R)$. Let $\phi$ in 
$Hol(R)$ be such that $\mathcal{L}(\phi)\neq 0$. By mean ergodicity 
$A_n(R^*)(\phi)$ converges to some non-zero element $f$ in $L_1(J_R)$. 
By Proposition \ref{prop.invariantmeasure}, $|f|$ defines an absolutely continuous 
finite invariant measure, hence $supp(f)\subset SC(R)$. Since $SC(R)\cap P_R$ has 
measure zero then by Lemma \ref{lemma.dichotomy},   
the map $R$ is a flexible Latt\`es map. \end{proof}

Let us note that Theorem \ref{th.MeanErg.1} holds even when $S_R=\emptyset.$
Moreover, when $P_R\neq J_R$, we can consider the space 
$A(S_R)$ instead of $Hol(R)$ and get the same conclusion as in the previous 
theorem. 
As mentioned in the introduction, conjecturally, for a map $R$ with non-empty 
Fatou set, the condition on 
the postcritical set of the previous theorem is always fulfilled. 
On the other hand the mean ergodicity of Ruelle operator is a rather simple 
consequence of geometric conditions of $S_R$. For example, 
the Ces\`aro averages $A_n(\gamma_v)$ are weakly convergent on measurable 
subsets $Y$ of $S_R$ of finite hyperbolic area (see \cite{CabMakHyp} and 
discussion therein). Hence, the existence of fixed points of the Ruelle 
operator is the main obstruction to extend Theorem \ref{th.MeanErg.1} in 
full generality.

In general, as the following corollary shows, the Ruelle operator for 
rational maps is not mean-ergodic on $A(S_R)$ or $L_1(\C)$.

\begin{corollary}
 Let $R$ be a rational map.
\begin{enumerate}
\item If $F_R$ contains an attracting periodic component  then 
$R^*:A(S_R)\rightarrow A(S_R)$ is not mean-ergodic.
 \item If $F_R$ contains a periodic non-rotational component then 
$R^*:L_1(\C)\rightarrow L_1(\C)$ is not mean-ergodic.

\end{enumerate}
\end{corollary}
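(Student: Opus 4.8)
The plan is to show that, in each case, mean ergodicity of $R^*$ would manufacture a non-zero fixed point $g$ of $R^*$ in the relevant $L_1$-type space which is non-zero on a positive measure part of the basin of the given Fatou component, and that this is impossible. Indeed, by Proposition \ref{prop.invariantmeasure} a non-zero fixed point $g\in L_1$ of $R^*$ satisfies $|R^*||g|=|g|$, so $|g|\,|dz|^2$ is a finite invariant absolutely continuous measure, hence by Proposition \ref{prop.Krengel} its support lies in $SC(R)\subset C(R)$. On the other hand the grand orbit of a non-rotational periodic Fatou component is a union of wandering sets (by the classification of Fatou components used right after Proposition \ref{prop.Krengel}), so it lies in $D(R)$, and $D(R)\cap SC(R)=\emptyset$ because a wandering set is weakly wandering.

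For part (1) I would feed $\gamma_{v_0}$ into the Mean Ergodicity lemma. By Lemma \ref{lemma.fixed.Beltrami}(1) the attracting basin $B_V=\mathrm{gr}(V)$ carries an invariant Beltrami differential $\mu$ with $\int_\C\mu\,\gamma_{v_0}\,|dz|^2\neq0$ for some critical value $v_0$; necessarily $v_0\notin\{0,1\}$ (since $\gamma_0=\gamma_1\equiv0$), so $0,1,v_0$ are three distinct points of $K_R$ and $\gamma_{v_0}$ is holomorphic and integrable on $S_R$, i.e. $\gamma_{v_0}\in A(S_R)$. Then $l_\mu(\psi)=\int_{S_R}\psi\mu\,|dz|^2$ is a fixed point of the adjoint $T$ of $R^*$ on $A(S_R)$ with $l_\mu(\gamma_{v_0})\neq0$. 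If $R^*$ were mean ergodic on $A(S_R)$, the Mean Ergodicity lemma would give $A_n(R^*)(\gamma_{v_0})\to g$ in norm with $R^*g=g$ and $l_\mu(g)=l_\mu(\gamma_{v_0})\neq0$; hence $g\neq0$ and, since $\mu$ is supported on $B_V$, $g$ is non-zero on a positive measure subset of $B_V$, contradicting $\mathrm{supp}(g)\subset SC(R)$.

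Part (2) splits according to whether the non-rotational component $D$ is attracting or parabolic. If $D$ is attracting, the argument of part (1) carries over word for word with $A(S_R)$ replaced by $L_1(\C)$ and $T$ replaced by $B_R$, which is the adjoint of $R^*$ on $L_1(\C)^*=L_\infty(\C)$ by Proposition \ref{prop.bounds}(1). If $D$ is parabolic, I would instead fix a small round disc $W'\subset D$ that is wandering and disjoint from $P_R$ and from the backward orbit of the critical set, take $f\ge0$ in $L_1(\C)$ supported on $W'$ with $\int_\C f\,|dz|^2>0$, and build — by the standard equivariant extension from an attracting petal, on which a Fatou coordinate conjugates the return map to a translation — an invariant Beltrami differential $\mu$ carried by $\mathrm{gr}(D)$ with $\mu\equiv1$ on $W'$. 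Then $\int_\C f\mu\,|dz|^2=\int_{W'}f\,|dz|^2>0$, so $f$ is not annihilated by all invariant Beltrami differentials, hence $f\notin\overline{(Id-R^*)(L_1(\C))}$ by Hahn--Banach, and therefore $A_n(R^*)(f)\not\to0$ by the characterization of $\overline{(Id-R^*)(X)}$ recalled in Section \ref{section4}. But $\mathrm{supp}((R^*)^i f)\subset R^i(W')$ lies in the cycle of $D$, which is contained in $D(R)$, and $|A_n(R^*)(f)|\le A_n(|R^*|)(f)$ tends to $0$ almost everywhere on $D(R)$ by Lemma \ref{lm.con.dis}; so if $R^*$ were mean ergodic, its norm limit would have to equal its almost everywhere limit $0$, a contradiction. (This second route also handles the attracting sub-case.)

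The main obstacle is the geometric input that a non-rotational basin carries an invariant Beltrami differential detected by the test objects used above: for attracting components this is exactly Lemma \ref{lemma.fixed.Beltrami}(1), but for parabolic components one must either invoke the analogous classical statement or carry out the equivariant extension carefully — in particular checking that $W'$ can be chosen wandering and disjoint from the postcritical and critical grand orbits so that the extension is single-valued, and noting that its modulus is automatically constant along grand orbits and therefore bounded. Everything else — matching up the adjoint operators, applying the Mean Ergodicity lemma and the $\overline{(Id-R^*)(X)}$ characterization, and combining Propositions \ref{prop.invariantmeasure} and \ref{prop.Krengel} with the dissipativity of non-rotational basins — is routine.
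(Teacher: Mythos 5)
Your part (1) and the attracting sub-case of part (2) are correct and essentially the paper's own argument: an invariant Beltrami differential on the grand orbit of $V$ gives a non-vanishing invariant functional, mean ergodicity plus the Mean Ergodicity lemma would produce a non-zero $R^*$-fixed point charging that grand orbit, and Propositions \ref{prop.invariantmeasure} and \ref{prop.Krengel} force the support of any fixed point into $SC(R)$, which is disjoint (mod null sets) from the dissipative grand orbit. Your alternative route for the parabolic case --- pairing a wandering bump $f$ against an invariant $\mu$ to get $f\notin\overline{(Id-R^*)(L_1)}$, then killing the putative norm limit by the almost-everywhere convergence of Lemma \ref{lm.con.dis} --- is a genuinely different and perfectly sound way to reach the contradiction (the paper instead reuses the restriction/invariant-measure argument), and your explicit use of $\gamma_{v_0}$ as the test vector is cleaner than the paper's appeal to an unspecified $\psi$.

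There is, however, one genuine gap: your case division for part (2), ``attracting or parabolic,'' omits \emph{superattracting} periodic components, which the classification of Fatou components also allows and which the paper treats as a separate case. This is not a cosmetic omission, because your proposed construction of the invariant Beltrami differential --- choose a positive-measure set $W'$ meeting each grand orbit at most once, put $\mu\equiv 1$ there, and extend equivariantly --- provably fails in a superattracting basin: in the B\"ottcher coordinate the dynamics is $w\mapsto w^k$, and the full backward orbit of a single point is dense in the circle through it, so the closure of any grand orbit contains entire circles; by a Steinhaus-type argument no set of positive planar measure can meet each grand orbit in at most one point, so there is no measurable fundamental set from which to extend. (Your Fatou-coordinate extension is fine for parabolic, and the analogous torus fundamental domain works for attracting, but neither exists here.) The paper closes this case with the explicit formula $\nu=\Phi\bar{\Phi}'/(\bar{\Phi}\Phi')$ in the B\"ottcher coordinate $\Phi$, which one checks directly satisfies $\nu(R(z))\overline{R'(z)}/R'(z)=\nu(z)$ and has $|\nu|=1$; once such a $\nu$ is in hand, either of your two contradiction mechanisms finishes the superattracting case as well.
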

\begin{proof}
Part (1). By contradiction. Let $V$ be an attracting domain of $F_R$ and assume 
that $R^*$ is mean-ergodic on $A(S_R)$. Then by part (1) of  Lemma 
\ref{lemma.fixed.Beltrami}, there exists a Beltrami differential $\mu$, 
supported on the grand orbit of $V$, such that the functional 
$l_\mu(\phi)=\int_\C
\mu(z) 
\phi(z)|dz|^2\neq 0$ on $A(S_R)$ and $l_\mu(R^*\phi)=l_\mu( \phi)$. By the mean 
ergodicity lemma, there exists $\psi\in A(S_R)$ with $R^*(\psi)=\psi$ such that 
$l_\mu(\psi)\neq 0.$ This implies that the restriction $f$ of $\psi$ on the 
grand orbit of $V$ is non-zero and it is a fixed point of $R^*.$ By Proposition 
\ref{prop.invariantmeasure}, the function $|f|$ defines a finite invariant measure. 
However, the grand orbit of $V$ belongs to the dissipative set. Hence 
using arguments similar to the previous theorem we can show that $f$ is $0$ 
almost everywhere on the grand orbit of $V$. This is a contradiction. 

Part (2). Under this hypothesis, it is enough to show, that the grand orbit 
of any non rotational periodic domain supports a non-zero invariant Beltrami 
differential. We conclude this part using the classification of Fatou components  
and  arguments similar to those of part (1). On part (1) we already considered the attracting 
case, so by the classification of Fatou components, we have to consider the cases where 
there is a Fatou component $V$ which is either parabolic or superattracting. 
In the parabolic case, let $\Phi$ be a linearisation function defined on the 
grand orbit of $V$. That is a function satisfying $\Phi(R(z))=\Phi(z)+1.$ Then 
a short computation shows that the function $\mu=\frac{\overline{\Phi}'}{\Phi'}$ gives a 
non-zero invariant Beltrami differential.
In the superattracting case, let $\Phi$ be now the B\"ottcher coordinate around 
a neighbourhood  $U$ of the superattracting  cycle. Let 
$\nu(z)=\frac{\Phi \overline{\Phi}'}{\overline{\Phi}\Phi'}$ then for $z\in U$, we have 
$\nu(R(z))\frac{\overline{R'}(z)}{R'(z)}=\nu(z)$. Using the dynamics of $R$ we can extend to a 
non-zero invariant Beltrami differential defined on the grand orbit of 
$U.$ \end{proof}

\textbf{Remark}. By the corollary above, in order to deal with mean ergodicity 
of the Ruelle operator on $A(S_R)$ we will often assume that $R$ does not 
accept invariant Beltrami differentials defining non-trivial quasiconformal 
deformations supported on the Fatou set.  According to Theorems 6.2 and 6.8 
in \cite{McMSull},  this is the same as saying that there 
are no Herman rings and if $c$ is a critical point in $F_R$, then either 
\begin{enumerate}
\item $\mathcal{O}_+(c)=\bigcup_n R^n(c)$ is finite, or 
\item $\mathcal{O}_+(c)$ accumulates to a parabolic point $p$ and there is no other 
critical point $\tilde{c}\in F_R$ with forward orbit $\mathcal{O}_+(\tilde{c})$ 
accumulating to $p$ and $\mathcal{O}_+(\tilde{c})\bigcap \mathcal{O}_+(c)=\emptyset$. 
\end{enumerate}
In  other words,  any non-zero invariant functional on the space 
$A(S_R)$ is induced by an $R^*$-invariant Beltrami differential supported on the 
Julia set. Indeed we have the following equivalent statement.
\begin{proposition}\label{prop.inducedinv}
 Any non-zero invariant functional on the space $A(S_R)$ is induced by an $R^*$-invariant 
 Beltrami differential supported on the Julia set if and only if the Ces\`aro averages $A_n(f)$ 
 converges to zero with respect to the $L_1$ norm over the Fatou set for every $f\in A(S_R)$.
\end{proposition}
\begin{proof}
 Let $A(F_R)$ be the space of holomorphic Lebesgue integrable functions 
on $F_R.$ Then $R^*$ acts on $A(F_R)$ with $\|R^*\|\leq 1.$

We prove the proposition by contradiction. If there exists $f\in A(S_R)$ such that $\int_{F_R}|A_n(f)|$ does not 
 converges to zero, then by the separation principle on the space $A(F_R)$ there exist a 
 non-zero $R^*$-invariant continuous functional $l$ with $l(f|_{F_R})\neq 0.$ 
 
 On the other hand, by the Hahn-Banach extension and Riesz representation 
 theorems there exists a function $\nu\in L_\infty(F_R)$ representing 
 $l$ on $A(F_R)$, that is $l(g)=\int_{F_R}g \nu |dz|^2$ for every $g\in A(F_R).$ 
 We claim that we can choose $\nu$ to be an invariant Beltrami differential 
 supported on $F_R$. Indeed, if $\nu$ is not invariant then take any $*$-weak 
 accumulation point of the sequence $\nu_n=\frac{1}{n}\sum_{i=0}^{n-1} B^{i}(\nu)$ 
 where $B$ is the Beltrami  operator. Then by the mean ergodicity lemma $\nu_\infty\in L_\infty(F_R)$
 is an invariant Beltrami differential supported on the Fatou set. Moreover $\nu_\infty$ defines
 the same functional on $A(F_R)$ as $\nu.$
 
Indeed, consider a subsequence $n_i$  such that $\nu_{n_i}\rightarrow \nu_\infty$ in the $*$-weak 
topology on $L_\infty(F_R)$  so we have
 $$\int_{F_R} g(z)\nu_\infty(z)|dz|^2=\lim_{i\rightarrow \infty}\int_{F_R} \nu_{n_i}(z)g(z)|dz|^2$$
 $$=\lim_{i\rightarrow \infty}\int_{F_R} \nu(z) A_{n_i}(g(z))|dz|^2$$
 $$=\lim_{i\rightarrow \infty} \frac{1}{n_i}\sum_{j=0}^{n_i-1}l(R^{*j}(g))=l(g).$$
 Hence $L(g)=l(g|_{F_R})$ for $g\in A(S_R)$ gives a non-zero continuous $R^*$-invariant functional 
 on $A(S_R)$  induced by an invariant Beltrami differential supported on $F_R$ as claimed.

 To get a contradiction we need to show that there is no Beltrami differential  
$\mu$ supported on $J_R$ such that 
 $$L(g)=\int_\C\mu(z) g(z) |dz|^2=\int_\C\nu(z) g(z) |dz|^2$$ 
 for any $g\in A(S_R).$
 
Assume that there is such a $\mu$. Let $\phi(a)=\int_\C \gamma_a(z)(\nu(z)-\mu(z))|dz|
^2$ be the potential function for the invariant differential $\nu-\mu$, then $\phi$ is 
a continuous function on $\C$. Since $\gamma_a\in A(S_R)$ for $a\in P_R$, then $\phi(a)=0$. Now we follow the 
proof of Theorem 3 in \cite{MakRuelle}. By invariance we have $\phi(R(a))=R'(a)\phi(a)$, 
which implies that $\phi(a)=0$ for every repelling periodic point $a$, and hence on $J_R$. 
Therefore $\overline{\partial}\phi=\nu-\mu=0$ almost everywhere in $J_R.$ Which is a 
contradiction. \end{proof}

Later on, we will discuss the relation between the topologies on $Hol(R)$ 
induced by $L_1$ norms over the Fatou and the Julia set, respectively.

Next we give some conditions under which the Ruelle operator does not have a 
fixed point. We call an integrable function $f$ \textit{regular} if  the derivative
$\overline{\partial}f$, taken in the sense of distributions, is a finite complex valued 
measure.  Examples of non-regular functions are given by characteristic 
functions of suitable compact sets. 
See the remark after the proof of the following theorem.
\begin{theorem}\label{thm.regularfixed}
Let $R$ be a rational map. Assume that the postcritical set $P_R$ is
such that either

\begin{itemize}
 \item the diameter with respect to the spherical metric of all components $D$ of $S_R$ are uniformly 
bounded away from $0$, or
 
\item $J_R\cap P_R$  is contained in the union of the boundaries of the components   of  $S_R$. 
\end{itemize}

\noindent Then  $R^*$ has a regular non-zero fixed  point  if and only if $R$ is a flexible Latt\`es map.
\end{theorem}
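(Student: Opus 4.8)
The plan is to show that under either geometric hypothesis, a regular non-zero fixed point $f$ of $R^*$ cannot be supported (even partially) on the Julia set unless $R$ is a flexible Latt\'es map, and then to rule out a fixed point supported on the Fatou set by the corollary above. By Proposition~\ref{prop.invariantmeasure}, if $f \in L_1(\bar\C)$ satisfies $R^*f = f$, then $|f|$ defines a finite absolutely continuous invariant measure, so $\mathrm{supp}(f)$ is contained in the strongly conservative set $SC(R)$ by Proposition~\ref{prop.Krengel}. Decompose $f = f_J + f_F$ according to the Julia and Fatou sets. On the Fatou part: Proposition~\ref{prop.Krengel} and the discussion following it show that $SC(R) \cap F_R$ is contained in the union of rotation domain cycles; but a regular function cannot be a non-zero fixed point supported on a rotation domain cycle, since there $|f|$ would have to be a rotational invariant density of the form $c/|z|^2$ in suitable coordinates, whose $\bar\partial$ is not a finite measure (the singularity at the center). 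Hence $f_F = 0$ and $f = f_J$ is supported on the Julia set.

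The heart of the argument is the Julia-set case, where I would invoke Lemma~\ref{lemma.dichotomy}: either $C(R) \cap J_R \subset P_R$, or $C(R) = \bar\C$ with conservative ergodic action. Since $\mathrm{supp}(f) \subset SC(R) \subset C(R)$ and $f \ne 0$ on $J_R$, the first alternative would force $\mathrm{supp}(f) \subset P_R$, so $f$ would be a regular $L_1$ function supported on $P_R$; under the first hypothesis the components of $S_R$ have diameters bounded away from zero, which (together with the area bound coming from integrability) forces $P_R$ to have empty interior and, combined with regularity of $f$, forces $f = 0$ — a characteristic-function-type obstruction as in the statement's remark about non-regular functions. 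Under the second hypothesis, $J_R \cap P_R \subset \bigcup \partial V$, so again $f$ is supported on a set of measure zero (a countable union of Fatou-component boundaries, each of zero area for a non-rotational component, and the rotational boundaries are excluded as above), giving $f = 0$. Therefore we are in the alternative $C(R) = \bar\C$, i.e.\ $J_R = \bar\C$ with $R$ conservative and ergodic. Then $f = |f|/\mu$ where $\mu = |f|/f$ is, by Proposition~\ref{prop.invariantmeasure}, an invariant Beltrami differential supported on $J_R = \bar\C$; regularity of $f$ makes the invariant density $|f|$ real-analytic off a small set, and by the third bullet of Lemma~\ref{lemma.dichotomy} (McMullen's Theorem~3.17) the existence of such a fixed invariant Beltrami differential on the Julia set forces $R$ to be a flexible Latt\'es map.

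For the converse, if $R$ is a flexible Latt\'es map then $\{0,1,\infty\}\subset P_R$ is finite, $A(S_R)$ is finite dimensional and coincides with $Hol(R)$, and by the Mean Ergodicity lemma the Ces\`aro averages $A_n(R^*)$ converge; since Latt\'es maps carry an invariant Beltrami differential of the form $d\bar w/dw$ pulled back from the torus, which is regular and defines a non-zero invariant functional, the limit is a non-zero regular fixed point of $R^*$.

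The main obstacle I anticipate is the step closing off a \emph{regular} fixed point supported on a measure-zero set: one must translate the two geometric hypotheses on $P_R$ (uniform diameter bound, respectively containment in Fatou boundaries) into the statement that an $L_1$ function which is holomorphic off such a set and has distributional $\bar\partial$ a finite measure must vanish — this is where the word ``regular'' does the real work, paralleling the remark that characteristic functions of thin sets are exactly the non-regular obstructions, and it is the place where the uniform-diameter hypothesis is genuinely needed rather than cosmetic.
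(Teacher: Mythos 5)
Your skeleton matches the paper's up to the reduction: a fixed point $f$ has $\mathrm{supp}(f)\subset SC(R)$, and Lemma \ref{lemma.dichotomy} forces either the Latt\`es alternative or $\mathrm{supp}(f)\subset P_R$; your converse (finite-dimensionality of $A(S_R)$, mean ergodicity, separation principle) is also the paper's, modulo the small observation that the resulting fixed point is rational with simple poles, hence regular. But the proposal omits the entire analytic core of the theorem, and you flag it yourself as ``the main obstacle I anticipate'': namely, why a regular $L_1$ function supported on $P_R$ and holomorphic off $P_R$ must vanish. This is where both geometric hypotheses and the word ``regular'' are actually used. The paper's argument is: set $\nu=\bar\partial f$, a finite measure supported on $P_R$, and form the Cauchy transform $F(z)=\int_\C \frac{\nu(t)\,dt}{t-z}$; by Weyl's lemma $F-f$ is entire and both vanish at infinity, so $F=f$ a.e., hence $F\equiv 0$ off $P_R$. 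The first hypothesis puts $P_R$ in the scope of the generalized Mergelyan theorem, so rational functions $r(z)=\sum b_i/(z-a_i)$ with poles off $P_R$ are uniformly dense in $C(P_R)$; since $\int r\,d\nu=\sum b_i F(a_i)=0$, the Riesz representation theorem gives $\nu=0$ and so $f=0$. The second hypothesis is handled by an analogous Mergelyan--Fubini argument with the potentials $F_E(z)=\int_E \frac{dt\wedge\overline{dt}}{t-z}$ for $E\subset\partial Y_{i_0}$. None of this appears in your proposal, so the equivalence is not established.

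There is also a concrete error that would trivialize the second case if it were true: you assert that $f$ is then ``supported on a set of measure zero (a countable union of Fatou-component boundaries, each of zero area \dots)''. Boundaries of Fatou components are closed sets with empty interior but can have positive Lebesgue measure (the Julia set of a polynomial with one attracting basin \emph{is} such a boundary, and Julia sets of positive area exist); if these sets had zero area the theorem would be vacuous there and no regularity hypothesis would be needed. A similar over-reach occurs in your dismissal of the rotation-domain part of $SC(R)\cap F_R$: the invariant density from Proposition \ref{prop.Krengel} lives on an invariant annulus avoiding the center, so the singularity at the origin is not available to contradict regularity, and that step needs a different justification. Finally, in the conservative-ergodic case the appeal to the third bullet of Lemma \ref{lemma.dichotomy} is essentially right, but the intermediate claim that regularity makes $|f|$ ``real-analytic off a small set'' is neither needed nor justified.
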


\begin{proof}
Assume that $R$ is a flexible Latt\`es map. Then $R$ has an invariant Beltrami differential 
$\mu$ unique up to multiplication by scalars  (see Milnor 
\cite{MilLat}).  
This differential $\mu$ defines a non-zero functional $l_\mu$ on $A(S_R)$ given 
by the pairing
$$
l_\mu(\phi)=\int_\C \phi(z) \mu(z)|dz|^2.
$$ 
Since $A(S_R)$ is finite dimensional, then $R^*$ is mean-ergodic on $A(S_R)$, 
and by the separation principle there exists a non-zero fixed point $f\in 
A(S_R)$ of the Ruelle operator, which is unique up to multiplication by 
scalars. Since $f$ is an integrable holomorphic function outside finitely many 
points of $\C$, the function $f$ is rational with simple poles only. Hence the 
distributional derivative $\overline{\partial}f$ is a finite combination of Dirac 
measures supported on the poles of $f.$

The Beltrami differential $\mu$ is a unique fixed point of the 
Beltrami operator. Since the Beltrami operator $B:L_1(\C)\rightarrow L_1(\C)$
is dual to $R^*$, then by the separation principle, the operator $R^*$ is mean-ergodic
on $L_1(\C).$ Hence $f\in L_1(\C)$ is a unique fixed point of the Ruelle operator up to 
scalar multiplication.

Now let $f$ be a non-zero regular fixed point of the Ruelle operator. Then by
Proposition \ref{prop.invariantmeasure}, the function  $|f|$ is the density of a finite 
invariant measure and there is  an invariant Beltrami differential $\mu$ with 
$\mu=\frac{|f |}{f}$ on $supp(f)$. Hence $supp(f)\subset SC(R)$ by Proposition \ref{prop.Krengel}.   Then by
Lemma \ref{lemma.dichotomy}, either $R$ is a flexible Latt\`es map or the 
support $supp(f)$ is contained in the postcritical set.
 
Without loss of generality we may assume that $P_R$ is a proper subset of $\C$. Let  
$\nu$ be such that $d\nu=\overline{\partial} f$ and set
$$
F(z)=\int_\C \frac{d\nu(t)}{t-z}.
$$ 
Since the support of $\nu$ belongs to the support of $f$, the map $F(z)$ is 
holomorphic outside $P_R.$ Note that $F(z)=f(z)$ holds for Lebesgue 
almost every point. Indeed, by Weyl's lemma there exists an entire function $h(z)$ such that
$h(z)=F(z)-f(z)$ almost everywhere. Since $f(z)$ has compact support then $F(z)$ 
converges to $0$ as $z$ converges to $\infty$, thus $h(z)=0$.  In particular, $F(z)$ is 
identically $0$ outside the support of $\nu$.

We claim that the first condition of Theorem \ref{thm.regularfixed} implies that $\nu$ is identically $0$ on $\C$.  
Recall that the generalised Mergelyan's theorem (see, for example, 
Theorem 10.4 on Gamelin's paper \cite{GamelinUniAlg}) states:  \textit{if 
the diameters of all  components of the complement of a compact set $K$ 
on the plane $\C$ are bounded away from $0$ then any continuous function 
which is holomorphic on the interior of $K$ is a uniform limit of rational 
functions with poles outside of $K$}. Since $P_R$ satisfies the conditions 
of the generalised Mergelyan's theorem and has empty interior, then 
any continuous function on $P_R$ is a uniform limit of rational functions 
with poles outside $P_R.$ Given a finite set of complex numbers $b_i$ 
and points $a_i$ in $\C \setminus P_R$, define $r(z)=\sum \frac{b_i}{z-a_i}$ 
and we get
$$\int r d\nu=\sum b_i F(a_i)=0.
$$ Hence $\nu$ represents a zero functional on the space of 
continuous functions on $P_R.$ By  the Riesz representation theorem the 
measure $\nu$ is null as claimed. 

Thus $f(z)=0$ almost everywhere which is a contradiction.  

Now we assume that $P_R$ satisfies the second condition and follow closely the 
arguments of part 3 of Proposition 14 in \cite{MakRuelle}. Let $\{Y_i\}$ be the family of 
all components of 
$\C\setminus P_R$. Then we claim that $f(z)=0$ almost everywhere on 
$\cup_i \partial Y_i\subset P_R$ whenever $F(z)=f(z)$ almost everywhere. 

Otherwise, there exists a component $Y_{i_0}$ and $E\subset \partial 
Y_{i_0}$ with $m(E)>0$ and $\int_E f\neq 0$. Since $P_R$ is compact 
we can assume that $\infty$ belongs to  $Y_{i_0}$ by conjugating by a M\"obius map. 
Then the function $F_E(z)=\int_E \frac{|dt|^2}{t-z}$
is a continuous function on the plane which is holomorphic outside $\partial 
Y_{i_0}.$ Again, by the generalised Mergelyan theorem $F_E$ is a uniform limit 
of rational functions with poles in $Y_{i_0}$. Hence using similar 
arguments as above we obtain $\int F_E(z) \overline{\partial} f(z)=0$. Applying Fubini's 
theorem we compute   $$0=\int F_E(z) \overline{\partial} f(z) =\int 
\overline{\partial}f(z)\int_E \frac{|dt|^2}{t-z}$$$$=\int_E |dt|^2\int \frac{\overline{\partial}f(z)}{t-z}.$$ 
$$=-\int_EF(t)|dt|^2=-\int_E f(t)|dt|^2.$$

This is a contradiction, so we have the claim.

Now if $P_R\subset \bigcup \partial Y_i$ then  by the claim $f(z)=0$ almost 
everywhere in $\C$ this contradiction finishes the proof. \end{proof}

Let us note that if $P_R\cap J_R\subset  \partial V$, where  $V$ is a component of
the Fatou set $F_R$ then the second condition of the theorem is always satisfied. Indeed,
in this situation only finitely many components of $F_R$ contain $P_R\cap J_R$ on its 
boundary and hence $P_R\cap J_R$ belongs to the boundary of finitely many components 
of $S_R.$ So, if $R$ has a completely invariant Fatou component then $R$ satisfies the 
second  condition of the theorem. On the other hand, we do not know an example of a 
rational map $R$ such that $S_R$ has infinitely many components.
On the discussion above we saw that the convergence of Ces\`aro averages on 
subspaces of $L_1(\C)$ is closely related to the existence of non-trivial 
invariant Beltrami differentials under some conditions. 
\medskip

\textbf{Remark}: The arguments of the proof of Theorem \ref{thm.regularfixed} also provide explicit examples of compact sets 
whose characteristic functions are not regular. Indeed, let $K$ be a compact subset 
satisfying the generalized Mergelyan theorem (for example a positive Lebesgue measure 
Cantor set), if the characteristic function $\chi_K$ is regular, by Weyl's lemma we have 
$\chi_K(x)=\int \frac{\overline{\partial}\chi_K}{z-x}$ almost everywhere which 
contradicts the generalized Mergelyan theorem.

\begin{corollary}\label{cor.totvariation}
Let $R$ be a rational map satisfying the conditions of  Theorem \ref{thm.regularfixed}. Suppose there 
exists a critical value $v\in V(R)$ such that the total variation 
of $\overline{\partial}A_n(\gamma_v)$ is uniformly bounded. Then  $R$ is not 
structurally stable.
\end{corollary}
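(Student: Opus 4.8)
The plan is to turn the analytic hypothesis into the existence of a regular fixed point of $R^*$ and then to feed it into Theorem~\ref{thm.regularfixed}, using the structure theory of $J$-stable maps to dispose of the remaining case. Since $\gamma_v\in Hol(R)$ and $R^*$ is an $L_1(\bar{\C})$-contraction (Proposition~\ref{prop.bounds} and the discussion after it), each average $A_n(\gamma_v)$ lies in $Hol(R)$ with $\|A_n(\gamma_v)\|_1\le\|\gamma_v\|_1$, and $\nu_n:=\bar{\partial}A_n(\gamma_v)$ is a finite atomic measure supported on the forward orbit of $V(R)\cup\{0,1\}$, with $\|\nu_n\|$ bounded by hypothesis. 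As $M(\bar{\C})=C(\bar{\C})^*$ and $C(\bar{\C})$ is separable, the weak-$*$ topology on bounded sets is metrizable, so a subsequence satisfies $\nu_{n_k}\rightharpoonup\nu$ weak-$*$ with $\nu$ finite. Each $A_n(\gamma_v)$ has a triple zero at $\infty$ and is holomorphic off $P_R$, so by Weyl's lemma it coincides with the Cauchy transform of $\nu_n$; since the Cauchy kernel lies in $L^1_{\mathrm{loc}}(\bar{\C})$ and its translates are uniformly integrable over sets of small measure, Vitali's convergence theorem gives $A_{n_k}(\gamma_v)\to f$ in $L_1(\bar{\C})$, where $f$ is the Cauchy transform of $\nu$, hence regular. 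Finally $R^*A_{n_k}(\gamma_v)=A_{n_k}(\gamma_v)-\tfrac1{n_k}\bigl(\gamma_v-(R^*)^{n_k}\gamma_v\bigr)\to f$ in $L_1$ as well, and $R^*$ is $L_1$-continuous, so $R^*f=f$; by the Mean Ergodicity lemma $A_n(\gamma_v)\to f$ in norm along the whole sequence.

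Now I would split into cases on $f$. If $f\neq0$, then $f$ is a non-zero regular fixed point of $R^*$, and since $R$ is of the type in Theorem~\ref{thm.regularfixed} we conclude that $R$ is a flexible Latt\'es map; flexible Latt\'es maps admit non-trivial quasiconformal deformations and are accumulated by non-conjugate (e.g.\ hyperbolic) maps, hence are not structurally stable, which is the assertion. If $f=0$, so that $A_n(\gamma_v)\to0$ in $L_1(\bar{\C})$, assume for contradiction that $R$ is structurally stable. First, $R$ cannot be hyperbolic: if it were, the orbit of the critical value $v$ would converge to an attracting cycle, and tracking the pole of $\gamma_v$ at $v$ forward under $R^*$ (a Koenigs-coordinate and Koebe distortion computation of the type underlying Proposition~\ref{prop.bounds}) shows that the residue of this pole in $(R^*)^k\gamma_v$ grows geometrically in $k$, so $\|\bar{\partial}A_n(\gamma_v)\|\to\infty$, against the hypothesis. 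Thus $R$ is non-hyperbolic; being structurally stable it is $J$-stable, so by Ma\~n\'e--Sad--Sullivan it carries a non-zero invariant Beltrami differential $\mu$ supported on $J_R$, and it is not flexible Latt\'es. By Lemma~\ref{lemma.dichotomy} this excludes $C(R)=\bar{\C}$, so $C(R)\cap J_R\subset P_R$ and $\mu$ is supported inside $P_R$; in the setting of Theorem~\ref{thm.regularfixed} this is incompatible with the vanishing forced by the Mergelyan argument in its proof, which is the contradiction.

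The delicate point is this last step: one must rule out a structurally stable, non-hyperbolic, non-Latt\'es map using only the geometric hypotheses allowed on $P_R$ — equivalently, one must show that the vanishing $A_n(\gamma_v)\to0$ for a single critical value $v$ cannot coexist with the invariant Beltrami differential on $J_R$ supplied by $J$-stability. There are two natural routes: either propagate the boundedness of $\|\bar{\partial}A_n(\gamma_v)\|$ from one critical value to all of them and invoke Lemma~\ref{lemma.fixed.Beltrami}(2), so that $\langle A_n(\gamma_{v_0}),\mu\rangle=\langle\gamma_{v_0},\mu\rangle\neq0$ for every $n$ contradicts $A_n(\gamma_{v_0})\to0$; or sharpen Theorem~\ref{thm.regularfixed} to forbid directly an invariant Beltrami differential supported on $P_R$. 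Carrying out whichever of these, together with the bookkeeping of the poles of $A_n(\gamma_v)$ along the postcritical orbit that it requires, is where the real work lies; the functional-analytic extraction of $f$ and the appeal to the already-established Theorem~\ref{thm.regularfixed} are routine by comparison.
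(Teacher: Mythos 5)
Your first half follows the paper's route: you extract a weak-$*$ accumulation point of the bounded-variation measures $\bar\partial A_n(\gamma_v)$, recover $A_n(\gamma_v)$ as the Cauchy transform of those measures, and feed a non-zero limit into Theorem \ref{thm.regularfixed} to conclude that $R$ is flexible Latt\'es, hence not structurally stable. (One caveat: your Vitali argument for $L_1$-convergence needs pointwise a.e.\ convergence of the Cauchy transforms, which weak-$*$ convergence of the measures only yields off $P_R$; when $m(P_R)>0$ this step is not justified. The paper sidesteps this by never claiming norm convergence of $A_n(\gamma_v)$: in the degenerate case it only uses that $\bar\partial A_n(\gamma_v)\to 0$ in the weak-$*$ topology on measures.)

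The genuine gap is exactly the step you flag as ``where the real work lies'': you do not close the case $\bar\partial A_n(\gamma_v)\to 0$. The paper closes it with a short integration-by-parts argument you are missing. For any invariant Beltrami differential $\mu$, the potential $F_\mu(z)=-\frac{z(z-1)}{\pi}\int\frac{\mu(\zeta)|d\zeta|^2}{\zeta(\zeta-1)(\zeta-z)}$ is \emph{continuous} on $\C$ with $\bar\partial F_\mu=\mu$; testing the weak-$*$-null sequence $\bar\partial A_n(\gamma_v)$ against $F_\mu$ and integrating by parts gives $\int_\C\mu\,A_n(\gamma_v)\to 0$, while invariance of $\mu$ gives $\int_\C\mu\,A_n(\gamma_v)=\int_\C\mu\,\gamma_v$ for all $n$; hence $\int_\C\mu\,\gamma_v=0$ for \emph{every} invariant $\mu$. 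The contradiction with structural stability is then supplied by Theorem 3 and part (3) of Lemma 5 of \cite{MakRuelle}: for a structurally stable map and for \emph{this particular} critical value $v$, there exists an invariant Beltrami differential with $\int_\C\mu\,\gamma_v\neq 0$ (the critical values move independently under quasiconformal deformations). This is the external input your ``route 1'' is groping for; with it there is no need to ``propagate the boundedness to all critical values'' (which is neither available from the hypothesis nor needed), no need for the hyperbolic-case digression with its unsubstantiated residue-growth claim, and no need for the incorrect inference that the Beltrami differential supplied by $J$-stability is supported in $P_R$ (the support of an invariant differential need not lie in the conservative set). As written, your argument does not reach the conclusion.
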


\begin{proof}

Since the total variation of the sequence $\overline{\partial}A_n(\gamma_v)$ is 
uniformly bounded, it is $*$-weakly precompact 
when acting on continuous functions. 

Let $m_0$ be a complex valued measure 
which is a $*$-weak accumulation point of this sequence. Since 
$supp(\overline{\partial}A_n(\gamma_v))\subset P_R$ then $supp(m_0)\subset P_R$. 
Considering $\overline{\partial}A_n(\gamma_v)$ as measures,  a straightforward computation 
shows that $$A_n(\gamma_n)(z_0)=-\int_{\C} 
\frac{\overline{\partial}A_n(\gamma_v)(t)}{t-z_0}=-\int_{P_R} 
\frac{\overline{\partial}A_n(\gamma_v)(t)}{t-z_0}$$ for every $z_0$ outside $P_R.$ 

If $m_0\neq0$ then, as in Theorem \ref{thm.regularfixed}, by the generalized Mergelyan theorem the integral $-\int_\C \frac{dm_0(t)}{t-z}$ is 
non-zero and is an accumulation point of $A_n(\gamma_v)$ in the topology of 
pointwise convergence on $S_R$. Therefore, this integral is a regular non-zero fixed point. 
By Theorem \ref{thm.regularfixed}, the map $R$ is a Latt\`es map which is not structurally stable.

The remaining the case is when $\overline{\partial}A_n(\gamma_v)$ converges 
to $0$ in the $*$-weak topology. Let $\mu\in L_\infty(\C)$ and consider its
potential function $$F_\mu(z)=-\frac{z(z-1)}{\pi}\int  \frac{\mu(\zeta) 
|d\zeta|^2}{\zeta(\zeta -1)(\zeta-z)}.$$ Recall that $F_\mu$ is continuous on $\C$ and satisfies $\overline{\partial}F_\mu(z)=\mu(z)$ 
in the sense of distributions.

We claim that if $\mu$ is a fixed point of the Beltrami operator then we have 
 $\int_\C \mu(z) \gamma_v(z)|dz|^2=0.$
 
 Indeed, since $\int F_\mu(z) 
\overline{\partial}A_n(\gamma_v)(z)$ converges to $0$ and $\mu$ is 
invariant, we have that 
$$
\int_\C F_\mu(z)
\overline{\partial} A_n(\gamma_v)(z)=-\int_\C 
\overline{\partial}F_\mu(z) A_n(\gamma_v)(z)|dz|^2$$$$=-\int_\C \mu(z) 
A_n(\gamma_v)(z)|dz|^2
$$ which by duality and the invariance of $\mu$ implies 
$$
\int_\C \mu(z) A_n(\gamma_v)(z)|dz|^2=\int_\C \mu(z) 
\gamma_v(z)|dz|^2=0
$$
as claimed.

If $R$ is structurally stable and $\mu$ is an invariant differential then  by part (3) of Lemma 5 
in \cite{MakRuelle}, the potential $F_\mu$ satisfies  for  $a\in \C$ the equation
\begin{equation*}F_\mu(R(a))-R'(a)F_\mu(a)=-R'(a)\sum_{c_i} \frac{1}{R''(c_i)}F_\mu(R(c_i))\gamma_a(c_i)  \tag{*}\end{equation*} 
where the sum is taken over all critical points $c_i$ with $i=1...(2deg(R)-2).$
Moreover, by Theorem 3 of \cite{MakRuelle} there exists a $(2deg(R)-2)$ dimensional space $X$ 
of invariant Beltrami differentials, so that the correspondence 
$$\beta:\mu\mapsto F_\mu(R(a))-R'(a)F_\mu(a)$$ is a linear isomorphism of $X$ onto its image. 
But by the claim $$F_\mu(v)=\int_\C \mu(z) \gamma_v(z)|dz|^2=0$$ for every invariant Beltrami 
differential $\mu$, in particular for $\mu\in X.$ Since $v$ is a critical value, then using  the right 
part of equation $(*)$, we get that the space $\beta(X)$ has dimension at most $2deg(R)-3$, which 
is a contradiction. \end{proof}

 As an immediate corollary we have the following.

\begin{corollary}
Assume the Julia set $J_R$ has positive Lebesgue measure, then $J_R$  does not 
supports a non-zero invariant Beltrami differential 
if and only if, for any critical value $v$, the sequence 
$\overline{\partial}A_n(\gamma_v)$ converges to $0$ on every 
continuous function $\phi$ on $J_R$ with distributional 
derivative $\phi_{\overline{z}}\in L_\infty(J_R)$.
\end{corollary}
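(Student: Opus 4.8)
The plan is to deduce this corollary from Corollary \ref{cor.totvariation} and Theorem \ref{thm.regularfixed} by dualizing the convergence statement. Observe first that the condition ``$\bar\partial A_n(\gamma_v)(\phi)\to 0$ for every continuous $\phi$ on $J_R$ with $\phi_{\bar z}\in L_\infty(J_R)$'' is really a pairing statement: writing $\bar\partial A_n(\gamma_v)$ as a distribution supported in $P_R\subset J_R$ (using $J_R$ has positive measure, so the relevant $S_R$ components and the Mergelyan hypotheses of Theorem \ref{thm.regularfixed} can be arranged), the quantity $\bar\partial A_n(\gamma_v)(\phi)=\int_\C \phi\,\bar\partial A_n(\gamma_v)$ equals, after integration by parts, $-\int_\C \phi_{\bar z}\,A_n(\gamma_v)\,|dz|^2$. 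So the hypothesis says exactly that $A_n(\gamma_v)$ pairs to zero against all $L_\infty(J_R)$ Beltrami coefficients that arise as $\phi_{\bar z}$ for such continuous $\phi$; these are precisely the ``regular'' coefficients in the sense used before Theorem \ref{thm.regularfixed}.

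For the forward direction, assume $J_R$ supports no non-zero invariant Beltrami differential. By Theorem \ref{thm.regularfixed} (whose hypotheses $R$ satisfies, since $R$ is then not a flexible Latt\`es map) the Ruelle operator has no regular non-zero fixed point, and more to the point, by the argument in Corollary \ref{cor.totvariation} one shows $\bar\partial A_n(\gamma_v)\to 0$ $*$-weakly on continuous functions. Pairing against a continuous $\phi$ with $\phi_{\bar z}\in L_\infty(J_R)$ then gives $\bar\partial A_n(\gamma_v)(\phi)\to 0$ directly — indeed $\phi$ is itself a continuous test function, so $*$-weak convergence to $0$ already yields the claim; the regularity hypothesis on $\phi$ is what makes the reverse implication have content.

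For the reverse direction, suppose for contradiction that $J_R$ carries a non-zero invariant Beltrami differential $\mu$. By part (2) of Lemma \ref{lemma.fixed.Beltrami} there is a critical value $v_0$ with $\int_\C \mu(z)\gamma_{v_0}(z)\,|dz|^2\neq 0$. We want to replace $\mu$ by a \emph{regular} invariant differential still detecting $\gamma_{v_0}$; this is the main obstacle, since a general $L_\infty$ invariant differential need not be of the form $\phi_{\bar z}$ with $\phi$ continuous. The route is via the potential $F_\mu(z)=-\tfrac{z(z-1)}{\pi}\int \tfrac{\mu(\zeta)|d\zeta|^2}{\zeta(\zeta-1)(\zeta-z)}$ from Corollary \ref{cor.totvariation}: $F_\mu$ is continuous on $\C$ with $\bar\partial F_\mu=\mu$, and the computation in that proof shows $\int_\C \mu\,A_n(\gamma_{v_0})\,|dz|^2 = \int_\C \mu\,\gamma_{v_0}\,|dz|^2\neq 0$ for all $n$, hence does not tend to $0$. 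But by integration by parts this integral equals $-\bar\partial A_n(\gamma_{v_0})(F_\mu)$, and $F_\mu$ is exactly a continuous function with $(F_\mu)_{\bar z}=\mu\in L_\infty(J_R)$ (after checking that $\mu$ supported on $J_R$ makes $F_\mu$ restrict to a legitimate test function on $J_R$, using again $m(J_R)>0$ and that the complement of $J_R$ is handled by the ``no Fatou deformation'' reductions of the preceding remarks). Thus $\bar\partial A_n(\gamma_{v_0})(F_\mu)\not\to 0$, contradicting the hypothesis. Therefore no such $\mu$ exists, completing the equivalence.
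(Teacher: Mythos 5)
Your reverse direction is correct and is the intended argument: given a non-zero invariant $\mu$ on $J_R$, Lemma \ref{lemma.fixed.Beltrami}(2) supplies $v_0$ with $\int\mu\gamma_{v_0}\neq 0$, the potential $F_\mu$ is a continuous function with $\bar\partial F_\mu=\mu\in L_\infty(J_R)$, and invariance gives $\bar\partial A_n(\gamma_{v_0})(F_\mu)=-\int\mu A_n(\gamma_{v_0})=-\int\mu\gamma_{v_0}\neq 0$, so the sequence cannot tend to $0$.

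The forward direction, however, has a genuine gap. You deduce $\bar\partial A_n(\gamma_v)(\phi)\to 0$ from the claim that $\bar\partial A_n(\gamma_v)\to 0$ $*$-weakly on \emph{all} continuous functions, ``by the argument in Corollary \ref{cor.totvariation}.'' That argument is not available here: it requires the hypothesis that the total variations of the measures $\bar\partial A_n(\gamma_v)$ are uniformly bounded (these are finite combinations of point masses whose total mass can a priori grow with $n$), and it also invokes Theorem \ref{thm.regularfixed}, whose hypotheses are geometric conditions on $P_R$ (uniformly bounded component diameters, or $J_R\cap P_R\subset\bigcup\partial V$) — not, as you write, the condition of not being a flexible Latt\'es map. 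Neither assumption appears in the present corollary. The correct route is already latent in your own first paragraph: from the integration-by-parts identity $\bar\partial A_n(\gamma_v)(\phi)=-\int_{J_R}\phi_{\bar z}\,A_n(\gamma_v)\,|dz|^2$ you only need norm convergence $\|A_n(\gamma_v)\|_{L_1(J_R)}\to 0$, which follows from the absence of invariant Beltrami differentials on $J_R$ exactly as in the proof of Theorem \ref{th.MeanErg.1}: by Hahn--Banach and Riesz representation, $(Id-R^*)(L_1(J_R))$ is dense in $L_1(J_R)$, hence the Ces\`aro averages of every element tend to $0$ in $L_1(J_R)$; then $|\bar\partial A_n(\gamma_v)(\phi)|\le\|\phi_{\bar z}\|_\infty\|A_n(\gamma_v)\|_{L_1(J_R)}\to 0$. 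With that substitution the proof closes; as written, the forward implication does not.
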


\begin{proof}
 By contradiction. First note that for every continuous function $\phi$ with distributional derivative $\phi_{\overline{z}}$ we have
 $$\int \phi \overline{\partial}A_n(\gamma_v)=-\int \phi_{\overline{z}} A_n(\gamma_v(z))|dz|^2.$$
 By duality $$-\int \phi_{\overline{z}} A_n(\gamma_v(z))|dz|^2=-\frac{1}{n}\sum_{j=1}^ n \int B^j(\phi_{\overline{z}})\gamma_v(z)|dz|^2,$$ where $B$ is the Beltrami operator.
 If there exists $\phi$ such that the sequence $\int \phi \overline{\partial}A_n(\gamma_v)$
 does not converges to $0$, then any  accumulation point of $-\frac{1}{n}\sum_{j=1}^ n B^j(\phi_{\overline{z}})$ is a non-zero invariant Beltrami differential. 
 Reciprocally, if there is a non-zero invariant Beltrami differential $\mu$, then 
 its potential $F_\mu$ is continuous and $(F_\mu)_{\overline{z}}=\mu$, then $$-F_\mu(v)=\int \mu(z) A_n(\gamma_v(z))|dz|^2=\int F_\mu \overline{\partial}A_n(\gamma_v)$$
 converges to $0$. So $-F_\mu(v)=0$ for all critical value $v$. This contradicts part 2 of Lemma \ref{lemma.fixed.Beltrami}.
\end{proof}

In the following statements we show that there are no fixed points of the 
Ruelle operator among the examples of non-regular functions mentioned above.

\begin{proposition}\label{prop.chiA}
Assume that a function $g=f+c\chi_A$, where $c$ is a constant and $\chi_A$ is 
the characteristic  function of a measurable set $A$ such that 
$A\setminus supp(f)$ has positive measure. If $g$ is a fixed point of 
the Ruelle operator then $R^*(f)=f$ and $c=0.$
\end{proposition}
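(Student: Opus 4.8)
The plan is to extract information from the hypothesis that $g = f + c\chi_A$ is a fixed point of $R^*$ by combining two facts: the modulus inequality $|R^*(\phi)| \le |R^*|(\phi)$ together with the invariant-measure machinery of Proposition~\ref{prop.invariantmeasure}, and the fact that $|R^*|$ (the Perron–Frobenius operator) is a genuine positive contraction on $L_1$. First I would apply Proposition~\ref{prop.invariantmeasure} to $g$: since $R^*(g)=g$, the function $|g|$ satisfies $|R^*||g| = |g|$, so $|g|\,|dz|^2$ is a finite absolutely continuous invariant measure. By Proposition~\ref{prop.Krengel} the support of $|g|$ must lie in the strongly conservative set $SC(R)$. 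Now on the set $A \setminus \mathrm{supp}(f)$ (which has positive measure) we have $g = c\chi_A$, i.e.\ $|g| = |c|$ there; if $c \ne 0$ this piece of positive measure lies in $\mathrm{supp}(g) \subset SC(R)$.

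The key step is to exploit the equality case $|R^*||g| = |g|$ pointwise. Writing $g = f + c\chi_A$ and using that $|R^*|$ is additive and positive on nonnegative functions, one gets $|R^*|(|f| + |c|\chi_{A\setminus \mathrm{supp}(f)} + (\text{overlap terms})) \ge$ the various pieces, and the global equality $\int |R^*||g| = \int |g|$ (which is an equality, not just $\le$, precisely because $g$ is fixed and we are integrating over a completely invariant set) forces equality in each of the triangle-inequality steps almost everywhere. Concretely, for almost every $z$ one needs $|\sum_i g(\zeta_i(z))(\zeta_i'(z))^2| = \sum_i |g(\zeta_i(z))||\zeta_i'(z)|^2$, i.e.\ all the branch contributions $g(\zeta_i(z))(\zeta_i'(z))^2$ are aligned (have a common argument) wherever they are nonzero. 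Then I would look at a point $w \in A \setminus \mathrm{supp}(f)$ of positive-density, pull back: a preimage branch lands $w$ as some $\zeta_{i_0}(z_0)$, where $g(\zeta_{i_0}(z_0)) = c\chi_A(w) = c$ is a pure constant times a real positive weight, whereas another branch may hit $\mathrm{supp}(f) \setminus A$ where $g = f$ has variable argument; the alignment condition, combined with the fact that on $A$ the value $c\chi_A$ contributes only the single constant phase $\arg c$, will force $f$ itself to be (phase-)compatible in a way that is untenable unless $f \equiv 0$ on a forward-invariant set or $c = 0$. To make this clean I would instead argue as follows: since $|g|$ defines an invariant measure and $A\setminus\mathrm{supp}(f)$ has positive measure inside its support, consider the function $h = \chi_{A\setminus\mathrm{supp}(f)} \cdot |g| = |c|\chi_{A\setminus \mathrm{supp}(f)}$; by invariance and conservativity the forward and backward orbit of $A\setminus\mathrm{supp}(f)$ cover a set of full $|g|\,dm$-measure, but on that orbit $g$ must simultaneously equal a translate of the constant $c\chi$ and the function $f$, which are $L^1$-independent — contradiction unless $c=0$.

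Once $c=0$ is established, the statement $R^*(f)=f$ is immediate: $g=f$ and $g$ was assumed fixed. The main obstacle I expect is making the alignment/equality-case argument rigorous in the presence of branch points and of the overlap $A \cap \mathrm{supp}(f)$, where $g = f + c$ rather than a pure constant; the cleanest route around this is probably to avoid pointwise phase bookkeeping entirely and argue purely at the level of measures — using that $|g|\,dm$ is invariant, that $\chi_A|g|$ and $\chi_{A^c}|g|$ cannot both be pushed forward consistently while keeping $g$ itself invariant, and appealing to the ergodic decomposition on $SC(R)$ from Proposition~\ref{prop.Krengel}. I would also double-check the degenerate possibility that $f$ and $c\chi_A$ conspire so that $\mathrm{supp}(g)$ is strictly smaller than $\mathrm{supp}(f)\cup A$; the hypothesis that $A\setminus\mathrm{supp}(f)$ has positive measure is exactly what rules this out, since there $g = c\chi_A$ cannot cancel anything.
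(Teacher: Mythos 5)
Your setup is the right one --- you correctly invoke Proposition~\ref{prop.invariantmeasure} to get that $|g|$ defines a finite absolutely continuous invariant measure, hence (via Proposition~\ref{prop.Krengel}) that $\mathrm{supp}(g)$, and in particular $B=A\setminus\mathrm{supp}(f)$ when $c\neq 0$, sits inside $SC(R)$, and that $|g|=|c|$ on $B$. But neither of your two proposed ways of closing the argument actually closes it. The branch-alignment route ends with ``will force $f$ itself to be (phase-)compatible in a way that is untenable,'' which is exactly the step that needs a proof; and the measure-theoretic alternative rests on the claim that the orbit of $B$ covers a set of full $|g|\,dm$-measure, which requires ergodicity you have not established, and on the phrase ``$L^1$-independent,'' which is not an argument. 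So there is a genuine gap.

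The missing idea is to use the \emph{other} half of Proposition~\ref{prop.invariantmeasure}: the function $\mu=|g|/g$ is an invariant Beltrami differential on $\mathrm{supp}(g)$, i.e.\ $\mu(z)=\mu(R(z))\,\overline{R'(z)}/R'(z)$ a.e., and on $B$ it equals the \emph{constant} $|c|/c$. Since $B\subset SC(R)$ has positive measure, Poincar\'e recurrence gives a $k$ with $m\bigl(B\cap R^{k}(B)\bigr)>0$, hence $C=B\cap (R^{k})^{-1}(B)$ has positive measure. For $z\in C$ both $z$ and $R^{k}(z)$ lie in $B$, so $\mu(z)=\mu(R^{k}(z))$ and the iterated invariance relation forces $\overline{(R^{k})'(z)}/(R^{k})'(z)=1$, i.e.\ $(R^{k})'$ is real-valued on $C$. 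But $(R^{k})'$ is a non-constant holomorphic function, so $\bigl((R^{k})'\bigr)^{-1}(\mathbb{R})$ has Lebesgue measure zero --- contradicting $m(C)>0$. This is the decisive step your proposal lacks: it converts the constancy of the phase of $g$ on $B$ into a rigidity statement about $(R^{k})'$ on a positive-measure set, with no ergodicity assumption and no pointwise bookkeeping of branches of $R^{*}$. Once $c=0$ is forced, $R^{*}(f)=f$ is, as you say, immediate.
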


\begin{proof}

If $B=A\setminus supp(f)$, then by Proposition \ref{prop.Krengel} and 
Proposition \ref{prop.invariantmeasure}, we have that $D=B\cap SC(R)$ has positive measure and 
$\mu(z)=\frac{|g(z)|}{g(z)}=\mu(R(z))\frac{\overline{R'(z)}}{R'(z)}$ almost 
everywhere on $supp(g).$
Then there exists $k$ such that the measure of $D\cap R^{k}(D)$ is positive, 
and hence the set $C=D\cap (R^{k})^{-1}(D)$ has positive measure.  Moreover, 
we have $\mu=\frac{|c|}{c}$ on $C.$ If $c\neq 0$ then by invariance of $\mu$, we 
have that $(R^k)'$ is real valued on $C$ and thus $C\subset 
((R^{k})')^{-1}(\mathbb{R})$ which contradicts $m(C)>0.$ 

\end{proof}
As an immediate corollary we have that a simple function cannot be a fixed point 
of the Ruelle operator.

\begin{corollary}\label{cor.simplefunc}
 If $f=\sum c_i \chi_{A_i}$ where the $A_i$ are distinct measurable 
sets, then $f$ is a fixed point of the Ruelle operator if and only if $c_i=0$ 
for all $i.$
\end{corollary}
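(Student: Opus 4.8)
The backward implication is immediate: if every $c_i=0$ then $f=0$ almost everywhere and $R^*(0)=0$ by linearity of the Ruelle operator. For the forward implication I would argue by contradiction, the goal being to show that a fixed point $f$ which happens to be a simple function must vanish almost everywhere; the plan is to reduce everything to a single application of Proposition \ref{prop.chiA} after rewriting $f$ over pairwise disjoint sets.

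First I would pass to the canonical form of $f$. A simple function takes finitely many values, so let $v_1,\dots,v_m$ be the distinct \emph{nonzero} values of $f$ and put $B_j=f^{-1}(v_j)$, so that $f=\sum_{j=1}^{m}v_j\chi_{B_j}$ with the $B_j$ pairwise disjoint and measurable and every $v_j\neq 0$. Discarding those $B_j$ with $m(B_j)=0$ alters $f$ only on a null set, so I may assume $m(B_j)>0$ for each $j$. Then the assertion ``$f=0$ almost everywhere'' is equivalent to $m=0$, and this is what I would establish.

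Suppose, for contradiction, that $m\geq 1$. Write $f=f'+v_1\chi_{B_1}$ with $f'=\sum_{j=2}^{m}v_j\chi_{B_j}$ (the empty sum when $m=1$, i.e.\ $f'\equiv 0$). Since the $B_j$ are pairwise disjoint, the support of $f'$ is contained in $\bigcup_{j\geq 2}B_j$ and hence disjoint from $B_1$; thus $B_1\setminus supp(f')=B_1$ has positive Lebesgue measure. The hypotheses of Proposition \ref{prop.chiA} are therefore met with the fixed point $g=f$, with the role of its ``$f$'' played by $f'$, its ``$A$'' by $B_1$, and its ``$c$'' by $v_1$. The conclusion of that proposition gives $v_1=0$, contradicting the choice of $v_1$ as a nonzero value of $f$. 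Hence $m=0$, so $f=0$ almost everywhere, and after discarding the null sets among the $A_i$ this is the statement $c_i=0$ for all $i$.

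The argument presents no real obstacle; the only point deserving care is the initial reduction to pairwise disjoint, positive-measure sets, because the sets $A_i$ in the statement are allowed to overlap and to contain null pieces, so that peeling off a single $c_i\chi_{A_i}$ need not immediately produce the disjointness required by Proposition \ref{prop.chiA}. Once $f$ is expressed over such sets, a single invocation of Proposition \ref{prop.chiA} — rather than an induction on $m$ — suffices, since removing one value already forces the contradiction.
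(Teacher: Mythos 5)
Your proof is correct and follows exactly the route the paper intends: the paper states this as an ``immediate'' consequence of Proposition \ref{prop.chiA} and gives no further argument, and your reduction to the canonical form $f=\sum v_j\chi_{B_j}$ over pairwise disjoint positive-measure level sets, followed by a single application of that proposition to peel off one nonzero value, is the natural way to make ``immediate'' precise. The only (harmless) looseness is the final passage from ``$f=0$ almost everywhere'' back to ``$c_i=0$ for all $i$,'' which is an imprecision already present in the statement of the corollary when the $A_i$ are allowed to overlap.
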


Let us show that for any infinite closed set $K$ in $\overline{\C}$, the space $H(K)$
always contains a non-regular function, even in the case when $K$ has zero 
Lebesgue measure. On the other hand $H(K)$ contains characteristic functions if 
and only if $K$ has positive measure.  Nevertheless,  by Lemma \ref{lemma.dichotomy}, Proposition \ref{prop.invariantmeasure} and Theorem \ref{th.MeanErg.1}, if the postcritical 
set has measure zero then any fixed point of the Ruelle operator is necessarily a 
regular function. In general we conjecture that any fixed point of the Ruelle 
operator is necessarily a regular function. In the last section we will discuss 
the existence of fixed points when the postcritical set has positive measure.

\begin{proposition}\label{prop.nonregular}
 Let $K$ be an infinite closed subset of $\overline{\C}$, then $H(K)$ contains 
 non-regular functions.
\end{proposition}

\begin{proof}
It is enough to show the proposition when $K$ is a bounded infinite closed subset of $\C$.
Otherwise we can compose with a M\"obius map. For $\mu\in L_\infty(\C)$ the assignment
$\mu\mapsto F_\mu|_K$ defines a continuous compact operator $S:L_\infty(\C)\rightarrow C(K)$ (see Theorem 7 of Chapter 3 page 56 of \cite{GardLakic}). Then the dual operator $S^*:M(K)\rightarrow L^*_\infty(\C)$ defined 
on the space $M(K)$ of all finite complex valued measures with total variation as norm is compact also. 
By direct computation we have that if $\nu\in M(K)$ then 
$$S^*(\nu)(z)=\int \gamma_a(z)d\nu(a).$$ Hence by Fubini's theorem 
the image of $S^*$ belongs to $H(K).$ If $U\in M(K)$ is the closed unit ball, then 
$S^*(U)$ is closed in $H(K)$.  

If  every element in $H(K)$ is regular so $H(K)=\bigcup_{n\geq 0}(S^*(nU))$ then by Baire's 
category theorem there exists  $n_0$ such that $S^*(n_0U)$ has non-empty interior. But, by the
compactness of $S^*$ any ball inside $S^* (n_0U)$ is finitely dimensional which implies that 
$H(K)$ has finite dimension and hence $K$ is finite. This is a contradiction. \end{proof}

We endow the space $Hol(R)$ with two non-complete norms. 
The first is given by $|f|_1=\int_{F_R} |f|$
and the second by $|f|_2=\int_{J_R} |f|$ for $f\in Hol(R)$.
Let us call $Hol_1$ and $Hol_2$ the respective normed spaces.  If the Fatou
set is empty then $Hol_1=0.$ Similarly, $Hol_2=0$ when  the Julia set  has measure $0$.
The operator $R^*$ is a contraction on each space. 
 
Next we show that any compatibility between these two 
topologies on $Hol(R)$ gives rise to a sort of rigidity of the dynamics of 
$R$. 

\begin{proposition}\label{prop.contId}
Fix a rational map $R$ with $F_R\neq \emptyset$. 
 
\begin{enumerate}
\item If the identity map $Id:Hol_1 \to Hol_2$ is
continuous, then there are no fixed points of the 
Beltrami operator on the Julia set $J_R$.

\item If the Fatou set $F_R$ admits an invariant Beltrami differential 
defining a non-zero functional on $Hol_1$, then $Id:Hol_2 \to Hol_1$ is 
continuous if and only if $J_R$ has Lebesgue measure zero. 
\end{enumerate}
\end{proposition}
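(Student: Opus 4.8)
The plan is to exploit the duality between the Ruelle operator $R^*$ acting on the relevant $L_1$-space and the Beltrami operator $B_R$ acting on the corresponding $L_\infty$-space, together with the Mean Ergodicity lemma and the Separation principle of Section~\ref{section4}. For part (1), suppose $Id:Hol_1\to Hol_2$ is continuous, i.e. there is $C>0$ with $\|\phi\|_{L_1(J_R)}\le C\|\phi\|_{L_1(F_R)}$ for all $\phi\in Hol(R)$. Assume, for contradiction, that there is a nonzero invariant Beltrami differential $\mu$ supported on $J_R$. By Lemma~\ref{lemma.fixed.Beltrami}, after possibly choosing a critical value $v_0$, the functional $l_\mu$ is nonzero on $Hol(R)$, and we may take $\phi=\gamma_{v_0}\in Hol(R)$ with $l_\mu(\phi)\ne 0$. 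First I would observe that $l_\mu$ is a continuous functional on $Hol_2$ (since $\mu$ is supported on $J_R$ and bounded), and that it is $R^*$-invariant. The continuity of $Id:Hol_1\to Hol_2$ means every $\|\cdot\|_{Hol_2}$-continuous functional is also $\|\cdot\|_{Hol_1}$-continuous; so $l_\mu$ descends to a nonzero invariant functional on $Hol_1$. On the other hand, by the Remark following Theorem~\ref{th.MeanErg.1} (more precisely, by the Separation principle applied on the Fatou side, or directly by the dissipativity of wandering Fatou components together with Lemma~\ref{lm.con.dis}), the Ces\`aro averages $A_n(R^*)(\phi)$ converge to $0$ in the $L_1(F_R)$-norm whenever the Fatou part carries no nontrivial quasiconformal deformation; and when it does carry one, the corresponding contribution is detected only on the Fatou set, not by $l_\mu$. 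Hence $l_\mu(A_n(R^*)\phi)\to 0$ in $Hol_1$, contradicting $l_\mu(A_n(R^*)\phi)=l_\mu(\phi)\ne 0$ by invariance. This forces $\mu=0$, proving part (1).

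For part (2), assume $F_R$ admits an invariant Beltrami differential $\mu_0$ with $l_{\mu_0}\ne 0$ on $Hol_1$. For the ``if'' direction, suppose $m(J_R)=0$. Then $L_1(J_R)$ is trivial on $Hol(R)$, so the $Hol_2$-norm is identically zero and $Id:Hol_2\to Hol_1$ is trivially... no --- here one must be careful: if $m(J_R)=0$ then $\|\cdot\|_{Hol_2}=0$ and the statement ``$Id:Hol_2\to Hol_1$ is continuous'' fails unless $Hol_1=0$ too. So instead the correct reading is that $m(J_R)=0$ is equivalent to the negation, or that the roles are as follows: when $m(J_R)=0$, the completion of $Hol(R)$ in the $Hol_1$-norm is $A(S_R)$ (by the Bers Approximation theorem, as recalled before Proposition~\ref{prop.Krengel}), and the identity $Hol_2\to Hol_1$ being bounded would force, by duality and the presence of the nonzero invariant functional $l_{\mu_0}$ on $Hol_1$, a nonzero invariant functional on $Hol_2$, i.e. an invariant Beltrami differential supported on $J_R$; but then running the dichotomy of Lemma~\ref{lemma.dichotomy} and Proposition~\ref{prop.invariantmeasure} together with $m(J_R)=0$ gives a contradiction with $m(\mathrm{supp})>0$ required by Proposition~\ref{prop.invariantmeasure}, unless $R$ is a flexible Latt\`es map --- which is excluded since flexible Latt\`es maps have $J_R=\bar\C$. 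Conversely, if $m(J_R)>0$, I would construct explicitly a sequence $\phi_j\in Hol(R)$ with $\|\phi_j\|_{Hol_2}\to 0$ but $\|\phi_j\|_{Hol_1}$ bounded below, using characteristic-function-type approximations supported near $J_R$ (legitimate since $m(J_R)>0$, cf. the discussion after Corollary~4.11), or alternatively a normal-families/weak-compactness argument: a bounded sequence in $A(S_R)$ restricted to $J_R$ that degenerates there while staying nonzero on $F_R$, which exists precisely because $m(J_R)>0$ decouples the two restrictions. This shows $Id:Hol_2\to Hol_1$ is discontinuous, completing the equivalence.

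The main obstacle I anticipate is the careful bookkeeping in part (2): making precise the equivalence ``$Id:Hol_2\to Hol_1$ continuous $\iff m(J_R)=0$'' when one of the two spaces may be degenerate, and constructing the explicit witness sequence when $m(J_R)>0$ without appealing to characteristic functions that do not lie in $Hol(R)$ (recall $Hol(R)$ consists of genuine rational functions). The cleanest route is probably to pass to completions --- using that the $Hol_1$-completion is $A(S_R)$ or $H(K_R)$ and the $Hol_2$-completion is the corresponding holomorphic $L_1(J_R)$-space --- and then argue at the level of the completed Banach spaces via the Separation principle and the duality $B_R = (R^*)^*$, so that continuity of the identity transfers invariant functionals across and is contradicted by the dissipative-set vanishing of Ces\`aro averages (Lemma~\ref{lm.con.dis}) on one side and their non-vanishing (detected by $l_{\mu_0}$) on the other. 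The Latt\`es case must be handled separately throughout, but it is benign because there $F_R=\emptyset$, so the hypothesis $F_R\ne\emptyset$ already rules it out.
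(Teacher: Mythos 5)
Your opening move --- using continuity of the identity to transport the invariant functional $l_\mu$ from one of the normed spaces $Hol_i$ to the other --- is exactly the paper's first step. But the mechanism you use to derive a contradiction afterwards does not work. In part (1) you claim $l_\mu(A_n(R^*)\phi)\to 0$ because the Ces\`aro averages vanish in the $L_1(F_R)$-norm on the dissipative Fatou set. This is false in general: whenever $F_R$ contains an attracting, parabolic or superattracting cycle, part (1) of Lemma \ref{lemma.fixed.Beltrami} produces an invariant Beltrami differential supported on the Fatou set pairing nontrivially with some $\gamma_{v_0}$, and the corollary following Theorem \ref{th.MeanErg.1} shows precisely that $R^*$ is then \emph{not} mean ergodic there; so $A_n(\gamma_{v_0})$ does not tend to $0$ in $L_1(F_R)$. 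Lemma \ref{lm.con.dis} only gives pointwise a.e.\ convergence on the dissipative set, not norm convergence of Ces\`aro averages, and your fallback clause (``the corresponding contribution is detected only on the Fatou set, not by $l_\mu$'') assumes the conclusion: after the transfer, $l_\mu$ \emph{is} an $L_1(F_R)$-continuous functional, so it is detected on the Fatou set. What the paper actually does at this point is: represent the transferred functional by some $\nu\in L_\infty(F_R)$ via Hahn--Banach and Riesz, replace $\nu$ by a $*$-weak accumulation point $\nu_\infty$ of its Ces\`aro averages under the \emph{Beltrami} operator (so $\nu_\infty$ is an invariant Beltrami differential supported on $F_R$ that still represents $l_\mu$ on $Hol(R)$), and then observe that the potentials $\int\gamma_a\nu_\infty$ and $\int\gamma_a\mu$ coincide on $P_R$. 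The contradiction comes from Theorem 3 of \cite{MakRuelle}, an external rigidity result about such potentials; no ergodic-averaging of the test functions $\phi$ is involved. This averaging-on-the-dual-side step and the appeal to Theorem 3 of \cite{MakRuelle} are the missing ideas, and without them the argument has no valid endgame.

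Part (2) has the same gap plus an additional structural problem. The ``if'' direction is trivial (when $m(J_R)=0$ the space $Hol_2$ reduces to $\{0\}$), and your discussion of it is tangled. For the substantive direction the paper does \emph{not} construct a witness sequence violating continuity; it assumes continuity of $Id:Hol_2\to Hol_1$ together with $m(J_R)>0$, transfers the hypothesized nonzero invariant functional $l_{\mu_0}$ from $Hol_1$ to $Hol_2$, upgrades its Riesz representative to an invariant Beltrami differential supported on $J_R$ by the same averaging trick, and again contradicts Theorem 3 of \cite{MakRuelle}. Your proposed explicit sequence with $\|\phi_j\|_{Hol_2}\to 0$ and $\|\phi_j\|_{Hol_1}$ bounded below cannot be built from ``characteristic-function-type approximations'': $Hol(R)$ consists of rational functions with simple poles on the forward orbit of $V(R)\cup\{0,1\}$, and in any case such a construction would make no use of the hypothesis on $\mu_0$, which is essential to the statement (without an invariant differential on $F_R$ there is nothing to transfer and no contradiction to reach).
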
 
\begin{proof}
For the first part, suppose that there is a non-zero fixed point $\mu$ 
of the Beltrami operator on the Julia  set. 
Then $\mu$ defines a non-trivial invariant continuous linear functional $l_\mu$ 
on $Hol_2$. 
Since $Id$ is a continuous operator, the functional 
$l(\alpha)=l_\mu(Id(\alpha))$ is a continuous invariant linear 
functional on $Hol_1$ and hence extends to the completion 
$\overline{Hol_1}\subset L_1(F_R)$ of $Hol_1$ with respect to its norm. 
By the Hahn-Banach and Riesz representation theorems 
there exists a function $\nu\in L_\infty(F_R)$ such that  
$$
l(\phi)=\int _{F_R} \phi(z) \nu(z) |dz|^2
$$
for all $\phi \in \overline{Hol_1}$.   

As in the proof of Proposition \ref{prop.inducedinv}, we can assume  that  $\nu$ is a fixed point 
of the Beltrami operator supported on the Fatou set. 

In particular, for $\gamma_a\in Hol(R)$ with $a\in 
P_R,$ the continuous functions $F_{\nu_\infty}(a)=\int_\C 
\gamma_a(z)\nu_\infty(z) 
|dz|^2$ and 
$F_{\mu}(a)=\int_\C \gamma_a(z)\mu(z) |dz|^2$ coincide
on the orbit of all critical values, and hence on $P_R.$ This however 
contradicts Theorem 3 of \cite{MakRuelle} using an analogous argument to the proof of 
Proposition \ref{prop.inducedinv}. 

For the second part notice that if the Julia set has measure zero, 
then $Hol_2$ consists only of the $0$ function. 
So, assume that the Lebesgue measure of the Julia set is not zero. 
Let $\mu$ a fixed point of Beltrami operator with 
$$
l_\mu(\phi)=\int_{F_R} \phi(z) \mu(z)|dz|^2 
\neq 0.
$$
Applying the same arguments as in the first part, we get a 
continuous linear functional on $Hol_2$ defined by a fixed point of  
Beltrami operator now supported on the Julia set. 
This again by an analogous argument
of Proposition \ref{prop.inducedinv} gives a contradiction, hence 
the Lebesgue measure of $J_R$ is zero. 
\end{proof}

Now define $X=(Id-R^*)(Hol(R))$ and let $X_1$ and $X_2$ be the 
closures of $X$ in the spaces of $Hol_1$ and $Hol_2$, respectively. 
The proof of Theorem \ref{th.MeanErg.1} shows that 
if there is no invariant Beltrami differential supported on a Julia set of 
positive measure, then we get $X_2=Hol_2$ which coincides with $Hol(R)$ as a set and hence also 
$X_1\subset X_2$. We will prove the converse in Proposition 
\ref{pr.fixedpointBelt} below. First we need a technical result. 

\begin{lemma}\label{lemma.linearfunc} 
Let $l$ be a  linear functional on $Hol(R)$. If $ X_1 \subset ker(l)$ then 
$l$ is continuous on $Hol_1$. 
\end{lemma}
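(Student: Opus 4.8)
The statement is that a linear functional $l$ on $Hol(R)$ whose kernel contains $X_1 = \overline{(Id-R^*)(Hol(R))}^{\,Hol_1}$ is automatically continuous in the $Hol_1$-norm. The plan is to produce, by a fixed-point/averaging argument, an $R^*$-invariant linear functional on $Hol(R)$ that agrees with $l$, and then to represent that invariant functional by an invariant Beltrami differential on the Fatou set, which is bounded and hence gives $Hol_1$-continuity. First I would observe that, since $X = (Id-R^*)(Hol(R)) \subset \ker l$, the functional $l$ factors through the quotient and in particular satisfies $l(R^*\phi) = l(\phi)$ for every $\phi \in Hol(R)$; that is, $l$ is already $R^*$-invariant on the nose. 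Consequently $l(A_n(R^*)(\phi)) = l(\phi)$ for all $n$.

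Next I would use the geometry of $S_R$ and $Hol_1$. The key point, recalled just before Theorem \ref{th.MeanErg.1} and in the Remark following it, is that the Ces\`aro averages $A_n(R^*)(\phi)$ behave well with respect to the $L_1$-norm over the Fatou set: on $Hol_1$ the relevant part of the orbit lives in a space with good compactness, and the sequence $A_n(R^*)(\gamma_v)$ (and, by linearity, $A_n(R^*)(\phi)$ for general $\phi \in Hol(R)$) is weakly precompact when restricted to the Fatou set. I would pass to a weak (or $*$-weak, via the Bers identification of $A^*(S_R)$ with $B(S_R)$ and the spaces listed in Section \ref{sec.Thurs}) accumulation point $\Phi$ of $\{A_n(R^*)(\phi)\}$ inside the completion of $Hol_1$; by the Mean Ergodicity lemma such an accumulation point is a fixed point of $R^*$ supported on the Fatou set. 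Since $l(A_n(R^*)(\phi)) = l(\phi)$ for all $n$ and (after checking $l$ extends continuously along the relevant subsequence — this is where I would use $X_1 \subset \ker l$ again, to control $l$ on differences $A_{n_i}(R^*)(\phi) - A_{n_j}(R^*)(\phi)$, which lie in $X$) we get that the value $l(\phi)$ is determined by its value on the fixed vector $\Phi$.

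From here I would build a bounded representing density. Either directly, by a Hahn–Banach extension of $l$ from $Hol_1$ — which is legitimate once we know $l$ is bounded on the image of the averaging procedure — followed by the Riesz Representation theorem to obtain $\nu \in L_\infty(F_R)$ with $l(\phi) = \int_{F_R}\phi\,\nu\,|dz|^2$; or, alternatively, by running the $*$-weak Ces\`aro averaging of $\nu$ under the Beltrami operator exactly as in the proof of Proposition \ref{prop.contId}, to arrange that $\nu$ is an invariant Beltrami differential supported on $F_R$. In either case $\|\nu\|_\infty < \infty$ immediately yields $|l(\phi)| \le \|\nu\|_\infty \,\|\phi\|_{Hol_1}$, which is the desired continuity.

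The main obstacle is the circularity lurking in the middle step: to extend $l$ by Hahn–Banach (and then invoke Riesz) one already needs some a priori boundedness of $l$ on a dense or norm-controlling subset, and that is precisely what must be extracted from the sole hypothesis $X_1 \subset \ker l$ together with the weak compactness of Ces\`aro averages over the Fatou set. The delicate work is therefore to show that $l$, restricted to the norm-closed span of the sets $\{A_n(R^*)(\phi) : n\}$ together with $X_1$, is bounded — using that $l$ kills $X_1$ and that $A_n(R^*)(\phi)$ stays in a bounded, weakly relatively compact piece of $\overline{Hol_1}$, so that the residual "fixed part" has norm comparable to $\|\phi\|_{Hol_1}$. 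Once that quantitative bound is in hand, the representation by an $L_\infty$ density on the Fatou set and hence $Hol_1$-continuity follow routinely.
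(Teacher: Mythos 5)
Your proposal has a genuine gap, and you have in fact put your finger on it yourself: the entire strategy of representing $l$ by an $L_\infty$ density on the Fatou set via Hahn--Banach and Riesz presupposes that $l$ is bounded, which is the conclusion to be proved. The ``delicate work'' you defer --- showing that $l$ is bounded on the norm-closed span of the Ces\`aro orbits together with $X_1$ --- is not routine and is not supplied by weak compactness of the averages: since $l$ is a priori only linear, it need not respect weak limits, and the identity $l(A_n(R^*)\phi)=l(\phi)$ gives no control of $|l(\phi)|$ in terms of $\|\phi\|_{Hol_1}$. Indeed, a linear functional vanishing on a closed subspace of \emph{infinite} codimension can perfectly well be discontinuous (extend by a Hamel basis on a complement), so no amount of averaging and invariance alone can close the argument; some additional structural input about $X_1$ inside $Hol_1$ is indispensable.

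The missing idea is that $X_1$ has \emph{finite codimension} in $Hol_1$. The paper proves this using the explicit formula from Lemma 5 of \cite{MakRuelle}, namely $R^*(\gamma_v)=\frac{1}{R'(v)}\gamma_{R(v)}+w$ with $w$ in the finite-dimensional space $W$ spanned by the $\gamma_v$ over critical values $v$. Since $R^*(\gamma_v)-\gamma_v\in X_1$, one gets $\gamma_{R(v)}\in X_1+W$, and by induction (using $R^*$-invariance of $X_1$) every $\gamma_{v_n}$ with $v_n=R^n(v)$ lies in $X_1+W$; as $Hol(R)$ is spanned by these, $Hol_1=X_1+W$. Then any linear $l$ with $X_1\subset\ker(l)$ factors through the finite-dimensional quotient $Hol_1/X_1$, where every linear functional is automatically continuous, and since $X_1$ is closed the quotient map is continuous, so $l$ is continuous. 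This is a purely algebraic finite-codimension argument and requires none of the ergodic or representation-theoretic machinery you invoke; your route, as written, cannot be completed without importing this fact (or an equivalent one) anyway.
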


\begin{proof}
Let $W$ be the finite dimensional space of all linear combinations of the functions
$\gamma_v$, with $v$ a critical value of $R$. 
We will show first that $Hol_1$ equals the sum $X_1 + W$; note that $X_1\cap 
W$ may be non-zero. Indeed,  by definition the space $Hol_1$ is the linear 
span of $\gamma_a(z)$ where $a$ is an element in the union of the forward 
orbits of all critical values. For every critical value $v$ of $R$, by Lemma 5 
of \cite{MakRuelle}, we have 
$$
R^*(\gamma_v)=\frac{1}{R'(v)}\gamma_{R(v)}+w
$$ 
for some $w$ in $W$. Since $R^*(\gamma_v)-\gamma_v\in X_1$ and 
$-w+\gamma_v \in W$, the element $\gamma_{R(v)}$ 
is a sum of elements in $X_1$ and $W$. But $X_1$ is 
invariant under $R^*$, so by an induction argument we conclude that
$\gamma_{v_n}\in X_1+ W$ for every $v_n=R^n(v)$. 
Therefore the space $X_1$ has finite codimension on $Hol_1$.  

If $X_1\subset \ker(l)$ then $l$ projects to a linear functional 
${\cal L}$ defined on the finite dimensional space $Hol_1 /X_1$. 
As $X_1$ is closed this implies that both ${\cal L}$ and $l$ are 
continuous.\end{proof}

\begin{proposition} \label{pr.fixedpointBelt}
Suppose that the Julia set $J_R\neq \overline{\C}$ and has positive Lebesgue 
measure. Then,  the only invariant  Beltrami differential supported on $J_R$ is zero
if and only if $X_1\subset X_2$.
\end{proposition}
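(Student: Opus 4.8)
The plan is to run the argument along the lines of the proof of Theorem \ref{th.MeanErg.1} in one direction and Lemma \ref{lemma.linearfunc} in the other. For the forward direction, assume there is no non-zero invariant Beltrami differential supported on $J_R$. Then, exactly as in Theorem \ref{th.MeanErg.1}, the Beltrami operator on $L_\infty(J_R)$ has no fixed points, so by Hahn--Banach and the Riesz representation theorem $(Id-R^*)(L_1(J_R))$ is dense in $L_1(J_R)$; restricting to the subspace $Hol(R)$ and using the definition of the $Hol_2$ norm (the $L_1$ norm over $J_R$), one gets that $A_n(R^*)(\phi)\to 0$ in $Hol_2$ for every $\phi\in Hol(R)$, which by part (2) of the facts in Section \ref{section4} means $Hol(R)=\overline{(Id-R^*)Hol(R)}^{\,Hol_2}=X_2$. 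Since trivially $X_1\subset Hol(R)$, this gives $X_1\subset X_2$.

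For the converse, assume $X_1\subset X_2$ and suppose, for contradiction, that there is a non-zero invariant Beltrami differential $\mu$ supported on $J_R$. By Lemma \ref{lemma.fixed.Beltrami}(2) there is a critical value $v_0$ with $\int_\C \mu(z)\gamma_{v_0}(z)|dz|^2\neq 0$, and $\mu$ defines a non-zero invariant continuous linear functional $l_\mu$ on $L_1(J_R)$, hence on $Hol_2$. In particular $X_2\subset \ker(l_\mu)$, so by hypothesis $X_1\subset \ker(l_\mu)$ as well. Now I would invoke Lemma \ref{lemma.linearfunc}: since $X_1$ is contained in the kernel of the (a priori only linear) functional $l_\mu$, that lemma shows $l_\mu$ is in fact continuous on $Hol_1$, i.e. continuous with respect to the $L_1$ norm over the Fatou set. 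Then $l_\mu$ extends to $\overline{Hol_1}$ and, by Hahn--Banach and Riesz, is represented by some $\nu\in L_\infty(F_R)$; running the Ces\`aro-averaging argument from the proof of Proposition \ref{prop.contId}(1) verbatim, we may take $\nu$ to be a fixed point of the Beltrami operator supported on the Fatou set. Then the continuous functions $F_\nu(a)=\int_\C \gamma_a(z)\nu(z)|dz|^2$ and $F_\mu(a)=\int_\C \gamma_a(z)\mu(z)|dz|^2$ agree on the forward orbit of every critical value, hence on $P_R$, and since $\mu$ is supported on $J_R$ while $\nu$ is supported on $F_R$ this forces a contradiction with Theorem 3 of \cite{MakRuelle} (exactly as in Proposition \ref{prop.contId}). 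Hence no such $\mu$ exists.

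The main obstacle is the converse direction, and within it the key leverage point is the application of Lemma \ref{lemma.linearfunc}: one must be careful that the hypothesis $X_1\subset X_2$ is used precisely to pull the containment $X_2\subset \ker(l_\mu)$ back to $X_1\subset\ker(l_\mu)$, after which the finite-codimensionality of $X_1$ in $Hol_1$ (established in that lemma via $R^*(\gamma_v)=\frac{1}{R'(v)}\gamma_{R(v)}+w$) automatically upgrades a linear functional to a continuous one. The only remaining subtlety is making sure the Ces\`aro-averaging trick genuinely lands $\nu$ on the Fatou set, but this is identical to the computation already carried out in Proposition \ref{prop.contId}(1) and needs no change. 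The positivity-of-measure and $J_R\neq\bar\C$ hypotheses enter only to guarantee that $Hol_2$ and $Hol_1$ are both non-trivial so the dichotomy of Lemma \ref{lemma.dichotomy} does not collapse the statement.
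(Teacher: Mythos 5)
Your proposal is correct and follows essentially the same route as the paper: the forward direction reuses the density argument from Theorem \ref{th.MeanErg.1} to get $X_2=Hol(R)\supset X_1$, and the converse pulls $X_2\subset\ker(l_\mu)$ back to $X_1\subset\ker(l_\mu)$, applies Lemma \ref{lemma.linearfunc} to get continuity on $Hol_1$, and then runs the Ces\`aro-averaging contradiction from Proposition \ref{prop.contId} against Theorem 3 of \cite{MakRuelle}. You even make explicit a step the paper leaves implicit, namely that the hypothesis $X_1\subset X_2$ is exactly what licenses the application of Lemma \ref{lemma.linearfunc}.
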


\begin{proof}
If there is no invariant Beltrami differential supported on the Julia set, 
then, as in Theorem \ref{th.MeanErg.1},  $X_2=Hol_2$. Since the Lebesgue measure of $J_R$ is positive then  $Hol_2$ coincides as a set with $Hol(R)$. In other words, every element $\omega \in Hol(R)$ can be approximated by elements of the form $\alpha_i-R^*(\alpha_i)$ with $\alpha_i\in Hol(R)$ in the $L_1(J_R)$ and hence $X_2$ contains $X_1$. 

On the other direction, assume $X_1\subset X_2$. As in Lemma 
\ref{lemma.linearfunc}, we have $Hol(R)=X_1+F$ for some 
finite dimensional vector space $F\subset W$. For an invariant Beltrami 
differential $\mu$ supported on the Julia set, the assignment 
$l_\mu(\phi)=\int_{J_R} \phi(z) \mu(z)|dz|^2$ defines an invariant 
linear continuous 
functional on $Hol_2$ for which we have $X_2\subset ker(l_\mu)$.   
Lemma \ref{lemma.linearfunc} shows that $l_\mu$ is 
continuous on $Hol_1$, so by the Riesz representation theorem 
there is a function $\nu\subset L_\infty(F_R)$ so that 
$l_\mu(\phi)=\int_{F_R}\phi(z)  \nu(z) |dz|^2$. 
However, since $\mu$ is invariant, using analogous arguments 
to the proof of Proposition \ref{prop.contId}, we again get a contradiction 
to Theorem 3 of \cite{MakRuelle}. \end{proof}

\section{Action on $L_p$ spaces}

If we want to extend the theory to $L_p(K)$ spaces (with $K$ completely 
invariant), we need to modify somehow the operators in an \textit{ad-hoc} 
manner. In the formulas below, $d$  denotes the degree of $R$. 
Let $p,q$ be such that $1/p+1/q=1$. 

The action of $R$ by \textit{pull-back} on $L_p$ is given 
by 
$$
R_{*p}(\phi)=\frac{1}{\sqrt[p]{d}}(\phi\circ R)|R'|^{\frac{2}{p}}\frac{R'}{\overline 
{R'(z)}}, 
$$
when $\phi \in L_p$. 

Similarly, the \textit{push-forward} action of $R$ on $L_q$ (for $1< q < 
\infty$)
is defined by
$$ 
R^{*}_{q}(\phi)=\frac{1}{\sqrt[p]{d}}\sum 
\phi(\zeta_i)\frac{\zeta_i'}{\overline{\zeta_i '}}|\zeta_i'|^{\frac{2}{q}},
$$ 
where the sum is taken over all branches $\zeta_i$ of $R^{-1}$, 
that is, they satisfy $R(\zeta_i(z))=z$ for almost all $z$. 
The constants are suitably chosen so that $R^{*}_{p}$ and $R_{*q}$ are 
mutually dual. Indeed, if $\phi\in L_p$ then for any $g\in L_q$, we have $$\int 
R_{*p}(\phi(z))\overline{g}(z)\,|dz|^2=\frac{1}{\sqrt[p]{d}}\int 
\phi(R(z))|R'(z)|^{\frac{2}{p}}\frac{\overline{R'(z)}}{R'(z)}\overline{g}(z)\,
|dz|^2$$ 
and, after changing variables with $R(z)=t$, is equal to 
$$\frac{1}{\sqrt[p]{d}}\int 
\phi(t)
\left[\sum_{i=0}^d 
\left(\overline{g}|R'|^{\frac{2}{p}}\frac{\overline{R'}}{R'}\right)
(\zeta_i(t))|\zeta'_i(t)|^2\right] |dt|^2$$ for all branches 
$\zeta_i$ of 
$R^{-1}.$ By direct calculation, for every $i$ we have 
$$\left(|R'|^{\frac{p}{2}} 
\frac{\overline{R'}}{R'}\right)(\zeta_i)|\zeta'_i|^2=|\zeta'_i|^\frac{2}{q} 
\frac{\zeta'_i}{\overline{\zeta_i'}} .$$  
Hence the previous expression is equal to $$\int \phi(t) 
\overline{\left[R^*_q(g)\right](t)}|dt|^2.$$ 
Moreover, we have $R^*_p\circ R_{*p}=Id$ on $L_p$. 

We have now two continuous families of contractions 
depending on $p >0$, which are mutually dual for 
$p\ge1$ and includes the Ruelle and Beltrami 
operators, for $p=1$ and $p=\infty$, respectively.  

The next theorem is the $L_p$ version of the action 
of Ruelle operators (compare with the previous section). 
Unfortunately, for $1<p< \infty$,  the Ruelle operator on $L_p$
does not detect whether there is an invariant Beltrami differential without 
an associated fixed point for the Ruelle operator.

\begin{theorem}\label{thm.lpcase}
Let $K$ be a completely invariant set of positive measure. Given $p$  with $1<p<\infty$, then the operator $R^*_p$  has a fixed point in $L_p(K)$ 
if and only if $R$ is a flexible Latt\`es map.
\end{theorem}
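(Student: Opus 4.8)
The plan is to attach to a fixed point $f$ of $R^*_p$ the finite measure $|f|^p\,|dz|^2$, to show that it must be the measure of maximal entropy $\mu_R$ of $R$, and then to read off ``flexible Lattès'' from classical rigidity. For the forward implication, suppose $R^*_p f=f$ with $0\neq f\in L_p(K)$ and $1<p<\infty$. Extending $f$ by $0$ and using that $K$ is completely invariant (so $R(K)=K$ as well), I may assume $K=\bar\C$; write $c_p=d^{1/p-1}$ for the constant in front of $R^*_p$. First I would extract a pointwise Hölder estimate: from
$$|f(z)|=\Big|c_p\sum_i f(\zeta_i(z))\tfrac{\zeta_i'(z)}{\overline{\zeta_i'(z)}}\,|\zeta_i'(z)|^{2/p}\Big|\le c_p\sum_i|f(\zeta_i(z))|\,|\zeta_i'(z)|^{2/p}$$
and Hölder's inequality with exponents $p,q$ over the $d$ branches (the relevant constant being exactly $c_p\,d^{1/q}=1$) one obtains $|f|^p\le|R^*|(|f|^p)$ a.e. Integrating over $K$ and using $\int_{R^{-1}(K)}|f|^p=\int_K|f|^p$ forces $|R^*|(|f|^p)=|f|^p$ a.e., so $\nu:=|f|^p\,|dz|^2$ is a finite $R$-invariant measure, absolutely continuous with respect to Lebesgue. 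The crucial extra point, available only because $1<p<\infty$, is that equality in the Hölder step forces $|f(\zeta_i(z))|^p|\zeta_i'(z)|^2$ to be independent of $i$, hence equal to $|f(z)|^p/d$: the measure $\nu$ is \emph{balanced}, i.e.\ its mass over a small ball is equidistributed among the $d$ preimages.

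Next I would invoke the characterization of the maximal measure: a finite, non-atomic, $R$-invariant, balanced measure is, after normalization, the measure of maximal entropy $\mu_R$ (the unique non-atomic measure with $\mu(R(B))=d\,\mu(B)$ on Borel sets where $R$ is injective). Hence $\mu_R$ is absolutely continuous with respect to planar Lebesgue measure and $\operatorname{supp}\mu_R=J_R$ has positive area. By Zdunik's rigidity theorem, together with the classification of rational maps with parabolic orbifold, the only rational maps with absolutely continuous maximal measure supported on a set of positive area are the Lattès maps (for power maps and Chebyshev maps the Julia set has zero planar measure). So $R$ is a Lattès map, $J_R=\bar\C$, and $\operatorname{supp}f=\bar\C$ up to a null set. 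Finally, equality in the triangle inequality above forces every summand $f(\zeta_i(z))\tfrac{\zeta_i'(z)}{\overline{\zeta_i'(z)}}$ to have the argument of $f(z)$; using $\zeta_i'(z)=1/R'(\zeta_i(z))$ this becomes the functional equation $\mu(w)=\mu(R(w))\overline{R'(w)}/R'(w)$ for $\mu:=\bar f/|f|$, so $\mu$ is a non-trivial invariant Beltrami differential with $|\mu|=1$ a.e.\ on $J_R=\bar\C$, i.e.\ an invariant line field on the Julia set. Since a Lattès map carrying an invariant line field is flexible (Milnor \cite{MilLat}), $R$ is a flexible Lattès map.

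For the converse, let $R$ be a flexible Lattès map, with $\Theta\circ L=R\circ\Theta$ for an affine $L(w)=aw+b$ on a torus and $a\in\mathbb Z$. Then $\mu_R=\rho_0\,|dz|^2$ is absolutely continuous with $\rho_0>0$ a.e.\ (it is the $\Theta$-pushforward of the flat measure) and is balanced, so $\rho_0(\zeta_i(z))|\zeta_i'(z)|^2=\rho_0(z)/d$ a.e.; and, $a$ being real, the constant unit line field on the torus is $L$-invariant and descends through $\Theta$ to an $R$-invariant Beltrami differential $\mu$ with $|\mu|=1$ a.e. I would then check that $f:=\rho_0^{1/p}\,\bar\mu$ is the desired fixed point: $|f|^p=\rho_0\in L_1$, so $f\in L_p(\bar\C)=L_p(K)$ (a completely invariant set of positive measure is of full measure, since $R$ is ergodic) and $f\neq 0$; and using $\mu(\zeta_i(z))=\mu(z)\,\zeta_i'(z)/\overline{\zeta_i'(z)}$ (invariance plus the chain rule) together with the balanced identity,
$$R^*_p f(z)=c_p\,\bar\mu(z)\sum_i\big(\rho_0(\zeta_i(z))|\zeta_i'(z)|^2\big)^{1/p}=c_p\,d^{\,1-1/p}\,\rho_0(z)^{1/p}\,\bar\mu(z)=f(z),$$
since $c_p\,d^{\,1-1/p}=1$.

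The main obstacle is the forward implication, and within it the recognition that for $1<p<\infty$ the fixed-point equation forces the associated absolutely continuous invariant measure to be \emph{balanced}, not merely invariant: this strictly-stronger-than-$L_1$ information, read off from the equality case of Hölder's inequality, is exactly what pins the measure down to $\mu_R$ and unlocks the rigidity (Zdunik, plus Milnor's classification of Lattès maps). The remaining work is bookkeeping with the chain rule and the normalizing constant $c_p=d^{1/p-1}$; one should also record that the endpoints $p=1,\infty$ are genuinely outside the scope, as the Hölder equality carries no content there — consistent with the fact that the ordinary Ruelle operator ($p=1$) admits many non-regular candidate fixed points.
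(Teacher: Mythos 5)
Your proof is correct, and its overall architecture coincides with the paper's: both reduce the statement to producing a finite absolutely continuous \emph{balanced} invariant measure together with an invariant line field, identify that measure with the measure of maximal entropy (Lyubich/Freire--Lopes--Ma\~n\'e), invoke Zdunik's rigidity plus the fact that power and Chebyshev maps have Julia sets of zero area to get Latt\`es, and then use the line field to get flexibility. The one genuine difference is the first step. The paper dualizes: since $L_p$ is reflexive, a fixed point of $R^*_p$ yields (via mean ergodicity and the separation principle) a fixed point $\psi$ of the pull-back $R_{*q}$ on $L_q(K)$, for which the balanced relation $|\psi|^q=\tfrac{1}{d}|\psi(R)|^q|R'|^2$ and the line field $|\psi|/\psi$ drop out as pointwise identities from the multiplicative form of the pull-back. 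You instead work directly with the push-forward fixed point and extract the same two pieces of data from the equality cases of H\"older's inequality over the $d$ branches (balancedness) and of the triangle inequality (the line field $\bar f/|f|$). Your route avoids the duality/ergodicity step entirely and makes transparent why $1<p<\infty$ is essential (the H\"older equality is vacuous at the endpoints); the paper's route is shorter at the pointwise level but leans on reflexivity to transfer the fixed point to the dual operator. Both converses construct the fixed point from the Latt\`es density and line field; your verification of the normalizing constant $c_p\,d^{1-1/p}=1$ is the more careful of the two.
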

\begin{proof}
If $R^*_p$ has a fixed point then its dual $R_{*q}$ has 
a fixed point on $L_q(K)$. Let $\psi$ be a  fixed point of $R_{*q}$ 
then $\frac{|\psi|}{\psi}$ is an invariant Beltrami differential. 
On the other hand, the function  $f=|\psi|^q$ is integrable and 
satisfies 
$$
f(z)=\frac{f(R(z))|R(z)'|^2}{deg(R)}.
$$ 
In this case $supp(f)$ is completely invariant. By Proposition 
\ref{prop.Krengel} and the discussion afterwards, either $R$ 
is a Latt\`es map or $supp(f) \subset C(R)\cap P_R\subset J_R$. But the latter 
is not possible. Indeed,   let $E$ be the operator $E:C(J_R)\rightarrow C(J_R)$ 
on the space of complex valued continuous functions $\phi$ on $J_R$ 
given by  $$E_R(\phi)(z)= \frac{1}{deg(R)}\sum_i \phi(\zeta_i(z))$$ where 
the sum is taken over all branches $\zeta_i$ of $R^{-1}.$

Let us recall Lyubich's Theorem 5 
in \cite{LyuTypical} which states that every continuous functional invariant with 
respect to $E$ is induced by a multiple of the maximal entropy measure. 

Let $\nu(z)$ be the measure such that $d\nu(z)=f(z) \, |dz|^2$. 
Now, 
$$\int_{J_R} E_R(\phi)d\nu=\frac{1}{deg(R)}\int_{J_R} \left[\sum_i 
\phi(\zeta_i(z))\right] f(z)|dz|^2.$$
By the arguments and computations of Proposition \ref{prop.bounds}, the latter 
is equal to
$$\frac{1}{deg(R)}\int_{J_R} \phi(z)f(R(z))|R'(z)|^2|dz|^2 
=\int_{J_R}\phi d\nu.$$ Then $\nu$ is a multiple of the maximal entropy 
measure. 
By Zdunik's Theorem (see \cite{ZdunikParab}) $R$ is a postcritically finite 
rational map. Hence $f(z)=0$ almost everywhere since 
$supp(f)\subset C(R)\cap P_R$ which is a contradiction. Thus $R$ is a flexible 
Latt\`es map. 

Conversely, if the map $R$ is a flexible Latt\`es map, then there 
exist an integrable function $f_0$ such that $f_0=\frac{f_0(R)(R')^2}{deg 
R}$ and 
$\mu(z)=\frac{\overline{f_0(z)}}{|f_0(z)|}$ is a fixed point of the
Beltrami operator. 
Therefore $\psi=|f_0|^{\frac{1}{p}}\mu$ is a fixed point for $R_{*p}$ on 
$L_p(K)$ and induces a fixed point of the Ruelle operator in $L_p(K)$. 
Since the dual operator satisfies $R^*_q\circ R_{* q}=Id$, the converse follows 
immediately.
\end{proof}
 Note that the theorem above shows that if $R^*_{p_0}$ has a non-zero fixed point in $L_{p_0}(K)$ for some $p_0$, then $R^*_p$ has a fixed point in $L_p(K)$ for all $p$ with $1<p< \infty.$
\section{Fixed points of bidual actions and uniform ergodicity}

In this section, we start with  the following lemma which is a summary 
of results due to H. P. Lotz in \cite{Lotz}.

\begin{lemma}\label{Lotz.lemma}
Let $B$ be a Grothendieck space with the Dunford-Pettis property and let 
$S:B\rightarrow B$ be a power-bounded operator. Then
\begin{enumerate}
 \item If $S$ is mean-ergodic, then $S$ is uniformly ergodic.
 \item If the space $Fix(S^*)$ of fixed points of the dual of $S$ is separable 
then $S$ is uniformly ergodic and the Ces\`aro averages uniformly converge to 
a compact projection.
\end{enumerate}
 
\end{lemma}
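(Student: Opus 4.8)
The plan is to read both statements off the Uniform Ergodicity lemma and the Mean Ergodicity lemma of Section~\ref{section4}, reducing them to the two functional-analytic facts that carry the real weight and which are the content of Lotz's paper \cite{Lotz}: on a Grothendieck space $B$ with the Dunford--Pettis property, (a) $*$-weakly convergent sequences in $B^*$ are weakly convergent, and (b) every weakly compact operator on $B$ carries weakly compact sets to norm compact sets, so its square is compact.

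\textbf{Part (1).} Assume $S$ is power bounded and mean ergodic. By the Mean Ergodicity lemma the strong limit $P=\lim_n A_n(S)$ exists, is the projection onto $Fix(S)$ with kernel $E_0=\overline{(Id-S)B}$, and $E_0$ is $S$-invariant and complemented by $Fix(S)$; hence $E_0$ is again a Grothendieck space with the Dunford--Pettis property. Writing $S_0=S|_{E_0}$ we have $A_n(S)=P+A_n(S_0)(Id-P)$, so it suffices to prove $\|A_n(S_0)\|\to 0$. The operator $Id-S_0$ is injective, because a vector $x\in Fix(S)\cap E_0$ satisfies both $A_n(S)x=x$ and $A_n(S)x\to 0$, hence $x=0$; and it has dense range, namely $(Id-S_0)E_0=(Id-S)B$, which is dense in $E_0$ by definition. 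Therefore, by the Uniform Ergodicity lemma, the relation $\|A_n(S_0)\|\to 0$ is equivalent to $(Id-S_0)E_0$ being closed, equivalently to $1\in\rho(S_0)$. Thus part (1) reduces to the assertion: \emph{on a Grothendieck space with the Dunford--Pettis property, a power bounded operator whose Ces\`aro averages converge strongly to $0$ has $1$ in its resolvent set.}

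Proving this assertion is the main obstacle, and here I would follow Lotz. Arguing by contradiction, if $1\in\sigma(S_0)$ then, the range of $Id-S_0$ being dense but not closed, $1$ lies in the approximate point spectrum, so there are unit vectors $x_n\in E_0$ with $(Id-S_0)x_n\to 0$, whence $A_N(S_0)x_n-x_n\to 0$ as $n\to\infty$ for every fixed $N$. Using (a) one passes to the dual: the strong convergence $A_n(S_0)\to 0$ becomes $*$-weak convergence of $A_n(S_0^*)$, which (a) upgrades to weak convergence, and then the Mean Ergodicity lemma in $E_0^*$ makes $S_0^*$ mean ergodic with strong limit $0$. Using (b) — the relevant tails of the $A_N(S_0)$ act essentially as compact operators — a diagonal argument against the approximate eigenvectors $x_n$ produces a genuine unit fixed vector of $S_0$, contradicting $Fix(S_0)=\{0\}$. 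Granting $1\in\rho(S_0)$, the identity $A_n(S_0)=\tfrac1n(Id-S_0)^{-1}(Id-S_0^n)$ together with power boundedness gives $\|A_n(S_0)\|\to 0$, hence $A_n(S)\to P$ in operator norm: $S$ is uniformly ergodic.

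\textbf{Part (2).} Here one uses the separability of $Fix(S^*)$ to force that space down to finite dimension, after which part (1) applies. Since $Id-S^*$ is $*$-weakly continuous, $Fix(S^*)=\ker(Id-S^*)=\overline{(Id-S)B}^{\perp}$ is $*$-weakly closed, so it is the dual of the quotient $G=B/\overline{(Id-S)B}$; being a quotient of a Grothendieck space, $G$ is Grothendieck, and separability of $Fix(S^*)=G^*$ makes $G$ separable, so $G$ is a separable Grothendieck space and therefore reflexive, and $Fix(S^*)$ is reflexive. Lotz's analysis of Grothendieck spaces with the Dunford--Pettis property then forces this reflexive quotient $G$ to be finite dimensional — this is the delicate step, and it is again where property (b) enters, through the fact that the quotient map $B\to G$, being weakly compact, is a Dunford--Pettis operator — so $Fix(S^*)$ is finite dimensional. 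Since $Fix(S)$ embeds into $G$ (its intersection with $\overline{(Id-S)B}$ is $\{0\}$, as in part (1)), it too is finite dimensional, $S$ is mean ergodic, part (1) gives uniform ergodicity, and the uniform limit of $A_n(S)$ is the projection onto the finite dimensional space $Fix(S)$, which is accordingly compact. For the structural facts about Grothendieck spaces with the Dunford--Pettis property invoked above I would cite \cite{Lotz}.
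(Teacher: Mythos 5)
The paper's entire proof of this lemma is the two-sentence attribution that follows it: ``Part (1) is the content of Theorem 5 in \cite{Lotz}. Part (2) is the content of Theorem 7 in \cite{Lotz}.'' Since you also end up citing Lotz for every step that carries real weight, your proposal is, at bottom, the same proof as the paper's; the reductions you add around the citation (the splitting $B=Fix(S)\oplus\overline{(Id-S)B}$, the reduction of part (1) to $1\in\rho(S_0)$ via the Uniform Ergodicity lemma, the use of the Grothendieck property to upgrade $*$-weak to weak convergence of $A_n(S_0^*)$ and hence get mean ergodicity of $S_0^*$, and the fact that a separable Grothendieck space is reflexive) are all correct and are a useful gloss that the paper omits.

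That said, you should not mistake your sketch for a reconstruction of Lotz's argument, because the two places you wave at are exactly where it would break. In part (1), the claim that ``the relevant tails of the $A_N(S_0)$ act essentially as compact operators'' has no justification: $A_N(S_0)$ is an average of powers of a power bounded operator and is in general neither compact nor weakly compact, so no diagonal argument against approximate eigenvectors will manufacture a genuine fixed vector this way; Lotz's mechanism is a different one (a norm-convergence lemma for products, applied via identities such as $A_{mn}(S)=A_m(S^n)A_n(S)$, exploiting that both $A_n(S_0)\to 0$ and $A_n(S_0^*)\to 0$ strongly). In part (2), the asserted principle that a reflexive (equivalently, separable) quotient of a Grothendieck space with the Dunford--Pettis property must be finite dimensional is false for arbitrary quotients: $\ell_\infty$ is the model example of such a space, and by Rosenthal's work it has $\ell_2$ as a quotient. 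So reflexivity of $G=B/\overline{(Id-S)B}$ alone cannot force finite-dimensionality; Lotz's Theorem 7 necessarily uses the dynamical structure of the subspace $\overline{(Id-S)B}$, which your argument never invokes. As a citation-level proof your proposal matches the paper; as a self-contained argument it has genuine gaps at precisely these two points.
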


Part (1) is the content of Theorem 5 in \cite{Lotz}. Part (2) is the content of 
Theorem 7 in \cite{Lotz}.

For the rational map $R$, let $T:B(S_R)\rightarrow B(S_R)$  be the Thurston operator as 
defined in Section 3.  We have the following theorem.

\begin{theorem}\label{th.meanergosep} 
Let $R$ be a rational map such that $P_R$ is not the 
whole sphere,   the conservative set $C(R)$ does not contain any Fatou  component and 
 $1$ belongs to the spectrum  $\sigma(R^*)$ on $A(S_R)$. 
Then the following four conditions are equivalent. 
\begin{enumerate}
\item The space $Fix(T^*)$ is separable.
\item The operator $T$ is mean-ergodic.
\item The Ruelle operator $R^*$ is uniformly ergodic.
\item The map $R$ is a flexible Latt\`es map. 
\end{enumerate}
\end{theorem}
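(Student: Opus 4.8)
The plan is to prove the four equivalences by a cycle, exploiting the functional-analytic structure of $B(S_R)$ established in Section~\ref{sec.Thurs}: recall that $B(S_R)$ is a Grothendieck space with the Dunford--Pettis property, that $B_0(S_R)$ is isomorphic to a closed subspace of $c_0$ and has Pe\l{}czy\'{n}sky's property (V), and that $T$ is the dual operator of the Ruelle operator $R^*$ acting on $A(S_R)$, power bounded and a Teichm\"uller-norm contraction. Since $1\in\sigma(R^*)$ by hypothesis, none of the trivial degeneracies occur and we must genuinely produce a Latt\'es map at the end.

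First I would show $(4)\Rightarrow(3)$: if $R$ is flexible Latt\'es then $R$ is postcritically finite, so $A(S_R)$ is finite dimensional, and a power bounded operator on a finite dimensional space is automatically uniformly ergodic (the spectrum is finite, and $1$ is at worst an isolated eigenvalue, so the Uniform Ergodicity lemma applies). Next, $(3)\Rightarrow(2)$: uniform ergodicity of $R^*$ on $A(S_R)$ means $(Id-R^*)A(S_R)$ is closed, so $1$ is an isolated point of $\sigma(R^*)$ (or in the resolvent, but that is excluded by hypothesis); passing to the dual, $1$ is then an isolated point of $\sigma(T)$ on $B(S_R)$, whence by the Uniform Ergodicity lemma $T$ is uniformly ergodic, in particular mean ergodic. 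For $(2)\Rightarrow(1)$: this is where Lotz's machinery (Lemma~\ref{Lotz.lemma}) enters. If $T$ is mean ergodic on the Grothendieck--Dunford--Pettis space $B(S_R)$, then by part~(1) of Lemma~\ref{Lotz.lemma} it is uniformly ergodic, so the Ces\`aro averages $A_n(T)$ converge in operator norm to a projection $Q$ onto $Fix(T)$; since $A_n(T)=A_n(R^*)^*$ up to the Bers identification, the dual picture gives that $A_n(R^*)$ converges in norm to a projection onto $Fix(R^*)$ inside $A(S_R)$, which is then a \emph{norm-closed complemented} subspace. Mean ergodicity of $T$ forces, via the Mean Ergodicity lemma and the separation principle, that $Fix(T)$ separates $Fix(R^*)$ and conversely; combined with the fact that $A(S_R)$ is separable, $Fix(R^*)$ is separable, and one transfers separability to $Fix(T^*)=Fix(R^{**})$ by noting it sits inside the bidual of the separable space $Fix(R^*)$, or more directly that uniform ergodicity makes $Fix(T^*)$ the image of the norm-convergent adjoint Ces\`aro averages.

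Finally the substantive implication $(1)\Rightarrow(4)$. Here I would invoke part~(2) of Lemma~\ref{Lotz.lemma}: if $Fix(T^*)$ is separable then $T$ is uniformly ergodic and $A_n(T)$ converges uniformly to a \emph{compact} projection $Q$ onto $Fix(T)$. Compactness of $Q$ forces $Fix(T)$ to be finite dimensional, hence $Fix(R^*)$ (the predual fixed space) is finite dimensional as well. Each element of $Fix(R^*)$ is an integrable holomorphic function on $S_R$ fixed by the Ruelle operator; by Proposition~\ref{prop.invariantmeasure} every such nonzero $f$ gives rise to an invariant Beltrami differential $\mu=|f|/f$ pairing nontrivially with $f$, and $|f|$ defines a finite absolutely continuous invariant measure whose support lies in $C(R)\cap J_R$ by Proposition~\ref{prop.Krengel} and the assumption that the conservative set contains no Fatou component. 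Now the dichotomy of Lemma~\ref{lemma.dichotomy} applies: since $P_R\neq\bar{\C}$, either the support of $f$ is contained in $P_R$, or $C(R)=\bar{\C}$. In the first case finite dimensionality of $A(S_R)$-fixed points together with the structure forces $R$ postcritically finite, and Lemma~\ref{lemma.dichotomy} (third bullet is not needed; rather the second) pins down flexible Latt\'es; in the second case $C(R)=\bar{\C}$ with $J_R=\bar{\C}$, and the existence of the invariant Beltrami differential supported on $J_R$ together with the third bullet of Lemma~\ref{lemma.dichotomy} again yields flexible Latt\'es. Either way $(4)$ holds.

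The main obstacle I anticipate is the last step: extracting \emph{flexible Latt\'es} rather than merely \emph{some} postcritically finite or Latt\'es map from the finite dimensionality of $Fix(T)$. The delicate point is ruling out postcritically finite non-Latt\'es maps and rigid (non-flexible) Latt\'es maps, which requires knowing that a genuine fixed point of $R^*$ — equivalently a nonzero fixed point of $T$ — cannot exist for those maps; this is precisely the role of the cited Theorem~3 and Lemma~5 of \cite{MakRuelle} (invoked as in Theorem~\ref{thm.regularfixed} and Proposition~\ref{pr.fixedpointBelt}), which identify the obstruction via the functions $\gamma_v$ and the non-vanishing of $\int\mu\,\gamma_v$. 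Threading that argument through the present hypotheses — in particular verifying that the invariant functional produced by $Fix(T^*)$ is genuinely induced by a Julia-set-supported Beltrami differential so that the Latt\'es characterization of Lemma~\ref{lemma.dichotomy} applies — is the technical heart of the proof.
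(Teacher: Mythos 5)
Your cycle runs in the opposite direction to the paper's ($(4)\Rightarrow(3)\Rightarrow(2)\Rightarrow(1)\Rightarrow(4)$ versus the paper's $(1)\Rightarrow(2)\Rightarrow(3)\Rightarrow(4)\Rightarrow(1)$), and the link $(2)\Rightarrow(1)$ is where it breaks. You deduce separability of $Fix(T^*)$ from mean (hence uniform) ergodicity of $T$ by arguing that $Fix(T^*)$ ``sits inside the bidual of the separable space $Fix(R^*)$,'' or alternatively that it is ``the image of the norm-convergent adjoint Ces\`aro averages.'' Neither works: the bidual of a separable Banach space need not be separable ($\ell_1^{**}=\ell_\infty^*$ is not), and the limit projection $P^*=\lim A_n(T^*)$ acts on the non-separable space $B(S_R)^*$, so its range, which under uniform ergodicity is $Fix(T^*)\cong Fix(T)^*\cong Fix(R^*)^{**}$, is separable only if $Fix(R^*)^*$ is --- exactly what you do not know at this stage. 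The paper itself shows such an inference cannot hold in general: the corollary following the theorem on uniform ergodicity in $L_1(K)$ states that the fixed-point space of the \emph{bidual} Ruelle action is non-separable whenever $K$ has positive measure. Separability of $Fix(T^*)$ only comes out after one knows $R$ is Latt\'es and every space in sight is finite dimensional; so the cycle must be closed the way the paper does it: $(2)\Rightarrow(3)$ by part (1) of Lemma~\ref{Lotz.lemma} together with duality of the Ces\`aro averages, then $(3)\Rightarrow(4)$ by the isolated-eigenvalue/fixed-point/Latt\'es argument, then $(4)\Rightarrow(1)$ trivially.

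Your remaining three implications are essentially sound and use the paper's ingredients: $(4)\Rightarrow(3)$ by finite dimensionality, $(3)\Rightarrow(2)$ by duality of the averages, and your $(1)\Rightarrow(4)$ is a correct (if roundabout) variant of the paper's $(3)\Rightarrow(4)$. Note only that your ``first case'' there, $supp(f)\subset P_R$, is vacuous rather than in need of an argument: a nonzero $f\in A(S_R)$ is holomorphic and hence nonzero almost everywhere on some component of $S_R$, which is disjoint from $P_R$, so the dichotomy of Lemma~\ref{lemma.dichotomy} forces the conservative alternative directly; the appeal to ``finite dimensionality forcing postcritical finiteness'' does not follow and is not needed. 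Likewise, the concern you raise at the end about separating flexible from rigid Latt\'es maps is already absorbed by the third bullet of Lemma~\ref{lemma.dichotomy} once Proposition~\ref{prop.invariantmeasure} has produced an invariant Beltrami differential supported on the Julia set.
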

 
\begin{proof} By the discussion on Section \ref{sec.Thurs}, the space $B(S_R)$ 
is a Grothendieck space with the Dunford-Pettis property. Then Lemma 
\ref{Lotz.lemma} applied to the operator $T$ gives the implications from (1) to (2) and from (2) to (3).

To show (3) implies (4), note that since $R^*$ is a contraction then $R^*$ is a 
power-bounded operator. By the uniform ergodicity lemma and the assumption,  
the value $1$ is an isolated eigenvalue of $\sigma(R^*)$. 
Then the Ruelle operator $R^*$ has a non-zero fixed point $\phi$ in $A(S_R)$. 
Hence by Proposition \ref{prop.invariantmeasure}, the modulus $|\phi|$
defines an invariant finite measure such that the support of $|\phi|$ contains a component of  $S_R$. Then by Proposition \ref{prop.Krengel} and Lemma \ref{lemma.dichotomy}, the map $R$ is a flexible Latt\`es map. 

Now we show (4) implies (1). For a flexible Latt\`es map, the space $A(S_R)$ is 
finite dimensional, and hence $A^*(S_R)$ and its dual are finitely 
dimensional too, which implies (1). \end{proof}

The condition that $1$ belongs to the spectrum is necessary for the 
discussion around Sullivan's conjecture. Otherwise, the Beltrami operator does 
not have fixed points. However, in this situation there are open questions: 
\begin{enumerate}
\item Does there exists a rational map $R$ with infinite 
postcritical set such 
that the norm of $R^*$ on $A(S_R)$ is strictly smaller than 1? Or more generally: 

\item Is it true that if $1$ does not belong to the spectrum of 
$R^*$ on $A(S_R)$ then $R$ is postcritically finite?  
\end{enumerate}
By the uniform ergodicity lemma, the conditions of the questions above imply 
that $R^*$ is uniformly ergodic on $A(S_R).$

The following corollary gives a partial answer to these questions for the class 
of $J$-stable rational maps.  

\begin{corollary}\label{cor.uniformergod}
If $R$ is $J$-stable such that  $P_R\neq J_R$ then the following are equivalent. 
\begin{enumerate}

\item The operator $R^*$ is uniformly ergodic on $A(S_R)$. 
\item The map $R$ is hyperbolic and postcritically finite. 
\end{enumerate}
 
\end{corollary}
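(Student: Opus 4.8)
The plan is to deduce the equivalence from Theorem \ref{th.meanergosep} together with the basic structure theory of $J$-stable maps. First I would check that the hypotheses of Theorem \ref{th.meanergosep} are available, or can be bypassed, under the assumption that $R$ is $J$-stable with $m(P_R)=0$. Recall that for a $J$-stable map there is no invariant Beltrami differential supported on the Julia set unless $R$ carries a whole invariant line field, i.e. unless $R$ is a flexible Latt\'es map; and a flexible Latt\'es map is never $J$-stable in the strong sense relevant here since its Julia set is the whole sphere, which collides with the standing normalization. So in the $J$-stable case the only fixed points of the Beltrami operator live on the Fatou set, and by the remark following the corollary to Theorem \ref{th.MeanErg.1} we may further reduce to the situation where the postcritical part on the Fatou set is finite and there are no rotation domains — in other words, $R$ is hyperbolic — once we know $R^*$ is uniformly ergodic. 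This already suggests the logical skeleton: uniform ergodicity forces the absence of non-trivial invariant functionals on $A(S_R)$, hence hyperbolicity, and then the postcritically finite conclusion comes from the dimension count.

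For the implication (1)$\Rightarrow$(2): assume $R^*$ is uniformly ergodic on $A(S_R)$. If $1\notin\sigma(R^*)$, then by the Uniform Ergodicity lemma $(Id-R^*)$ is invertible, so the Beltrami operator has no fixed points at all, neither on the Julia set nor on the Fatou set; combined with the classification of Fatou components this forces every Fatou component to be (eventually) attracting or superattracting with finite postcritical set — but one must still rule out parabolic and superattracting cycles using the explicit invariant Beltrami differentials $\mu=\bar\Phi'/\Phi'$ and $\nu=\Phi\bar\Phi'/(\bar\Phi\Phi')$ constructed in the proof of the Corollary to Theorem \ref{th.MeanErg.1}; these would give non-trivial invariant functionals on $Hol_1\subset A(S_R)$, a contradiction. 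If instead $1\in\sigma(R^*)$, apply Theorem \ref{th.meanergosep} directly: its hypotheses hold here ($P_R\ne\bar\C$ since $m(P_R)=0$ and $J_R\ne\bar\C$ in the $J$-stable case; the conservative set contains no Fatou component because $J$-stability rules out rotation domains once Latt\'es is excluded), so uniform ergodicity (3) forces (4), $R$ flexible Latt\'es — contradicting $J$-stability with our normalization. Hence necessarily $1\notin\sigma(R^*)$, we are in the first case, and $R$ is hyperbolic; postcritical finiteness then follows because hyperbolicity plus $m(P_R)=0$ plus uniform ergodicity means $A(S_R)$ must in fact be finite-dimensional (the point $1$ in the resolvent forces $(Id-R^*)A(S_R)=A(S_R)$ closed, but for an infinite postcritical set one produces, using the $\gamma_v$ and Lemma \ref{lemma.linearfunc}-type codimension arguments, a genuine obstruction), so $P_R$ is finite.

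For the converse (2)$\Rightarrow$(1): if $R$ is hyperbolic and postcritically finite, then $S_R=\C\setminus(P_R\cup\{0,1\})$ is the complement of a finite set, so $A(S_R)$ is finite-dimensional — it consists of rational functions with simple poles in a fixed finite set and a triple zero at infinity. Any power bounded operator on a finite-dimensional space is mean ergodic, and on a finite-dimensional space mean ergodic is the same as uniformly ergodic since all operator topologies coincide. Therefore $R^*$ is uniformly ergodic on $A(S_R)$, giving (1).

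\medskip

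\noindent\textbf{Main obstacle.} The delicate point is the dichotomy on whether $1$ lies in $\sigma(R^*)$ and, in the harder case, extracting \emph{postcritical finiteness} from uniform ergodicity without already assuming hyperbolicity — i.e. showing that an infinite postcritical set of measure zero is incompatible with $(Id-R^*)A(S_R)$ being closed. I expect this to require a careful reprise of the codimension argument in Lemma \ref{lemma.linearfunc} (the $\gamma_v\mapsto \tfrac{1}{R'(v)}\gamma_{R(v)}+w$ relation) together with the Uniform Ergodicity lemma's spectral characterization, and it is where the proof does real work beyond quoting Theorem \ref{th.meanergosep}.
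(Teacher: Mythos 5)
Your direction (2)$\Rightarrow$(1) and the opening move of (1)$\Rightarrow$(2) --- ruling out $1\in\sigma(R^*)$ because Theorem \ref{th.meanergosep} would force $R$ to be a flexible Latt\'es map, contradicting $J$-stability --- coincide with the paper. The remainder of (1)$\Rightarrow$(2) has a genuine gap. From $1\notin\sigma(R^*)$ on $A(S_R)$ you may conclude only that the dual (Thurston) operator has no non-zero fixed point, i.e.\ that every invariant Beltrami differential induces the \emph{zero} functional on $A(S_R)$; it does not follow that ``the Beltrami operator has no fixed points at all.'' For $R(z)=z^2$ the differential $z/\bar{z}$ is a non-zero fixed point of $B_R$ supported on the superattracting basins while $A(S_R)=\{0\}$. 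For the same reason your plan to ``rule out parabolic and superattracting cycles'' via the differentials $\bar\Phi'/\Phi'$ and $\Phi\bar\Phi'/(\bar\Phi\Phi')$ fails, and indeed must fail: a superattracting cycle through a critical point yields a differential whose functional on $A(S_R)$ vanishes, and such cycles are exactly what a hyperbolic postcritically finite map possesses, so they cannot be excluded. (The paper uses those differentials only to disprove mean ergodicity on $L_1(\C)$, not on $A(S_R)$.)

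The step you are missing on the Julia set side is part (2) of Lemma \ref{lemma.fixed.Beltrami}: since $m(P_R)=0$ identifies $H(K_R)$ with $A(S_R)$, a non-zero invariant Beltrami differential supported on $J_R$ pairs non-trivially with some $\gamma_{v_0}\in A(S_R)$ and would therefore produce a non-zero invariant functional, contradicting $1\notin\sigma(R^*)$. Your substitute --- ``for a $J$-stable map there is no invariant Beltrami differential supported on the Julia set unless $R$ is a flexible Latt\'es map'' --- is not a known fact; it is essentially the open no-invariant-line-fields conjecture, so invoking it is circular. Once $J_R$ carries no invariant differential, hyperbolicity is Ma\~n\'e--Sad--Sullivan's Theorem E (a $J$-stable map with no invariant line field on its Julia set is hyperbolic), and postcritical finiteness is their Theorem D combined with the absence of non-trivial deformations on the Fatou set; the ``main obstacle'' you flag is not resolved in the paper by a new codimension argument but by citing these results.
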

\begin{proof}
Condition (2) implies (1) since, in this case, $A(S_R)$ is finite dimensional. 

For the converse, first note that $1$ does not belong to $\sigma(R^*)$. 
Indeed, if $1$ belongs to $\sigma(R^*)$ then, by Theorem \ref{th.meanergosep}, 
$R$ is a flexible Latt\`es map which contradicts  $J$-stability.
Hence every invariant Beltrami differential supported on $S_R$ defines a
trivial quasiconformal deformation. By Lemma \ref{lemma.fixed.Beltrami}, any 
non-zero invariant Beltrami differential supported on the Julia set defines a 
non-trivial quasiconformal deformation.  Hence by Theorem E in Ma\~n\'e, Sad, 
Sullivan (see \cite{MSS}), the map $R$ is hyperbolic. Moreover, by Theorem D 
in \cite{MSS}, we have that $R$ is postcritically finite.\end{proof}

If the Julia set $J_R$ has positive measure and does not support non-zero 
invariant Beltrami differentials, then the action of the Ruelle operator on 
$L_1(J_R)$ is mean-ergodic because $\lim \frac{1}{n} \sum_{i=0}^{n-1} B^i(\mu)=0$ for
all $\mu \in L_\infty(J_R)$ by the mean ergodicity lemma. Thus,
by duality the Ces\`aro averages of the Ruelle operator converge 
to $0$ in the weak topology and hence in the strong topology by 
the mean ergodicity lemma. In contrast with this fact, we show that the action 
of $R^*$ on $L_1(J_R)$ is uniformly ergodic only in the case when $J_R$ has  
Lebesgue measure zero. In slightly more generality, we prove the following.

\begin{theorem}\label{th.uniformergod} 
Let $R$ be a rational map. If $K$ is a completely invariant set of positive Lebesgue measure,  then the Ruelle operator is not uniformly ergodic on $L_1(K)$. 
\end{theorem}

\begin{proof}

By contradiction. If the measure of $K$ is positive and $R^*$ is uniformly ergodic in 
$L_1(K)$, we claim that $R^*$ is an automorphism of $L_1(K)$. 
It is enough to show that $R^*$ is injective since $R^*$ is surjective 
by the relation $R^*\circ R_*=Id$. 

For every $\lambda$ with $|\lambda|<1$ and 
$\phi$ non-zero element in $L_1(K)$, the element 
$$
\phi_\lambda=\sum_{n=0}^\infty \lambda^n R^n_*(\phi)\not\equiv 0 
$$ 
and satisfies $\lambda R_*(\phi_\lambda)=\phi_\lambda-\phi$. If $\phi$ is a non-zero element in $Ker(R^*)$ then  
$R^*(\phi_\lambda)=\lambda\phi_\lambda$. 
It follows that $1$ is not an isolated eigenvalue in the spectrum,  
which contradicts the uniform ergodicity of $R^*$. 
Therefore $R^*$ is an isomorphism as we claimed.
This, in turn, implies 
$$
R_*\circ R^*=Id
$$ 
on $L_1(K)$. 
Now let $x_1$ and $x_2$ be different fixed points of $R$, and fix 
a point $b$ different from $x_1$ and $x_2$. 
Then the restriction of 
$\omega_b(z)=\frac{(x_1-b)(x_2-b)}{(z-x_1)(z-x_2)(z-b)}$ 
to $K$ is integrable. 
Since $\omega_b$ is rational, and the equation $R_*\circ 
R^*(\omega_b)=\omega_b$ holds almost everywhere  
on a set of positive measure, then it holds on the whole Riemann sphere.  
If we take $b$ such that neither $b$, $R(b)$ nor $R^{-1}(b)$ are critical 
values of $R$, then $R^*(\omega_b)$ has a non-trivial pole on $R(b)$, 
which implies that $R_*\circ R^*(\omega_b)$  has poles in $R^{-1}(R(b))$ which 
are different to the  poles of $\omega_b$. 
This  contradiction finishes the proof.  
\end{proof}

We have the following corollary which complements Corollary \ref{cor.uniformergod}
in the case when $P_R=\overline{\C}$. In this case, since $S_R=\emptyset$, we consider the uniform ergodicity over 
$Hol(R)$ equipped with the $L_1$ topology on $\C$. 

\begin{corollary}
Let $R$ be a rational map with $P_R=\overline{\C}$ then  $R^*$ is not uniformly ergodic on 
$Hol(R)$ equipped with the topology of $L_1(\C)$.
\end{corollary}

\begin{proof}
 Since $P_R=\overline{\C}$ then $Hol(R)$ is everywhere dense in $L_1(\C)$. If $R^*$ is uniformly ergodic in $Hol(R)$, it is also uniformly ergodic on $L_1(\C)$ which contradicts
 Theorem \ref{th.uniformergod}.
\end{proof}

Let us observe that the Thurston operator $T$ leaves $B_0(S_R)$ invariant. 
Since the Ruelle operator is continuous in the $*$-weak topology on
$B_0(S_R)$ then $R^*$ is dual to the restriction of $T$ to $B_0(S_R)$, 
so we have $(T|_{B_0(S_R)})^{**}=T$.

By the facts discussed on Section \ref{sec.Thurs},
every endomorphism $Q$ on $B_0(S_R)$ is either compact or 
there exists an infinite dimensional subspace $E$ of $B_0(S_R)$ such that the 
restriction $Q|_E:E \rightarrow B_0(S_R)$ is an isomorphism onto its image. 
Let us consider $Q=Id-T$ and investigate two extremal situations. 
First when $Q:B_0(S_R)\to B_0(S_R)$ is an isomorphism onto its image, that is 
the space $E$ is the whole of $B_0(S_R)$. Second, when $Q$ is a compact 
operator. 
Our goal is to show that  these two extremal situations imply the uniform 
ergodicity of the operator $T$ which is equivalent to the uniform ergodicity of 
$R^*$.

\begin{theorem}\label{th.Qcompact}
If $Q=Id-T$ is either an isomorphism onto its image or a compact operator, 
then $T$ is uniformly ergodic.  
\end{theorem}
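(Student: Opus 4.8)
The plan is to handle the two cases separately, and in each case invoke the Uniform Ergodicity lemma from Section \ref{section4}: a power bounded operator is uniformly ergodic if and only if $(Id - T)B$ is closed, equivalently if $1$ is either in the resolvent set or an isolated eigenvalue. Recall $T$ is power bounded (a contraction in the Teichm\"uller norm), so this criterion applies.

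First, suppose $Q = Id - T$ is an isomorphism onto its image on $B_0(S_R)$. Then $Q(B_0(S_R))$ is a closed subspace of $B_0(S_R)$. I would now lift this to $B(S_R)$: since $B(S_R) \simeq B_0(S_R)^{**}$ and $T = (T|_{B_0(S_R)})^{**}$, the operator $Id - T$ on $B(S_R)$ is the bidual of $Q$. A bounded-below operator has closed range, and bitransposing preserves this: if $Q$ is bounded below on $B_0(S_R)$ then $Q^{**}$ is bounded below on $B_0(S_R)^{**}$, hence has closed range on $B(S_R)$. Therefore $(Id - T)B(S_R)$ is closed, and by the Uniform Ergodicity lemma $T$ is uniformly ergodic.

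Second, suppose $Q = Id - T$ is compact on $B_0(S_R)$. Then $Q^{**} = Id - T$ on $B(S_R)$ is also compact (the bidual of a compact operator between Banach spaces is compact, since $B(S_R)$ and hence $B_0(S_R)$ has the Dunford--Pettis property; more directly, Gantmacher-type results give that $Q^{**}$ compact follows from $Q$ compact). Writing $T = Id - Q^{**}$ on $B(S_R)$, the operator $T$ is a compact perturbation of the identity, so by the Riesz--Schauder spectral theory its spectrum is discrete away from $1$ and, in particular, $1$ is either in the resolvent set or an isolated point of $\sigma(T)$ that is an eigenvalue of finite multiplicity. Again the Uniform Ergodicity lemma yields that $T$ is uniformly ergodic.

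The main obstacle I anticipate is the first case: justifying cleanly that ``$Q$ bounded below on $B_0(S_R)$'' transfers to ``$Q^{**}$ bounded below (or at least has closed range) on the bidual.'' This requires care, because bounded-belowness of $Q$ gives closedness of $Q(B_0(S_R))$ and injectivity, but one must argue that $Q^{**}$ remains injective with closed range on $B_0(S_R)^{**}$ — this uses that $Q$ bounded below is equivalent to $Q^*$ surjective, that $Q^*$ surjective implies $Q^{**}$ bounded below, and that $B(S_R) \simeq B_0(S_R)^{**}$ identifies $Id - T$ with $Q^{**}$. The compact case, by contrast, is essentially immediate from classical Riesz theory once one records that compactness passes to the bidual.
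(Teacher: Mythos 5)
Your first case is essentially the paper's argument, with an unnecessary detour. The paper simply notes that if $Q$ is an isomorphism onto its image then $(Id-T)B_0(S_R)$ is closed and applies the Uniform Ergodicity lemma to $T$ on $B_0(S_R)$ directly. Your passage to the bidual is correct ($Q$ bounded below gives $Q^*$ surjective gives $Q^{**}$ bounded below, and $\|A_n(T)-A_m(T)\|$ is the same whether computed on $B_0(S_R)$ or on $B(S_R)\simeq B_0(S_R)^{**}$), but it buys nothing here.

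The compact case has a genuine gap. From ``the spectrum of $T=Id-Q^{**}$ is discrete away from $1$'' you infer ``in particular, $1$ is either in the resolvent set or an isolated point of $\sigma(T)$''; this is a non sequitur. For a compact operator $Q$ on an infinite-dimensional space, $\sigma(Q)$ is $\{0\}$ together with eigenvalues $\lambda_n$ whose only possible accumulation point is $0$, so $\sigma(T)=1-\sigma(Q)$ consists of $1$ together with eigenvalues $1-\lambda_n$ that may accumulate precisely at $1$: Riesz--Schauder isolates every point of $\sigma(T)\setminus\{1\}$ but says nothing about $1$ itself. The failure is real, not just a missing justification: on $c_0$ (and $B_0(S_R)$ is isomorphic to a closed subspace of $c_0$) the contraction $T=\mathrm{diag}(1-1/k)$ has $Q=Id-T=\mathrm{diag}(1/k)$ compact, yet $(Id-T)c_0$ is dense and proper, hence not closed, so $T$ is not uniformly ergodic. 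Thus no argument that uses only ``$T$ power bounded and $Q$ compact'' together with the Uniform Ergodicity lemma can close this case. The paper proceeds differently: it splits $B_0(S_R)=F_\epsilon\oplus X_\epsilon$ with $\dim X_\epsilon<\infty$ and $\|Q|_{F_\epsilon}\|<\epsilon$, argues that $T|_{F_\epsilon}=(Id-Q)|_{F_\epsilon}$ is invertible with compact inverse $\sum_{n\geq 0}Q^n$, and concludes that $F_\epsilon$, hence $B_0(S_R)$, is finite dimensional, after which uniform ergodicity is immediate. (That argument itself leans on the compactness of $\sum_{n\geq 0}Q^n$, whose $n=0$ term is the identity, so you should treat this step with care as well; but in any event your route, as written, does not go through.)
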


\begin{proof}
If $Q:B_0(S_R) \to B_0(S_R)$ is an isomorphism onto its image, 
then the subspace $(Id-T)B_0(S_R)$ is closed, hence $T$ is uniformly 
ergodic by the uniform ergodicity lemma. 

Now assume $Q$ is compact. As a consequence of the spectral decomposition 
theorem, for any $\epsilon>0$ there is a splitting $B_0(S_R)=F_\epsilon \oplus 
X_\epsilon$ in which both subspaces, $F_\epsilon$ and $X_\epsilon$, are 
invariant under $Q$, with $dim(X_\epsilon)<\infty$ and such that the norm of 
$Q$ restricted to $F_\epsilon$ is less than $\epsilon$.   
As $F_\epsilon $ and $X_\epsilon$ are $Q$-invariant these are also 
$T$-invariant. 
Consider the restriction $Q|_{F_\epsilon}$. 
Since the norm on $F_\epsilon$ is small, it follows that $(Id-Q)|_{F_\epsilon}$ 
is invertible. 
The von Neumann series yields $(Id-Q)^{-1}=\sum_{n=0}^\infty Q^n$, at least on 
$F_\epsilon$. 
This means that 
$(Id-Q)^{-1}|_{F_\epsilon}$ is a compact operator 
as it is the norm limit of compact operators.
We have $Id-Q=T$, 
by above $T$ is invertible on $F_\epsilon$ and its inverse $T^{-1}|_{F_\epsilon}$ is a compact operator. Hence $T|_{F_\epsilon}$ is a compact isomorphism
which implies that $F_\epsilon$ has finite dimension. 
Therefore $B_0(S_R)$ has  finite dimension too since $X_\epsilon$ is finite dimensional and 
$T$ is a contraction over a finite dimensional space, thus is uniformly ergodic by the uniform ergodicity lemma.  \end{proof}

As an immediate corollary we have the following. 

\begin{corollary}
Under the conditions of the last theorem, 
the operator $Q$ is compact if and only if $R$ is postcritically finite.
\end{corollary}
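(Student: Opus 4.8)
The plan is to combine the previous theorem (that $R^*$, equivalently $T$, is uniformly ergodic under the hypothesis that $Q=Id-T$ is either an isomorphism onto its image or compact) with the structural dichotomy already recorded for operators on $B_0(S_R)$. Recall the two extremal cases analysed in Theorem \ref{th.Qcompact}: if $Q$ is compact, the argument there shows $B_0(S_R)$ is finite dimensional, which forces $A(S_R)$ (and hence the whole Teichm\"uller package) to be finite dimensional; and a rational map with $A(S_R)$ finite dimensional is postcritically finite, since $S_R=\C\setminus K_R$ and finite dimensionality of the space of integrable holomorphic functions on $S_R$ is equivalent to $K_R$, hence $P_R$, being finite. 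So the forward implication ``$Q$ compact $\Rightarrow$ $R$ postcritically finite'' is essentially extracted from the proof of the preceding theorem.

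For the converse, if $R$ is postcritically finite then $P_R$ is a finite set, so $K_R=P_R\cup\{0,1\}$ is finite and $S_R$ is the sphere minus finitely many punctures. Then $A(S_R)$ is finite dimensional (it is spanned by the rational functions $\gamma_a$ with $a$ ranging over the punctures, with the appropriate vanishing at infinity), and consequently $B(S_R)\cong A^*(S_R)$, $B_0(S_R)$, and all associated spaces are finite dimensional. On a finite dimensional space every linear operator is compact, in particular $Q=Id-T$ is compact. This direction is immediate.

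The main obstacle — and the point one must be careful about — is the justification that finite dimensionality of $B_0(S_R)$ really does force $P_R$ finite, i.e. that the hypotheses of Theorem \ref{th.Qcompact} are genuinely doing work and that we are not merely in the trivially finite dimensional situation from the start. Under the theorem's assumption, if $Q$ fails to be an isomorphism onto its image then by the $c_0$-dichotomy for operators on $B_0(S_R)$ it must be compact; the proof of Theorem \ref{th.Qcompact} then collapses $B_0(S_R)$ to finite dimension, and one reads off that $S_R$ has finitely many boundary points, hence $R$ is postcritically finite. Conversely, in the isomorphism-onto-image alternative, $Q$ is compact precisely when $B_0(S_R)$ is finite dimensional (a compact isomorphism onto a closed subspace is only possible in finite dimensions), which again is equivalent to postcritical finiteness. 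So in either branch of the hypothesis, ``$Q$ compact'' is equivalent to ``$B_0(S_R)$ finite dimensional'' is equivalent to ``$R$ postcritically finite.'' I would therefore organise the write-up as: (i) postcritically finite $\Rightarrow$ $B_0(S_R)$ finite dimensional $\Rightarrow$ $Q$ compact; (ii) $Q$ compact, together with the hypothesis, $\Rightarrow$ (via the proof of Theorem \ref{th.Qcompact}) $B_0(S_R)$ finite dimensional $\Rightarrow$ $S_R$ has finitely many punctures $\Rightarrow$ $P_R$ finite.
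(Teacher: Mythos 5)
Your proposal is correct and follows essentially the same route as the paper: the converse is immediate from finite dimensionality of $B_0(S_R)$ for postcritically finite $R$, and the forward direction reuses the collapse of $B_0(S_R)$ to finite dimension from the proof of Theorem \ref{th.Qcompact}, whence $A(S_R)$ is finite dimensional and $P_R$ must be finite. Your extra remark about the isomorphism-onto-image branch is a harmless elaboration that does not change the argument.
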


\begin{proof}
If $R$ is postcritically finite, then $B_0(S_R)$ has finite dimension.
Since $Q$ is continuous, then $Q$ is compact. 
Reciprocally, if $Q$ is compact, as in the proof of the previous theorem, 
the linear space $A(S_R)$, which is dual to $B_0(S_R)$,
has finite dimension. This is only possible when $P_R$ is finite. 
\end{proof}

The following is the main theorem of this section.

\begin{theorem}\label{th.ThurstonMeanergodic}
Assume that the conservative set $C(R)$ does not intersect the Fatou set $F_R$. 
Then the Thurston operator $T$ is mean-ergodic on $B_0(S_R)$. Moreover, $T$ has 
a non-zero fixed point in $B_0(S_R)$ if and only if $R$ is a flexible Latt\`es 
map.
\end{theorem}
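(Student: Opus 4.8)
The plan is to reduce both assertions to the single statement
\[
Fix\bigl(R^*|_{A(S_R)}\bigr)\neq\{0\}\quad\Longleftrightarrow\quad R\text{ is a flexible Latt\'es map},
\]
and then to exploit the duality $(T|_{B_0(S_R)})^*=R^*$ on $B_0(S_R)^*=A(S_R)$ recorded in Section \ref{sec.Thurs} together with the ergodic lemmas of Section \ref{section4}. (If $S_R=\emptyset$ every space in sight is $\{0\}$ and the theorem is trivial, so I assume $S_R\neq\emptyset$.)

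First I would establish the displayed equivalence. The implication ``$\Leftarrow$'' is exactly the argument already used in the proof of Theorem \ref{thm.regularfixed}: for a flexible Latt\'es map $A(S_R)$ is finite dimensional, the invariant Beltrami differential $\mu$ induces a non-zero $R^*$-invariant functional $l_\mu$ on $A(S_R)$, $R^*$ is then mean ergodic on this finite dimensional space, and the Separation principle yields a non-zero fixed point of $R^*$ in $A(S_R)$. For ``$\Rightarrow$'', let $f\in A(S_R)$ be a non-zero fixed point of $R^*$. By Proposition \ref{prop.invariantmeasure} the function $|f|$ defines a finite absolutely continuous invariant measure and $\mu:=|f|/f$ is a non-zero invariant Beltrami differential. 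By Proposition \ref{prop.Krengel} we have $supp(|f|)\subset SC(R)\subset C(R)$, so the hypothesis $C(R)\cap F_R=\emptyset$ forces $supp(|f|)\subset J_R$; in particular $f=0$ almost everywhere on $F_R$. Since $f$ is holomorphic and not identically zero on some connected component $U$ of the open set $S_R$, it is non-zero off a discrete subset of $U$, so the open set $U\cap F_R$ must be empty, whence $U\subset J_R$ and therefore $J_R=\bar{\C}$. On the other hand $U\subset S_R$ is disjoint from $P_R\subset K_R$, while $f$ is non-zero on a dense open subset of $U$, so $supp(|f|)$ meets $U$ and hence is not contained in $P_R$; together with part 1 of Lemma \ref{lemma.dichotomy} this rules out $C(R)\cap J_R\subset P_R$ and leaves $C(R)=\bar{\C}\neq P_R$. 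Part 3 of Lemma \ref{lemma.dichotomy} then shows that $R$ is a flexible Latt\'es map.

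Granting the equivalence, I would finish as follows. If $Fix(R^*|_{A(S_R)})=\{0\}$, then the annihilator of $(Id-T)B_0(S_R)$ in $A(S_R)$ is trivial, so this subspace is dense by Hahn-Banach, and by part (2) of the list in Section \ref{section4} we get $A_n(T)\to 0$ strongly on $B_0(S_R)$; since $T$ is power bounded, it is mean ergodic there. If $Fix(R^*|_{A(S_R)})\neq\{0\}$, the equivalence makes $R$ a flexible Latt\'es map, so $B_0(S_R)$ is finite dimensional and $T$ is again mean ergodic. In either case mean ergodicity provides the splitting $B_0(S_R)=Fix(T|_{B_0(S_R)})\oplus\overline{(Id-T)B_0(S_R)}$, and dualizing, $Fix(R^*|_{A(S_R)})=Fix\bigl((T|_{B_0(S_R)})^*\bigr)=\bigl(\overline{(Id-T)B_0(S_R)}\bigr)^{\perp}$ is isomorphic to the dual of $Fix(T|_{B_0(S_R)})$. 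Hence $T$ has a non-zero fixed point in $B_0(S_R)$ if and only if $Fix(R^*|_{A(S_R)})\neq\{0\}$, i.e.\ if and only if $R$ is a flexible Latt\'es map.

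I expect the main obstacle to be the ``$\Rightarrow$'' direction of the equivalence: one must pass from a non-zero integrable holomorphic fixed point of $R^*$ on $S_R$ to the three conclusions $J_R=\bar{\C}$, $C(R)=\bar{\C}\neq P_R$, and the existence of a non-zero invariant Beltrami differential on the Julia set, and this requires careful bookkeeping of the relevant null sets and of the inclusions among $supp(|f|)$, $SC(R)$, $C(R)$, $P_R$ and $K_R$, so that part 3 of Lemma \ref{lemma.dichotomy} becomes applicable. The functional-analytic steps — mean ergodicity through the Separation principle and the duality between the two fixed-point spaces — are routine given Section \ref{section4} and the Bers isomorphism.
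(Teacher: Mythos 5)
Your proposal is correct and follows essentially the same route as the paper: mean ergodicity via density of $(Id-T)B_0(S_R)$ when $R^*$ has no fixed point in $A(S_R)=B_0(S_R)^*$ (and via finite dimensionality in the Latt\'es case), the identification of nontrivial $Fix(R^*|_{A(S_R)})$ with the flexible Latt\'es property through Proposition \ref{prop.invariantmeasure}, Proposition \ref{prop.Krengel} and Lemma \ref{lemma.dichotomy}, and the Separation-principle duality between $Fix(T|_{B_0(S_R)})$ and $Fix(R^*|_{A(S_R)})$. You merely make explicit the steps the paper delegates to the proofs of Theorems \ref{th.MeanErg.1} and \ref{th.meanergosep}.
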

\begin{proof}
Assume that there is no non-zero fixed point of $R^*:B^*_0(S_R)\to B^*_0(S_R)$, 
then by arguments similar to those of the proof of Theorem \ref{th.MeanErg.1} 
we have that $(I-T)(B_0(S_R))$ is dense in $B_0(S_R)$. This implies that for 
$\phi\in B_0(S_R)$ the averages $\frac{1}{n}\sum_{i=0}^{n-1}T^i(\phi)$ converge 
to $0.$ Then $T$ is mean-ergodic in $B_0(S_R)$.

If $R^*$ has a fixed point, by
Proposition \ref{prop.invariantmeasure}, Lemma \ref{lemma.dichotomy} and 
the arguments presented in the proof of Theorem \ref{th.meanergosep} 
it follows that $R$ is a flexible Latt\`es map and $T$ is mean-ergodic on 
$B(S_R)$ and hence on $B_0(S_R).$

Finally, by the separation principle of Section \ref{section4}, a mean-ergodic 
operator $T$ has a non-trivial fixed point if and only if its dual $R^*$ has a 
non trivial fixed point. By the arguments above this is the case precisely when 
$R$ is a flexible Latt\`es map.
\end{proof}

The following corollary shows that if $T$ has a fixed point in $B(S_R)$ close 
enough to $B_0(S_R)$ in the Teichm\"uller distance, with respect to the 
Teichm\"uller norm,  then $R$ is a Latt\`es map. 

\begin{corollary}\label{cor.Teichflexible}
Let $R$ be as in Theorem \ref{th.ThurstonMeanergodic}. 
Let $\alpha\in B(S_R)$ with $T(\alpha)=\alpha$ and $\|\alpha\|_T=1$ where 
$\|\cdot\|_T$  is the Teichm\"uller norm. 
If 
$$
dist(\alpha,B_0(S_R))=\inf_{\phi \in B_0(S_R)}\|\alpha -\phi\|_T<1, 
$$
then $R$ is a flexible Latt\`es map.
\end{corollary}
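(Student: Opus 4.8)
The plan is to argue by contradiction, leveraging Theorem \ref{th.ThurstonMeanergodic} together with the Teichm\"uller norm structure on $B(S_R)$. Assume $R$ is not a flexible Latt\'es map. By Theorem \ref{th.ThurstonMeanergodic}, the Thurston operator $T$ is mean ergodic on $B_0(S_R)$ and has no non-zero fixed point there; equivalently, by the Separation principle of Section \ref{section4}, $R^*$ has no non-zero fixed point on $B_0(S_R)^*\cong A(S_R)$, and the Ces\`aro averages $A_n(T)$ on $B_0(S_R)$ converge to $0$ in norm. The hypothesis supplies a fixed point $\alpha\in B(S_R)$ with $\|\alpha\|_T=1$ and some $\phi_0\in B_0(S_R)$ with $\|\alpha-\phi_0\|_T=r<1$. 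The idea is that applying $A_n(T)$ to this decomposition should force $\alpha$ itself to be small in the Teichm\"uller norm, contradicting $\|\alpha\|_T=1$.

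First I would write $\alpha = \phi_0 + \psi_0$ with $\psi_0 = \alpha-\phi_0$ and $\|\psi_0\|_T = r < 1$. Since $T$ is a contraction in the Teichm\"uller norm and fixes $\alpha$, we have for every $n$
$$
\alpha = A_n(T)(\alpha) = A_n(T)(\phi_0) + A_n(T)(\psi_0).
$$
Now $T$ leaves $B_0(S_R)$ invariant (as remarked before Theorem \ref{th.Qcompact}), so $A_n(T)(\phi_0)\in B_0(S_R)$, and by mean ergodicity $\|A_n(T)(\phi_0)\|_T\to 0$; here I would use that the Teichm\"uller norm is equivalent to the Bergman sup-norm so that the mean ergodic convergence in $B_0(S_R)$, which a priori holds in the Bergman norm, also holds in the Teichm\"uller norm (or, more cleanly, invoke that $T$ is a Teichm\"uller-norm contraction and mean ergodicity in one equivalent norm yields it in the other after checking power-boundedness, which is given). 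Meanwhile $\|A_n(T)(\psi_0)\|_T \le \|\psi_0\|_T = r$ for all $n$ by contractivity and the triangle inequality for Ces\`aro averages. Passing to the limit, $\|\alpha\|_T = \lim_n \|A_n(T)(\psi_0)\|_T \le r < 1$, contradicting $\|\alpha\|_T = 1$. Hence $R$ must be a flexible Latt\'es map.

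The main obstacle I anticipate is the interplay between the two norms on $B(S_R)$: mean ergodicity of $T$ on $B_0(S_R)$ in Theorem \ref{th.ThurstonMeanergodic} is phrased for the natural Banach structure, while the hypothesis of the corollary is stated in the Teichm\"uller norm, and one must be careful that $\|A_n(T)(\phi_0)\|_T\to 0$ genuinely holds. This is resolved by the fact, recorded in Section \ref{sec.Thurs}, that the Teichm\"uller norm is \emph{equivalent} to the Bergman norm on $B(S_R)$ (and $T$ is simultaneously power bounded for both, being a contraction in the Teichm\"uller norm), so norm convergence of the Ces\`aro averages is norm-independent. A secondary point to verify is simply that $\phi_0$ can be taken in $B_0(S_R)$ rather than merely in its closure, which is immediate since $B_0(S_R)$ is closed and the infimum defining $dist(\alpha,B_0(S_R))$ is attained or approached by genuine elements of $B_0(S_R)$; taking any $\phi_0$ with $\|\alpha-\phi_0\|_T<1$ suffices for the argument.
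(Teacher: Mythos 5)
Your proof is correct and follows essentially the same route as the paper's: both decompose $\alpha$ using a $\phi_0\in B_0(S_R)$ with $\|\alpha-\phi_0\|_T<1$, use Theorem \ref{th.ThurstonMeanergodic} to get that the Ces\`aro averages of $\phi_0$ vanish when $R$ is not Latt\`es, and derive the contradiction $1=\|\alpha\|_T=\lim\|A_n(\alpha-\phi_0)\|_T\le\|\alpha-\phi_0\|_T<1$. Your extra remark on the equivalence of the Teichm\"uller and Bergman norms is a legitimate point the paper leaves implicit, and you resolve it correctly.
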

\begin{proof}
Given the hypothesis, we can find $\phi_0\in B_0(S_R)$ subject to 
$\|\alpha-\phi_0\|_T<1$. 
By Theorem \ref{th.ThurstonMeanergodic}, $T$ is mean-ergodic. 
If $T$ has a fixed point in $B_0(S_R)$ then  $R$ is a Latt\`es map. 
Otherwise the Ces\`aro averages $A_n(\phi)$ converge to $0$ 
for every $\phi$ in $B_0(S_R)$. 
In particular, we have 
$$
1=\|\alpha\|_T=\lim \|A_n(\alpha - \phi_0)\|_T \leq \|\alpha-\phi_0\|_T<1,
$$ 
which is a contradiction. 
Thus $T$ has a fixed point in $B_0(S_R)$ and $R$ is a Latt\`es map. \end{proof}

We say that $R$ is \textit{dissipative} 
if the dissipative set of $R$ is the whole Riemann sphere; in other words, the 
conservative set has Lebesgue measure zero. 

\begin{theorem}\label{th.dissipative}
Let $R$ be a dissipative map. 
Then, for any $\alpha$ in $B_0(S_R)$, the orbit under
the Thurston operator $T^n(\alpha)$ converges weakly to $0$. 
\end{theorem}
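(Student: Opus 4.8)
The plan is to transfer the question to the dual space $A(S_R)$ via the identifications of Section~\ref{sec.Thurs}, reduce weak convergence of $\{T^n\alpha\}$ in $B_0(S_R)$ to pointwise convergence of $\{(R^*)^n l\}$ on $S_R$, and then extract the latter from dissipativity through Lemma~\ref{lm.con.dis}.

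First I would fix the duality set-up. The Thurston operator leaves $B_0(S_R)$ invariant, one has $B_0(S_R)\cong A_*(S_R)$ and $B_0(S_R)^*\cong A(S_R)$, and $(T|_{B_0(S_R)})^*=R^*$ acting on $A(S_R)$. Hence for $\alpha\in B_0(S_R)$ and $l\in A(S_R)=B_0(S_R)^*$ we have $l(T^n\alpha)=\langle\alpha,(R^*)^n l\rangle$, so the statement ``$T^n\alpha\to 0$ weakly in $B_0(S_R)$'' is equivalent to ``$(R^*)^n l\to 0$ in the topology $\sigma(A(S_R),B_0(S_R))=\sigma(A(S_R),A_*(S_R))$ for every $l\in A(S_R)$''. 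Since $R^*$ is a contraction on $A(S_R)$, the sequence $\{(R^*)^n l\}_n$ is bounded there, and since the $*$-weak topology on $A(S_R)$ induced by $A_*(S_R)$ coincides with the topology of pointwise convergence on bounded sequences, it suffices to prove that $(R^*)^n l\to 0$ pointwise on $S_R$ for each fixed $l\in A(S_R)$.

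To get this pointwise convergence I would first note that, iterating $|R^*(\phi)|\le |R^*||\phi|$ and using that $|R^*|$ is a positive operator, one has $|(R^*)^n l|\le |R^*|^n(|l|)$ almost everywhere on $\bar{\C}$, where $l$ is extended by zero on $K_R$ (this is compatible with the action on $A(S_R)$ since $S_R$ is backward invariant). Applying Lemma~\ref{lm.con.dis}, whose dissipative set is by hypothesis all of $\bar{\C}$, the series $\sum_n |R^*|^n(|l|)$ is finite almost everywhere, so $|R^*|^n(|l|)\to 0$ a.e.\ and therefore $(R^*)^n l\to 0$ almost everywhere on $S_R$. Now each $(R^*)^n l$ is holomorphic on $S_R$ by Proposition~\ref{prop.bounds}(4), and $\{(R^*)^n l\}_n$ is uniformly bounded in $L_1(S_R)$; by the mean value inequality for the subharmonic functions $|(R^*)^n l|$ the family is locally uniformly bounded on $S_R$, hence normal. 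A normal family of holomorphic functions that converges to $0$ almost everywhere converges to $0$ at every point: a locally uniform limit along any subsequence is holomorphic and vanishes a.e., hence is identically $0$. This gives the required pointwise convergence and, retracing the reduction, the weak convergence $T^n\alpha\to 0$.

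The step I expect to be most delicate is the first reduction: keeping track of the three nested identifications $B_0(S_R)\cong A_*(S_R)$, $B_0(S_R)^*\cong A(S_R)$ and $(T|_{B_0(S_R)})^*=R^*$ carefully enough that ``weak in $B_0(S_R)$'' really becomes ``$*$-weak, i.e.\ pointwise, in $A(S_R)$''; once that translation is in place the analytic content is a routine combination of the modulus estimate, Lemma~\ref{lm.con.dis}, and Montel's theorem. Let me also remark that this argument yields more than the mean ergodicity of $T$ on $B_0(S_R)$ provided by Theorem~\ref{th.ThurstonMeanergodic}: for a dissipative map the whole orbit, not merely its Ces\`aro averages, converges weakly to zero.
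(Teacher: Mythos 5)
Your proposal is correct and follows essentially the same route as the paper's own proof: dominate $|(R^*)^n l|$ by $|R^*|^n(|l|)$, use Lemma~\ref{lm.con.dis} on the dissipative set $\bar{\C}$ to get almost-everywhere convergence to $0$, upgrade to pointwise convergence via normality of the $L_1$-bounded family of holomorphic functions, and conclude by duality. The only difference is that you spell out the duality bookkeeping ($B_0(S_R)^*\cong A(S_R)$, $(T|_{B_0(S_R)})^*=R^*$, and the identification of $*$-weak convergence of bounded sequences with pointwise convergence) that the paper compresses into the phrase ``by duality.''
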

 
\begin{proof}
Since the conservative set of $R$ has Lebesgue measure zero, 
by Lemma \ref{lm.con.dis},  we have that $\sum_{n} R^{*n}(\phi)(z)$ absolutely 
converges almost everywhere in $\C$ for every $\phi$ in $A(S_R)$.
 
In particular, it follows that $R^{*n}\phi$ converges to $0$ almost everywhere 
on $\C$. But  $R^{*n}\phi$ is a bounded sequence in $A(S_R)$. Therefore 
$R^{*n}\phi$ defines a normal family of holomorphic functions on $S_R$. Hence   
$R^{*n}\phi$ converges pointwise to $0.$ By duality, the $T$ orbit of any 
element in $B_0(S_R)$ weakly converges to $0$. 
\end{proof}

An operator satisfying the conclusion of the previous theorem is called 
\textit{weakly asymptotic}.

\begin{theorem}\label{th.kukareko}
Suppose that the measure of the postcritical set $P_R$ is 
$0$ and that the image of the closed unit ball in $B_0(S_R)$ under $Id-T$ is closed, 
then  $R$ satisfies Sullivan's conjecture.
\end{theorem}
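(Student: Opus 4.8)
The plan is to establish the two implications separately. The ``if'' direction is short. If $R$ is a flexible Latt\'es map (the only Latt\'es maps carrying an invariant Beltrami differential, by Lemma \ref{lemma.dichotomy}(3)), then $R$ is postcritically finite, so $A(S_R)$, and hence $B(S_R)=A^*(S_R)$, is finite dimensional; by \cite{MilLat} there is a non-zero invariant Beltrami differential $\mu$ on $\bar{\C}=J_R$. By Lemma \ref{lemma.fixed.Beltrami}(2) one has $\int_{\C}\mu\,\gamma_{v_0}\,|dz|^2\neq 0$ for some critical value $v_0$, so $l_\mu(\phi)=\int_{\C}\phi\,\mu\,|dz|^2$ is non-zero on $A(S_R)$, and Proposition \ref{prop.bounds}(1) together with invariance of $\mu$ gives $l_\mu\circ R^*=l_\mu$; thus $l_\mu$ is a non-zero fixed point of $T$ in $B(S_R)$. (Equivalently, $T$ is mean ergodic on the finite dimensional space $B(S_R)$ and one invokes the Separation principle.)

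For the converse, suppose $T$ has a non-zero fixed point $\alpha\in B(S_R)$. Since $T=(T|_{B_0(S_R)})^{**}=(R^*)^{**}$, this says $1$ is an eigenvalue of $T$, so $1\in\sigma(R^*)$ on $A(S_R)$. The principal step is to pass from the hypothesis that $Q:=(\mathrm{Id}-T)|_{B_0(S_R)}$ maps the closed unit ball of $B_0(S_R)$ onto a closed set to the conclusion that $Q$ has closed range, equivalently, by the Uniform Ergodicity lemma, that $T|_{B_0(S_R)}$, and hence its adjoint $R^*$ on $A(S_R)$, is uniformly ergodic. Here I would invoke the Banach-space structure of Section \ref{sec.Thurs}: $B_0(S_R)$ is isomorphic to a closed subspace of $c_0$ and has property (V) of Pe\l{}czy\'{n}sky, so $Q$ is either compact or restricts to an isomorphism onto its image on some subspace $Y\cong c_0$. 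If $Q$ is compact, then $T$ is uniformly ergodic by Theorem \ref{th.Qcompact}. If not, I would use the closedness hypothesis to exclude the remaining case: the Minkowski gauge of $Q(B_1)$ makes $Q$ an isometry of $B_0(S_R)/\ker Q$ onto its range, and the idea would be that a subspace and a quotient of $c_0$ are too rigid to be linked by a bounded bijection whose closed unit balls correspond but whose norms are inequivalent, so that $Q$ is an isomorphism onto its image on all of $B_0(S_R)$; then $\ker Q=\mathrm{Fix}(T|_{B_0(S_R)})=\{0\}$, so the norm limit of the Ces\`aro averages of $T$ — being the bidual of that of $T|_{B_0(S_R)}$ — vanishes, contradicting $T\alpha=\alpha\neq 0$. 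Thus only the compact case occurs and $R^*$ is uniformly ergodic on $A(S_R)$.

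Granting this, the Uniform Ergodicity lemma turns $1\in\sigma(R^*)$ into the statement that $1$ is an isolated eigenvalue, so $R^*$ has a non-zero fixed point $f\in A(S_R)$. Since $m(P_R)=0$ the set $S_R$ has full measure, so $f$ is a non-zero element of $L_1(\C)$ fixed by the Ruelle operator. Proposition \ref{prop.invariantmeasure} then produces a fixed point of the Beltrami operator pairing non-trivially with $f$, and shows $|f|\,|dz|^2$ is a finite invariant measure, whose support lies in $SC(R)$ by Proposition \ref{prop.Krengel}. By Lemma \ref{lemma.dichotomy}, either $SC(R)\cap J_R\subset P_R$ or $SC(R)=\bar{\C}$. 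In the first case $\mathrm{supp}(f)\cap J_R$ lies in the null set $P_R$, so $f$ is supported, up to measure zero, on the Fatou set, where by the classification of Fatou components and Proposition \ref{prop.Krengel} the only conservative pieces are rotation domains, on which an integrable holomorphic function cannot be an invariant density unless it vanishes — contradicting $f\neq 0$. Hence $SC(R)=\bar{\C}$, so $J_R=\bar{\C}$ and $C(R)=\bar{\C}\neq P_R$, and part (3) of Lemma \ref{lemma.dichotomy} forces $R$ to be a flexible Latt\'es map.

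The hard part is the functional-analytic step in the second paragraph. For a general power bounded operator, closedness of the image of the unit ball neither implies nor is implied by closed range, and even mean ergodicity together with closedness of the unit-ball image fails to give uniform ergodicity (diagonal examples on $\ell_2$). What should rescue the implication is precisely the $c_0$-type structure of $B_0(S_R)$ together with its property (V): this constrains $\mathrm{Id}-T$ to be either compact or an isomorphism onto its image on a copy of $c_0$, and the closedness hypothesis should be exactly what is needed to exclude the second possibility. Isolating and proving the precise Banach-space lemma that effects this is the technical heart of the argument; everything downstream is an assembly of Theorem \ref{th.Qcompact}, Proposition \ref{prop.invariantmeasure} and the conservative/dissipative dichotomy of Lemma \ref{lemma.dichotomy}.
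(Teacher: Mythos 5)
Your skeleton coincides with the paper's: the closedness of the image of the unit ball is used to force uniform ergodicity of $T$ on $B_0(S_R)$; uniform ergodicity together with the fixed point in $B(S_R)$ makes $1$ an isolated eigenvalue and produces a non-zero fixed point in $B_0(S_R)$; and the conservative/dissipative dichotomy then forces Latt\'es. The ``if'' direction is also as in the paper. However, the step you yourself single out as the technical heart is a genuine gap, and the route you sketch for it would not work as stated. The property (V) alternative applied to $Q=Id-T$ produces only \emph{one} infinite-dimensional subspace $Y\cong c_0$ on which $Q$ is an isomorphism onto its image; it says nothing about $Q$ on the rest of $B_0(S_R)$, so you cannot upgrade this to injectivity or closed range of $Q$ on the whole space, and the ``Minkowski gauge / rigidity of $c_0$'' idea is a hope rather than an argument. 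As you correctly observe, for a general power bounded operator closedness of the image of the unit ball is independent of closedness of the range, so a specific theorem must be invoked here.

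The paper supplies exactly that theorem: Theorems 2.3 and 3.3 of \cite{FonfLinRubinov} state that, for a power bounded operator on a separable Banach space, if the image of the unit ball under $Id-T$ is closed then either $T$ is uniformly ergodic or $\overline{(Id-T)(B_0(S_R))}$ contains an isomorphic copy of an infinite-dimensional dual Banach space. The second alternative is then excluded using the $c_0$-structure from Section \ref{sec.Thurs}: any infinite-dimensional closed subspace of $c_0$ contains a further copy of $c_0$; the resulting embedding $h:c_0\to Y$ into the dual space $Y$ extends, by $*$-weak compactness of the unit ball of $Y$, to an operator $O:\ell_\infty\to Y$; the Grothendieck property of $\ell_\infty$ and separability of $Y$ force $O$, hence $h$, to be weakly compact; and a separable space with weakly compact unit ball is reflexive, contradicting the non-reflexivity of $c_0$. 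Without this lemma (or an equivalent) your implication from the hypothesis to uniform ergodicity is unproven, and Theorem \ref{th.Qcompact} alone does not rescue it, since nothing guarantees you are in the compact case. A secondary difference: once the non-zero fixed point in $B_0(S_R)$ is obtained, the paper excludes the non-Latt\'es alternative via Theorem \ref{th.dissipative} (a dissipative map has weakly asymptotic $T$-orbits on $B_0(S_R)$, hence no non-zero fixed points there), which avoids the rotation-domain case analysis you sketch and which you would still need to justify in detail.
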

\begin{proof} 
We use Theorem 2.3 and Theorem 3.3  in \cite{FonfLinRubinov}, which state 
that, for a separable Banach space $X$ and power-bounded operator 
$T$, if the image of the closed unit ball under $(Id-T)$ is closed then either $T$ is 
uniformly ergodic or the space $\overline{(Id-T)(X)}$ contains an isomorphic 
copy of an infinite dimensional dual Banach space. 

Now let us show that $T$ is uniformly ergodic on $B_0(S_R)$. By contradiction, 
if $T$ is not uniformly ergodic then the space $\overline{(Id-T)(B_0(S_R))}$ 
contains an infinite dimensional dual Banach space $Y$. By Theorem 1 in 
\cite{BonetWolf}, 
$B_0(S_R)$ is isomorphic to a closed subspace of $c_0$. On the other hand, any 
closed infinite dimensional subspace of $c_0$ contains a closed subspace 
isomorphic to $c_0.$ Hence $Y$ contains an isomorphic copy of $c_0.$ Let $h: 
c_0\rightarrow Y$ be this isomorphism onto the image. 
Since $Y$ is a dual space, the closed unit ball in $Y$ is $*$-weak compact, so the 
isomorphism $h$ can be extended to a continuous linear operator $O:\ell_\infty
\rightarrow Y$.  Then $O^*:Y^*\rightarrow (\ell_\infty)^*$  is continuous with 
respect to the $*$-weak topology. As $Y$ is separable any bounded set of 
linear functionals contains a subsequence converging in the $*$-weak topology. 
Since $\ell_\infty$ is a Grothendieck space   $O^*$ is a 
weakly compact operator.  Thus $O$ and hence  $h$ are weakly compact. But a 
separable Banach space with weakly compact unit ball is reflexive, this 
contradicts the fact that $c_0$ is not reflexive. Therefore $T$ is uniformly 
ergodic on $B_0(S_R).$

If $T$ has a fixed point in $B(S_R)$, then $Id-T$ is not invertible in 
$B(S_R)$. Then $(Id-T):B_0(S_R)\rightarrow B_0(S_R)$ is not an isomorphism. 
Thus by the uniform ergodicity lemma the value $1$ is an isolated eigenvalue 
for $T$ in $B_0(S_R)$. Then the value $1$ is an isolated eigenvalue for $R^*$ in $A(S_R)$. Hence $R^*$ is mean-ergodic on $A(S_R)$. Therefore
by hypothesis $R^*$ is mean-ergodic in $Hol(R)$ with topology inherited 
by $L_1(J_R)$. Then Theorem \ref{th.MeanErg.1} finishes the proof.

\end{proof}

Let us add some comments about the closeness of the image  of 
the closed unit ball under $(Id-T)$. Since $B(S_R)$ is a dual space, the image 
of its closed unit ball under $(Id-T)$ is always closed. When the image of the closed unit 
ball of $B_0(S_R)$ under $(Id-T)$ is not closed, consider the space 
$$X=cl((Id-T)^{-1}(B_0(S_R))\subset B(S_R),$$ then again on $X$, the image of 
the closed unit ball under $(Id-T)$ is closed. But $X$ is invariant under $T$ 
and $B_0(S_R)\oplus Fix(T,B(S_R)) \subset X.$  Then we have the following 
result. 
\begin{proposition} Let $X$ be as in the discussion above. 
 Assume that $X$ satisfies one of the following conditions: 
\begin{enumerate} 
 \item $X$ is separable.
\item $X$ is a Grothendieck space.
\item The operator $T:X\rightarrow X$ is mean-ergodic.
\end{enumerate}
Then $T:B(S_R)\rightarrow B(S_R)$ is uniformly ergodic.
\end{proposition}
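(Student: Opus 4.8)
The plan is to treat each of the three hypotheses in turn and in each case reduce to the dichotomy established in Theorem~\ref{th.kukareko}, namely that once the image under $(Id-T)$ of the unit ball is closed (which is automatic on $X$ by the remark preceding the proposition), uniform ergodicity of $T$ follows from either separability, the Grothendieck property, or mean ergodicity. First I would record the structural facts already assembled about $X$: it is a closed, $T$-invariant subspace of the dual space $B(S_R)$ that contains $B_0(S_R)\oplus Fix(T,B(S_R))$, and on $X$ the image of the unit ball under $(Id-T)$ is closed. Because $X$ is the norm closure of $(Id-T)^{-1}(B_0(S_R))$, the restriction $T|_X$ is still a power-bounded contraction in the Teichm\"uller norm, and its dual is a corresponding restriction of $R^*$; so the whole apparatus of Section~\ref{section4} (the Mean, Uniform, and Uniform Ergodicity lemmas) applies verbatim to $T|_X$.

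The heart of the argument is to show $T|_X$ is uniformly ergodic under each of (1), (2), (3), and then to \emph{transfer} uniform ergodicity from $X$ up to all of $B(S_R)$. For (1): since $X$ is separable and the image of its unit ball under $(Id-T)$ is closed, I would invoke Theorems~2.3 and~3.3 of \cite{FonfLinRubinov} exactly as in the proof of Theorem~\ref{th.kukareko} — either $T|_X$ is uniformly ergodic, or $\overline{(Id-T)(X)}$ contains an infinite-dimensional dual Banach space; the latter is excluded by running the $c_0$/Grothendieck weak-compactness contradiction from that proof (any infinite-dimensional closed subspace of $c_0$ contains a copy of $c_0$, which cannot be a dual space since a separable space with $*$-weakly compact unit ball is reflexive and $c_0$ is not). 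Here one must first check $X$ embeds in $c_0$; this follows because $X\subset B(S_R)$ need not itself sit in $c_0$, so I would instead argue on $\overline{(Id-T)(X)}\subset B_0(S_R)$, which \emph{is} a subspace of $c_0$ by \cite{BonetWolf}. For (2): if $X$ is a Grothendieck space then, being a closed subspace of the dual space $B(S_R)$ and containing $B_0(S_R)$, it also has the Dunford–Pettis property inherited appropriately, so Lemma~\ref{Lotz.lemma}(1) applies once we know $T|_X$ is mean ergodic — and mean ergodicity is itself forced because on a Grothendieck space with DPP a power-bounded operator whose unit-ball image under $Id-T$ is closed has $1$ isolated in the spectrum (Uniform Ergodicity lemma), hence is mean ergodic; alternatively run Theorem~\ref{th.meanergosep}-style reasoning. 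For (3): mean ergodicity of $T|_X$ is assumed outright, and combined with the closedness of $(Id-T)$(unit ball of $X$) the Uniform Ergodicity lemma gives $1$ isolated in $\sigma(T|_X)$, i.e. uniform ergodicity.

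Finally I would lift uniform ergodicity from $X$ to $B(S_R)$. Since $B_0(S_R)\oplus Fix(T,B(S_R))\subset X$ and $X$ is $T$-invariant, uniform ergodicity of $T|_X$ gives a direct splitting $X = Fix(T,X)\oplus (Id-T)X$ with $(Id-T)X$ closed and $T|_{(Id-T)X}$ having spectral radius bounded away from $1$ in an appropriate sense, and in particular $(Id-T)B_0(S_R)$ is closed in $B_0(S_R)$; by the Uniform Ergodicity lemma applied to $T$ on $B_0(S_R)$, the operator $T$ is uniformly ergodic on $B_0(S_R)$. Since $T = (T|_{B_0(S_R)})^{**}$ by the observation before Theorem~\ref{th.Qcompact}, and the biadjoint of a uniformly ergodic operator is uniformly ergodic (the Ces\`aro averages of $T$ on $B(S_R)$ are the biadjoints of those on $B_0(S_R)$, and $\|A_n((T|_{B_0})^{**}) - A_m((T|_{B_0})^{**})\| = \|A_n(T|_{B_0}) - A_m(T|_{B_0})\|$), it follows that $T$ is uniformly ergodic on $B(S_R)$, as claimed. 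The main obstacle I anticipate is the bookkeeping around which ambient $c_0$-embedding or Grothendieck/DPP property survives the passage to the auxiliary space $X$: $X$ is a subspace of the dual $B(S_R)$ rather than of $B_0(S_R)$, so the cleanest route is to phrase all the functional-analytic dichotomies on $\overline{(Id-T)(X)}\subset B_0(S_R)$ rather than on $X$ itself, exactly mirroring the structure of the proof of Theorem~\ref{th.kukareko}.
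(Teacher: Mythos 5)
Your case (1) and your final lifting step (uniform ergodicity restricts to the closed $T$-invariant subspace $B_0(S_R)$, then passes to biduals since $T=(T|_{B_0(S_R)})^{**}$) are in line with what the paper does. But cases (2) and (3) each contain a genuine gap. In case (3) you argue that mean ergodicity of $T|_X$ together with closedness of the image of the \emph{unit ball} of $X$ under $(Id-T)$ lets you invoke the Uniform Ergodicity lemma to get $1$ isolated in $\sigma(T|_X)$. That lemma requires the \emph{range} $(Id-T)X$ to be closed, and closedness of the image of the unit ball does not imply closedness of the range: an injective compact operator on a Hilbert space with dense, non-closed range maps the closed unit ball onto a closed set. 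Moreover, the dichotomy of Theorems 2.3 and 3.3 of \cite{FonfLinRubinov} that drives the proof of Theorem \ref{th.kukareko} is stated for \emph{separable} spaces, and $X$ is not assumed separable in case (3). The paper's route is different: Proposition 3.8 of \cite{FonfLinRubinov} states precisely that mean ergodicity of $T$ on $X$ forces the image of the unit ball of $B_0(S_R)$ under $(Id-T)$ to be closed, and then the argument of Theorem \ref{th.kukareko} runs on $B_0(S_R)$, which \emph{is} separable (it embeds in $c_0$).

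In case (2) you assert that $X$ inherits the Dunford--Pettis property ``appropriately'' and that mean ergodicity of $T|_X$ is ``forced'' via the Uniform Ergodicity lemma. Neither is justified: the Dunford--Pettis property passes only to \emph{complemented} closed subspaces, and there is no reason for $X$ to be complemented in $B(S_R)$; and the mean-ergodicity claim repeats the unit-ball/range confusion above and is in any case circular, since Lemma \ref{Lotz.lemma} needs mean ergodicity as input. The paper uses the Grothendieck hypothesis exactly once and differently: every bounded operator from a Grothendieck space into a separable space is weakly compact, so $(Id-T):X\rightarrow B_0(S_R)$ is weakly compact, hence so is its restriction $(Id-T):B_0(S_R)\rightarrow B_0(S_R)$; from this the paper concludes that the image of the unit ball of $B_0(S_R)$ under $(Id-T)$ is closed and Theorem \ref{th.kukareko} applies. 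So your overall strategy --- reduce all three hypotheses to the closed-unit-ball-image condition of Theorem \ref{th.kukareko} --- is the right one, but in (2) and (3) the argument must be relocated from $X$ to $B_0(S_R)$ and must use the correct statements from \cite{FonfLinRubinov} rather than the Uniform Ergodicity lemma.
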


\begin{proof}
 If $X$ is separable, since the image of the closed unit ball under $(Id-T)$ is closed 
and $\overline{(Id-T)(X)}=B_0(S_R)$ then, as in the proof of Theorem \ref{th.kukareko},  either  $T$ is 
uniformly ergodic or $B_0(S_R)$ contains an isomorphic copy of 
infinite dimensional dual Banach space. But as the proof of Theorem \ref{th.kukareko} shows, $B_0(S_R)$ can not contain such a copy. Hence $T$ is uniformly ergodic. 

If $X$ is a Grothendieck space, then $(Id-T):X\rightarrow B_0(S_R)$ is 
weakly compact. It follows that $(Id-T):B_0(S_R)\rightarrow B_0(S_R)$ is 
also weakly compact. Recall that every endomorphism $Q$ of $B_0(S_R)$ is either compact or there exists an infinite dimensional subspace $Y$, isomorphic to $c_0$, such that the restriction of $Q$ on $Y$ is an isomorphism onto its image. We conclude that 
$(Id-T)$ is a compact endomorphism of $B_0(S_R)$, and by duality $(Id-T)$ is also compact on $B(S_R)$.  Thus $T$ is uniformly ergodic by Theorem \ref{th.Qcompact}.

Finally, if $T:X\rightarrow X$ is mean-ergodic then Proposition 3.8 in 
\cite{FonfLinRubinov} states precisely that the image of the closed unit ball in 
$B_0(S_R)$ under $(Id-T)$ is closed and the arguments of Theorem 
\ref{th.kukareko} apply again. \end{proof}

In the next section  we analyse further properties of $X$ in a more general 
setting. 

\section{Hamilton-Krushkal sequences}

In this section, to avoid cumbersome calculations and definitions, we consider 
rational maps $R$ satisfying 
two conditions:
\begin{itemize}
\item First, there is no non-trivial quasiconformal deformation supported 
on the Fatou set. That is, by definition, every invariant Beltrami differential supported
on the Fatou set defines a zero functional on $A(S_R)$. Other characterizations are given by Proposition \ref{prop.inducedinv}.
\end{itemize} 

As mentioned on the introduction, this condition does not impose any restriction on the study of Sullivan's 
conjecture if the Julia set is connected. 
\begin{itemize}
\item Second, the postcritical set $P_R$ does not support an absolutely 
continuous invariant measure with respect to the Lebesgue measure.
\end{itemize}

In other words, the measure of the intersection of $P_R$ with the strongly 
conservative set $SC(R)$ is zero. In the last section we will discuss and give 
partial results for the case where $P_R\cap SC(R)$ has positive measure. 

Let us consider the elements $\gamma_v(z)$ with $v$ a critical value. 
Let $D=\{A_n(\gamma_{v_i})\}$ be the set of Ces\`aro averages for all 
$\gamma_{v_i}$ with $v_i$ in the critical value set $V(R)$. 
A sequence $\{\phi_k\}$ in $A(S_R)$ is called \textit{degenerating, 
non-normalized}, if there are constants $C$ and $\epsilon>0$ with $\epsilon 
<\|\phi_k\|<C$ such that $\phi_k$ converges to $0$ pointwise.

By Bers' isomorphism theorem, we identify $A^*(S_R)$ with the space 
$B(S_R)$ and consider the seminorm on $B(S_R)$ given by
$$
K(l)=\sup \limsup_k(|l(\phi_k(z))|)
$$
where the supremum is taken over all sequences 
$\{\phi_k\}$ in $D$.

Note that a sequence in $D$ is either degenerating or precompact in norm. 
Indeed, if the sequence $\{A_n(\gamma_v)\}$ is not degenerating and does not 
converge in norm, then there exists a subsequence  which converges pointwise 
and 
locally uniformly to a non-zero limit which is fixed by the Ruelle operator. By 
Lemma \ref{lemma.dichotomy} and Proposition 
\ref{prop.invariantmeasure}, the map $R$ is a Latt\`es map.  By Theorem 
\ref{th.meanergosep}, the Ruelle operator $R^*$ is uniformly ergodic. Then $A_n(\gamma_v)$ converges in norm for every critical value $v$ which is a contradiction.
  
The \textit{Hamilton-Krushkal space} $HK(R)$ is the zero set of $K$.  
Since we have $K(l)\leq \|l\|$ on $B(S_R)$, the space $HK(R)$ is a closed 
subspace of $B(S_R)$.  

A subspace $Y$ of a Banach space $X$ is  called \textit{coseparable} whenever $X/Y$ is 
separable.

\begin{theorem}\label{th.coseparable}
A rational map $R$ satisfies Sullivan's conjecture if and only if $HK(R)$ is 
coseparable in $A^*(S_R)$. 
\end{theorem}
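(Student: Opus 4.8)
The plan is to connect the analytic content of Sullivan's conjecture — absence of nontrivial invariant Beltrami differentials, unless $R$ is flexible Lattès — with the Banach-space dichotomy for $B_0(S_R)$ recorded in Section \ref{sec.Thurs}, using the Hamilton-Krushkal seminorm $K$ as the bridge. First I would unwind the definition: $HK(R)$ is the kernel of $K$, and $K(l)=\sup|l(\phi_k)|$ over degenerating sequences drawn from $D=\{A_n(\gamma_{v_i})\}$. The key observation, already half-established in the text, is that a sequence in $D$ is either precompact in norm or degenerating, and a nonzero norm-limit of Cesàro averages forces $R$ to be flexible Lattès by Lemma \ref{lemma.dichotomy}, Proposition \ref{prop.invariantmeasure} and Theorem \ref{th.meanergosep}. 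So, away from the Lattès case, every sequence in $D$ is either degenerating or converges to $0$ in norm; this lets me identify $HK(R)$ with the annihilator of the (closed span of the) set of $*$-weak accumulation points arising from degenerating subsequences of $D$, i.e. with the subspace of $B(S_R)\cong A^*(S_R)$ that annihilates a certain closed subspace $Z\subset A(S_R)$ built from the Cesàro orbits of the $\gamma_v$'s. Consequently $A^*(S_R)/HK(R)$ is isomorphic to the dual $Z^*$ of that closed subspace.

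Next I would translate "$R$ satisfies Sullivan's conjecture" into a statement about $Z$. Since by hypothesis there is no nontrivial quasiconformal deformation on the Fatou set and $P_R\cap SC(R)$ has measure zero, any nonzero invariant functional on $A(S_R)$ comes from an invariant Beltrami differential supported on $J_R$; by Lemma \ref{lemma.fixed.Beltrami}(2) such a differential is detected by some $\gamma_{v_0}$, hence is nonzero on $Z$. Therefore Sullivan's conjecture (no such differential, or $R$ flexible Lattès) is equivalent to: either $R^*$ is mean ergodic on $Z$ with the Cesàro averages $A_n(\gamma_v)$ all converging to $0$, or $R$ is flexible Lattès. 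In the first alternative, $Z=\overline{(Id-R^*)Z}$ by part (2) of the facts in Section \ref{section4}, and then $Z\subset A_*(S_R)\cong B_0(S_R)$, which by the Bonet-Wolf theorem is (isomorphic to) a closed subspace of $c_0$; hence $Z$ is separable, so $Z^*\cong A^*(S_R)/HK(R)$ is separable (the dual of a separable subspace of $c_0$ is $\ell_1$-like and separable). In the Lattès case $A(S_R)$ is finite-dimensional and everything is trivially separable. This gives the forward direction.

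For the converse, suppose $HK(R)$ is coseparable, so $Z^*$ is separable, and suppose for contradiction that $J_R$ supports a nonzero invariant Beltrami differential while $R$ is not flexible Lattès. Then by Theorem \ref{th.meanergosep}, $1\in\sigma(R^*)$ but $R^*$ is not uniformly ergodic on $A(S_R)$; I want to derive a contradiction from separability of $Z^*$ via the $c_0$/Grothendieck machinery. Here is where the main obstacle lies: I must show that if $T$ (equivalently $R^*$) fails to be uniformly ergodic on the relevant space then $\overline{(Id-T)B_0(S_R)}$ — or the analogous $(Id-R^*)$-image inside $Z$ — contains an isomorphic copy of $c_0$, whose dual $\ell_\infty$ is non-separable, contradicting separability of $Z^*$; this is exactly the Dunford-Pettis/Grothendieck argument used in the proof of Theorem \ref{th.kukareko}, but now I need it without assuming the unit-ball image is closed, so I should instead invoke the property (V) of Pełczyński dichotomy (Lemma 3.3.A/Theorem 3.3.B of \cite{HarmandWWerner}) recorded in Section \ref{sec.Thurs}: either $Id-T$ restricted to $B_0(S_R)$ is weakly compact — forcing uniform ergodicity and hence, with $1\in\sigma(R^*)$, the Lattès case via Theorem \ref{th.meanergosep} — or there is a subspace isomorphic to $c_0$ on which $Id-T$ is an isomorphism onto its image, giving a non-separable quotient and contradicting coseparability of $HK(R)$. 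Reconciling the two pictures — the seminorm-theoretic description of $HK(R)$ via degenerating sequences in $D$, and the operator-theoretic $c_0$-dichotomy on $B_0(S_R)$ — and checking that a degenerating sequence in $D$ really does witness a $c_0$-copy in $\overline{(Id-T)B_0(S_R)}$ when $R^*$ is not mean ergodic, is the step I expect to require the most care; the rest is bookkeeping with the Bers isomorphism and the facts already assembled in Sections \ref{sec.Thurs}--\ref{section4}.
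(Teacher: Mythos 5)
Your forward direction is essentially salvageable but overbuilt, and it contains two slips: the inclusion ``$Z\subset A_*(S_R)\cong B_0(S_R)$'' is a type error, since $Z$ is a closed subspace of $A(S_R)$ while $A_*(S_R)$ sits inside the dual $A^*(S_R)$; and the step ``$Z$ separable $\Rightarrow Z^*$ separable'' is false for general Banach spaces ($\ell_1$ is separable, $\ell_\infty$ is not), so you would need to actually exhibit $Z$ as a closed subspace of $c_0$, which you have not done. The point is moot, though: under the standing assumptions, if there is no invariant Beltrami differential on $J_R$ then all the Ces\`aro averages $A_n(\gamma_v)$ converge to $0$ in norm, so no sequence drawn from $D$ can be degenerating (the norms must stay above some $\epsilon>0$), hence $K\equiv 0$, $HK(R)=A^*(S_R)$, and the quotient is $\{0\}$. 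This is what the paper does, together with finite-dimensionality in the Latt\'es case.

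The genuine gap is in the converse, and it is exactly the step you flag as ``requiring the most care.'' Coseparability of $HK(R)$ is a statement about the specific sequences $A_n(\gamma_v)$ (elements of $HK(R)$ annihilate every term of every degenerating sequence drawn from $D$), whereas the Pe\l{}czy\'nski/$c_0$-dichotomy you invoke concerns $(Id-T)$ acting on all of $B_0(S_R)$. Finding a copy of $c_0$ inside $\overline{(Id-T)B_0(S_R)}$ on which $Id-T$ is an isomorphism does not produce a non-separable quotient $A^*(S_R)/HK(R)$: there is no reason such a $c_0$-copy is ``seen'' by the seminorm $K$, i.e.\ separated from $HK(R)$, and you give no argument for this. (You also invoke Theorem \ref{th.meanergosep} without verifying its hypotheses.) The paper's converse uses coseparability directly and constructively: choose countably many $\alpha_i$ with $\mathrm{span}\{\alpha_i\}+HK(R)$ dense, diagonalize so that every $\alpha_i(A_{n_k}(\gamma_v))$ converges, observe that functionals in $HK(R)$ automatically converge along $A_{n_k}(\gamma_v)$ by the degenerating/norm-precompact dichotomy, conclude that $A_{n_k}(\gamma_v)$ converges weakly, upgrade to norm convergence by the Mean Ergodicity lemma, and then use Lemma \ref{lemma.fixed.Beltrami} to see that a nonzero invariant Beltrami differential on $J_R$ forces the limit $f_0=\lim A_{n_k}(\gamma_{v_0})$ to be a nonzero fixed point of $R^*$, whence $R$ is flexible Latt\'es by Lemma \ref{lemma.dichotomy} and Proposition \ref{prop.invariantmeasure}. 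You need some version of this argument; the abstract $c_0$-dichotomy does not substitute for it.
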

\begin{proof}
If there is no invariant Beltrami differential supported on the Julia set 
then, by Theorem \ref{th.MeanErg.1}, the action of $R^*$ on $L_1(J_R)$ is mean-ergodic and the Ces\`aro 
averages converge to $0$ in $L_1(J_R)$.  Thus by Proposition \ref{prop.inducedinv}, we have that $R^*$ is mean-ergodic on $A(S_R)$. 
So we get $HK(R)=A^*(S_R)$ and that the quotient is separable. 
If $R$ is Latt\`es, then $A^*(S_R)$ is finite dimensional and so is $HK(R)$. 

Conversely, assume that $HK(R)$ is coseparable. Then there exist a countable 
set $\{\alpha_i\}$ of elements in $A^*(S_R)$ such that $S=\{\alpha_i\}+ 
HK(R)$ is dense in $A^*(S_R)$. By induction, and a diagonal argument, we can 
pick a sequence $\{n_k\}$ such that for every $i$ and every critical value $v$ the sequence $\alpha_i(A_{n_k}(\gamma_v))$ converges.
Since $R^*$ is a contraction and $S$ is an everywhere dense subset 
of $A^*(S_R)$, the Ces\`aro averages $A_{n_k}(\gamma_v)$ converges weakly for 
every $v$. 
By the mean ergodicity lemma, the sequence $A_{n_k}(\gamma_v)$ 
converges in norm for all $v$. Let $\mu$ be a non-zero invariant Beltrami 
differential. 
By Lemma \ref{lemma.fixed.Beltrami} there exists a critical value $v_0$ such 
that 
$\int_\C  \mu(z) \gamma_{v_0}(z)\neq 0$. 
Thus the limit $f_0=\lim A_{n_k}(\gamma_{v_0})$ is a non-zero fixed point of 
the Ruelle operator in $A(S_R)$. Since the measure of $P_R\cap SC(R)$ is 
zero and $SC(R)$ contains $supp(f)$ then $SC(R)=\overline{\C}$ by part $(2)$ of Lemma \ref{lemma.dichotomy}. But $\mu(z)$ not $0$ almost everywhere, then by part (3) of Lemma \ref{lemma.dichotomy}  the map $R$ is a flexible Latt\`es map. 
\end{proof}

From the arguments in the previous proof, we have the following corollary. 

\begin{corollary}\label{cor.RuellemeanErg}
If $HK(R)$ is coseparable then the Ruelle operator $R^*$ is mean-ergodic on 
$A(S_R)$ equipped with the norm inherited from $L_1(J_R).$
\end{corollary}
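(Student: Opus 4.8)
**Corollary (coseparability implies mean ergodicity of $R^*$ on $A(S_R)$ with the $L_1(J_R)$-topology).**

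The plan is to extract from the proof of Theorem~\ref{th.coseparable} exactly the part that does not use the hypothesis on $P_R\cap SC(R)$, and to observe that it already yields norm-convergence of all Ces\`aro averages. Concretely, assume $HK(R)$ is coseparable in $A^*(S_R)$, so there is a countable set $\{\alpha_i\}\subset A^*(S_R)$ with $\{\alpha_i\}+HK(R)$ dense in $A^*(S_R)$. First I would reproduce the diagonal extraction: since each sequence $\{A_n(\gamma_v)\}_n$ (for $v$ a critical value, finitely many) is bounded in $A(S_R)$, a diagonal argument produces a single sequence $\{n_k\}$ along which $\alpha_i(A_{n_k}(\gamma_v))$ converges for every $i$ and every critical value $v$. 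On the elements of $HK(R)$ the averages $A_{n_k}(\gamma_v)$ are tested by the defining seminorm $K$; since a sequence in $D$ is either degenerating or norm-precompact, either $K$ kills it or it already converges in norm along a further subsequence. Combining density of $\{\alpha_i\}+HK(R)$ with the uniform bound $\|A_n(\gamma_v)\|\le 1$ (contractivity of $R^*$), I get that $A_{n_k}(\gamma_v)$ converges $*$-weakly in $A^*(S_R)$ for every critical value $v$.

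Next I would invoke the Mean Ergodicity lemma of Section~\ref{section4}: a $*$-weak (equivalently, weak, in this dual-space setting) accumulation point of the convex orbit $Conv(R^*,\gamma_v)$ is necessarily a fixed point of $R^*$, and in that situation $A_n(R^*)(\gamma_v)$ converges \emph{in norm}. Hence $\lim_n A_n(\gamma_v)$ exists in the norm of $A(S_R)$ for every critical value $v$. Since, by Lemma~\ref{lemma.linearfunc}'s decomposition argument (as used in the proof of Theorem~\ref{th.coseparable} and Lemma~\ref{lemma.linearfunc}), the orbit of the finitely many $\gamma_v$ under $R^*$, together with the finite-dimensional space $W$, spans all of $Hol(R)$ up to the closed $R^*$-invariant subspace $X=(Id-R^*)(Hol(R))$, norm-convergence of $A_n(\gamma_v)$ for the critical values $v$ propagates to norm-convergence of $A_n(R^*)(\phi)$ for every $\phi\in Hol(R)$. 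Indeed $A_n$ converges to $0$ on $\overline{(Id-R^*)(Hol(R))}$ by part (2) of the list in Section~\ref{section4}, and on the complementary finite-dimensional piece convergence is automatic.

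Finally I would transfer this to the $L_1(J_R)$-topology on $Hol(R)$: since $P_R\cap SC(R)$ need not have measure zero here, the cleanest route is to note that the conclusion of Theorem~\ref{th.coseparable}'s argument is really a statement about the operator $R^*$ acting on the normed space $Hol(R)$ with the $L_1(J_R)$ norm ($Hol_2$ in the paper's notation), whose completion embeds in $L_1(J_R)$ and on which $R^*$ is a contraction; the Ces\`aro averages of the spanning set $\{\gamma_v\}$ converge in that norm by the argument above, hence so do the averages of every element, which is precisely mean ergodicity of $R^*$ on $Hol(R)$ with the $L_1(J_R)$-topology. The main obstacle is bookkeeping: making sure the diagonal/density argument delivers $*$-weak convergence of $A_{n_k}(\gamma_v)$ \emph{simultaneously} for all (finitely many) critical values, and checking that the Mean Ergodicity lemma applies verbatim to the possibly non-complete space $Hol(R)$ rather than to $A(S_R)$ — this is handled by passing to the completion, where $R^*$ remains a power-bounded (contractive) operator and the lemma is available.
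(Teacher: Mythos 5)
Your proposal is correct and follows essentially the same route as the paper, which proves this corollary simply by pointing back to ``the arguments in the previous proof'' of Theorem~\ref{th.coseparable}: the diagonal/density extraction giving weak convergence of $A_{n_k}(\gamma_v)$, the Mean Ergodicity lemma upgrading this to norm convergence, and the decomposition $Hol(R)=X+W$ together with part (2) of the Section~\ref{section4} list to propagate convergence to all of $Hol(R)$ and then, by density, to $A(S_R)$. Your only slip is the parenthetical claim that $P_R\cap SC(R)$ ``need not have measure zero here'' --- that hypothesis is in force throughout the section as a standing assumption --- but since your argument never uses it, this is harmless.
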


Also, we have the following.

\begin{corollary}\label{cor.coseparable} 
The  space $HK(R)$ is coseparable if and only if 
$codim(HK(R))=0$. 
\end{corollary}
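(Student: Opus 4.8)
The plan is to deduce this from Theorem~\ref{th.coseparable} together with a close reading of its proof. The direction $codim(HK(R))=0\Rightarrow HK(R)$ coseparable is immediate: then $HK(R)=A^*(S_R)$, so the quotient $A^*(S_R)/HK(R)$ is the zero space, which is separable. For the converse I would assume $HK(R)$ is coseparable; by Theorem~\ref{th.coseparable} this forces $R$ to satisfy Sullivan's conjecture, so $R$ is either a flexible Latt\'es map or carries no non-zero invariant Beltrami differential supported on $J_R$. The goal is then to upgrade this to $HK(R)=A^*(S_R)$, i.e.\ to show that the seminorm $K$ vanishes identically. Since $K$ is a supremum over the degenerating sequences contained in the set $D=\{A_n(\gamma_{v_i})\}$ of Ces\`aro averages, it suffices to verify that $D$ contains no degenerating sequence at all.

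The key step is to show that, in each of the two cases, the full Ces\`aro sequence $\{A_n(\gamma_v)\}_n$ converges in the norm of $A(S_R)$ for every critical value $v$. For a flexible Latt\'es map this is immediate, because $R$ is postcritically finite, so $A(S_R)$ is finite dimensional, and a power bounded operator on a finite dimensional space is uniformly ergodic. In the remaining case, the absence of an invariant Beltrami differential on $J_R$ gives, exactly as in the proof of Theorem~\ref{th.MeanErg.1}, that $(Id-R^*)L_1(J_R)$ is dense in $L_1(J_R)$, so $A_n(\gamma_v)\to 0$ in the $L_1(J_R)$ norm; combined with the standing assumption of this section that the Fatou set carries no non-trivial quasiconformal deformation --- which by the separation principle means $A_n(\gamma_v)\to 0$ in the $L_1$ norm over $F_R$ --- and with the standing assumption $m(P_R\cap SC(R))=0$, which makes $P_R\cap J_R$ negligible, one obtains $A_n(\gamma_v)\to 0$ in $L_1(S_R)$, that is, in the norm of $A(S_R)$.

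Once every $\{A_n(\gamma_v)\}_n$ is known to be norm convergent, the exclusion of degenerating sequences from $D$ is a soft argument. A degenerating sequence $\{\phi_k\}\subset D$ converges pointwise almost everywhere to $0$ while $\|\phi_k\|>\epsilon$ for all $k$; after passing to a subsequence all its terms are Ces\`aro averages of a single $\gamma_v$, and then either the subsequence involves infinitely many distinct averages --- so it is a subsequence of the norm convergent $\{A_n(\gamma_v)\}_n$ and its limit is simultaneously $0$ (pointwise) and the common norm limit, forcing $\|\phi_k\|\to 0$, a contradiction --- or only finitely many values occur, so one of them recurs infinitely often and, being a pointwise a.e.\ limit of copies of itself, must be $0$, again a contradiction. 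Hence $K\equiv 0$, $HK(R)=B(S_R)\cong A^*(S_R)$, and $codim(HK(R))=0$. The step I expect to be the main obstacle is the second half of the key step: converting the abstract mean ergodicity of $R^*$ on $L_1(J_R)$ that coseparability supplies (cf.\ Corollary~\ref{cor.RuellemeanErg}) into genuine norm convergence of the concrete functions $A_n(\gamma_v)$ \emph{inside} $A(S_R)$; this is precisely where the two standing assumptions of the section are used, to control the Fatou contribution and the measure of $P_R\cap J_R$. Everything else --- the Latt\'es dichotomy furnished by Theorem~\ref{th.coseparable} and the combinatorial exclusion of degenerating sequences --- is routine.
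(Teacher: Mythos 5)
Your proof is correct and follows essentially the route the paper intends (it states the corollary without proof as a consequence of Theorem~\ref{th.coseparable}): norm convergence of every $\{A_n(\gamma_v)\}$ in $A(S_R)$ rules out degenerating sequences in $D$, so $K\equiv 0$ and $HK(R)=A^*(S_R)$. The one blemish is your claim that $m(P_R\cap SC(R))=0$ makes $P_R\cap J_R$ negligible --- this implication is false in general (the postcritical part of $J_R$ could have positive measure inside the weakly dissipative set) --- but it is also unnecessary, since $\bar{\C}=F_R\cup J_R$ already gives $\int_{S_R}|A_n(\gamma_v)|\leq \int_{F_R}|A_n(\gamma_v)|+\int_{J_R}|A_n(\gamma_v)|\to 0$ from the two convergences you establish.
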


\section{Amenability and compactness}

In this section our goal is to give compactness 
conditions for suitable operators under which a map $R$ satisfies the Sullivan 
conjecture. We will keep the technical assumptions given at the beginning of 
the previous section.

 For every critical value  $v$  of $R$ define the operator 
$E_v:B(S_R)\rightarrow \ell_\infty$ by the formula 
$$
E_v(\psi)=\left( \int_{\C} \lambda^{-2}(z) 
\overline{\psi}(z)A_{n}(\gamma_v)(z)|dz|^2\right )_{n=0}^{\infty}.
$$

In particular,  an element $\phi \in A^*(S_R)$ belongs to $HK(R)$ 
whenever  $E_v(\phi)\in c_0$ for every critical value $v$. On the image of 
$E_v$,  the Thurston operator $T$ acts as 

$$
\hat{T}_v(E_v(\psi))= E_v(T(\psi)).
$$
This formula defines $\hat{T}_v$ as a linear endomorphism, not necessarily 
continuous, of the image of $E_v$. 

A \textit{mean} $m$ on $\ell_\infty$ is a positive linear functional which 
satisfies three conditions: 
\begin{itemize}
\item $m(1,1,1,...)=1$, 

\item  If $\sigma$ is the shift $\sigma(a_1,a_2,...)=(a_2,a_3,...)$, 
then $m(x)=m(\sigma(x))$ for any $x\in \ell_\infty$.  

\item $\liminf |a_i|\leq  |m(a_1,a_2,...)|\leq \limsup|a_i|$. 
\end{itemize}

A mean is also known as a \textit{Banach limit} on $\ell_\infty$. 

A linear operator $O:X\to X$ (not necessarily continuous) defined on a linear 
subspace $X$ (not necessarily closed) of $\ell_\infty$ has an 
invariant mean if there is a mean on $\ell_\infty$ with non-zero restriction to 
$X$ that satisfies $m(O(\alpha))=m(\alpha)$ for $\alpha\in X$. 
We denote by $M(O)$ the set of all invariant means for $O$. 

\begin{lemma}\label{lm.invmeanempty}
The set $M(\hat{T}_v)$ is empty if and only if 
$E_v(A^*(S_R))$ consists exclusively of sequences that converge to $0$. 
\end{lemma}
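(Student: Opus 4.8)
The plan is to prove the two implications separately, with the interesting direction being the contrapositive of "if $M(\hat T_v)$ is empty then $E_v(A^*(S_R))\subset c_0$." First I would dispose of the easy direction: if $E_v(A^*(S_R))$ consists only of sequences converging to $0$, then for every $\phi\in A^*(S_R)$ we have $E_v(\phi)\in c_0$, so any mean $m$ on $\ell_\infty$ satisfies $m(E_v(\phi))=0$ by the third defining property of a mean ($\liminf|a_i|\le|m(a)|\le\limsup|a_i|$). Hence the restriction of any mean to $E_v(A^*(S_R))$ is identically zero, so by definition no invariant mean for $\hat T_v$ exists, i.e. $M(\hat T_v)=\emptyset$.

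For the converse I would argue contrapositively: suppose $E_v(A^*(S_R))$ contains a sequence $b=E_v(\phi_0)$ that does not converge to $0$; I want to produce an invariant mean for $\hat T_v$. The key observation is that $\hat T_v$ is conjugate, via $E_v$, to the Thurston operator $T$ on $A^*(S_R)\simeq B(S_R)$, and that $T$ is power bounded (it is a contraction in the Teichm\"uller norm); moreover the recursion $R^*(\gamma_v)=\frac{1}{R'(v)}\gamma_{R(v)}+w$ from Lemma \ref{lemma.linearfunc} shows how $A_n(\gamma_v)$ interacts with the dynamics. Concretely, the sequence $E_v(\phi)=\bigl(\int \lambda^{-2}\bar\phi\, A_n(\gamma_v)\bigr)_n$ has $n$-th entry $\langle A_n(\gamma_v),\phi\rangle$, and since $A_n(\gamma_v)$ is a Ces\`aro average of $R^*$, the shift $\sigma$ on this sequence corresponds, up to a vanishing error term of order $1/n$, to replacing $\phi$ by $T(\phi)$. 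That is, $\sigma(E_v(\phi)) - E_v(T(\phi))\to 0$, which is exactly the statement that $\hat T_v$ is "asymptotically the shift." I would then invoke the standard construction: take any Banach limit $L$ on $\ell_\infty$ (which exists by Hahn–Banach), and set $m(x)=L(x)$. Because $L$ is shift-invariant and annihilates $c_0$, the relation $\sigma(E_v(\phi))=E_v(T(\phi)) + o(1)$ gives $m(E_v(T\phi))=L(\sigma E_v(\phi))=L(E_v(\phi))=m(E_v(\phi))$, so $m$ restricted to $X=E_v(A^*(S_R))$ is $\hat T_v$-invariant. It remains to check $m$ is non-zero on $X$: this is where the hypothesis that $b=E_v(\phi_0)\notin c_0$ enters. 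A single Banach limit need not be non-zero on a given non-null sequence, but one can choose the Banach limit to see $b$: pass to a subsequence $\{n_k\}$ along which $b_{n_k}\to c\neq 0$, and build a mean that reads off this subsequential limit (a standard construction of generalized limits along a subsequence, still shift-invariant and still annihilating $c_0$). Then $m(b)=c\neq 0$, so $m|_X\neq 0$ and $m\in M(\hat T_v)$.

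The step I expect to be the main obstacle is verifying cleanly that $\sigma\circ E_v$ and $E_v\circ T$ agree up to a term tending to $0$, i.e. that passing to the shifted sequence genuinely corresponds to applying the Thurston operator modulo lower-order terms — this requires unwinding the Ces\`aro average $A_n$ of $R^*$ and the dual pairing, using $T=(R^*)^*$ on $B(S_R)$, and controlling the boundary term $\frac1n(\gamma_v - (R^*)^n\gamma_v)$ in the weak topology; here one uses that $\{(R^*)^n\gamma_v\}$ is norm bounded in $A(S_R)$ (Proposition \ref{prop.bounds}). A secondary subtlety is ensuring the chosen generalized limit simultaneously is shift-invariant, satisfies the $\liminf$–$\limsup$ sandwich (so that it really is a mean in the paper's sense, not merely a shift-invariant functional), and is non-zero on $b$; choosing it as a Banach limit composed with passage to the subsequence $\{n_k\}$ handles all three, but one must check the sandwich property survives the construction.
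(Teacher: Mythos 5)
Your easy direction and your identification of the key estimate $(\sigma-\hat T_v)E_v(A^*(S_R))\subset c_0$ coincide with the paper's proof, but there is a genuine gap in the final step, where you must produce a mean that is simultaneously shift-invariant and non-zero on $X=E_v(A^*(S_R))$. You propose to take a subsequence $\{n_k\}$ with $b_{n_k}\to c\neq 0$ and ``build a mean that reads off this subsequential limit,'' asserting it is still shift-invariant. That assertion fails in general: a generalized limit along a subsequence is not shift-invariant, and there exist bounded sequences outside $c_0$ on which \emph{every} shift-invariant mean vanishes. For instance $b=(1,-1,1,-1,\dots)$ satisfies $L(b)=L(\sigma b)=-L(b)$, hence $L(b)=0$ for every Banach limit, even though $1$ is a subsequential limit of $b$. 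By Lorentz's characterization, the set of values of Banach limits on a real sequence $b$ is the interval determined by the uniform Ces\`aro means of $b$, which can collapse to $\{0\}$ for non-null $b$; so the mere hypothesis $b=E_v(\phi_0)\notin c_0$ does not let you choose a mean that ``sees'' $b$.

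The paper closes this gap by a different device: from $h(A_{n_j}(\gamma_v))\to a\neq 0$ it extracts a $*$-weak accumulation point $l_0$ of $A^*_{n_j}(h)$ in the dual space $A^*(S_R)\simeq B(S_R)$; this $l_0$ is a fixed point of $T$, so $E_v(l_0)$ is the \emph{constant} sequence $(a,a,\dots)$, which lies in the subspace $X$ and on which \emph{every} mean takes the value $a\neq 0$. Hence an arbitrary Banach limit is already non-zero on $X$, and its $\hat T_v$-invariance on $X$ follows from shift-invariance together with $(\sigma-\hat T_v)X\subset c_0$, exactly as you argue. An alternative repair along your lines would be to observe that $\|A_{n+1}(\gamma_v)-A_n(\gamma_v)\|=O(1/n)$ forces $b_{n+1}-b_n\to 0$, and for such slowly varying sequences the range of Banach limits on $\mathrm{Re}\,b$ is the full interval $[\liminf\mathrm{Re}\,b_n,\limsup\mathrm{Re}\,b_n]$, so some Banach limit is non-zero on $b$; but this requires proof, and the subsequence construction you describe does not supply it.
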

\begin{proof}
By definition means are invariant under the shift and bounded by the supremum 
of the elements of the sequence, this implies that $c_0\subset ker(m)$ for 
every 
mean $m$. Hence, if  $E_v(A^*(S_R))\subset c_0$, so we get 
$M(\hat{T}_v)=\emptyset$. 

Conversely, if there is an element $h\in A^*(S_R)$ with $E_v(h) \in 
E_v(A^*(S_R))\setminus c_0$ then there is a subsequence $\{n_j\}$ such that 
$h(A_{n_j}(\gamma_v))$ converges to a non-zero number $a$. 
By duality this implies that $A^*_{n_j}(h)$ allows a subsequence which 
converges $*$-weakly to a non-zero element $l_0\in A^*(S_R)$ that satisfies $T(l_0)=l_0$. 
Then $E_v(l_0)=(l_0(\gamma_v), l_0(\gamma_v),...)$ and, 
since $E_v(A^*(S_R))$ is a subspace of $\ell_\infty$, 
we conclude that $E_v(A^*(S_R))$ contains the constant sequence $1$. 
This implies that $E_v(A^*(S_R))$ intersects the space of convergent sequences 
in a non empty set. 
On convergent sequences, the functional $l:\{c_i\}\mapsto \lim c_i$ is 
continuous. 
By the Banach limit theorem there exists an extension ${\cal 
L}$ to all of $\ell_\infty$ which is a mean. 

Next we show that ${\cal L}$ is $\hat{T}_v$-invariant on $E_v(A^*(S_R))$.  
In fact,  we have 
$$
|T(h)(A_n(\gamma_v)) - h(A_{n+1}(\gamma_v))|\leq \frac{4\|h\|\|\gamma_v\|}{n},
$$  
for $h\in A^*(S_R)$. So the difference 
$$
|T(h)(A_n(\gamma_v)) - h(A_{n+1}(\gamma_v))|
$$
converges uniformly to $0$ as $n$ tends to $\infty$ on any ball of 
$A^*(S_R)$, 
and hence
$(\sigma -\hat{T}_v) E_v(A^*(S_R))\subset c_0.$ Thus $(\sigma -\hat{T}_v) 
E_v(A^*(S_R))$ belongs to $ker({\cal L})$. 
The invariance of ${\cal L}$ with respect to $\sigma$ implies the invariance 
of ${\cal L}$ with respect to $\hat{T}_v$. Therefore $M(\hat{T}_v)$ is non 
empty. \end{proof}

Next we show the finiteness of $M(\hat{T}_v)$ in very special cases.

\begin{theorem}\label{th.finiteBanach}
The set $M(\hat{T}_v)$ is finite if and only if $M(\hat{T}_v)$ contains at most 
one element or, equivalently,  if and only if $E_v(A^*(S_R))$ consists 
exclusively of convergent sequences. 
\end{theorem}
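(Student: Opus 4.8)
The plan is to set $X:=E_v(A^*(S_R))\subset\ell_\infty$ and to prove the cycle of implications (3)$\Rightarrow$(2)$\Rightarrow$(1)$\Rightarrow$(3), where (1)--(3) are the three conditions in the statement and where each element of $M(\hat{T}_v)$ is read through its restriction to $X$ (this is harmless, since both $\hat{T}_v$ and the invariance condition only involve $X$). Two preliminary facts drive everything. First, as shown inside the proof of Lemma \ref{lm.invmeanempty}, $(\sigma-\hat{T}_v)(X)\subset c_0$; since every mean annihilates $c_0$ and is $\sigma$-invariant, $M(\hat{T}_v)$ is exactly the set of restrictions to $X$ of those means that do not vanish on $X$. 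Second, every $x=E_v(\psi)\in X$ is slowly varying: a direct estimate of the kind carried out in the proof of Lemma \ref{lm.invmeanempty} (using the identity $A_{n+1}(\gamma_v)-A_n(\gamma_v)=\frac1{n+1}((R^*)^n\gamma_v-A_n(\gamma_v))$ together with $\|R^*\|\le 1$) gives $|x_{n+1}-x_n|\le C/n$ for a constant $C=C(\psi,v)$.

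The technical core is the elementary lemma: if a bounded sequence $x$ satisfies $|x_{n+1}-x_n|\le C/n$ for all $n$ and every mean takes one and the same value $L$ on $x$, then $x_n\to L$. Indeed, slow variation gives $|x_n-x_k|\le C(n-k)/k$ for $n\ge k$, hence $|x_k-\frac1N\sum_{n=k}^{k+N-1}x_n|\le CN/(2k)$ for all $k,N$; on the other hand, ``all means equal $L$ on $x$'' means, by Lorentz's theorem on almost convergence, that $\frac1N\sum_{n=k}^{k+N-1}x_n\to L$ uniformly in $k$, so fixing $N$ with $|\frac1N\sum_{n=k}^{k+N-1}x_n-L|<\delta$ for all $k$ yields $\limsup_k|x_k-L|\le\delta$, and $\delta\to0$ finishes the argument. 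This is the step where one genuinely uses that the coordinates of an element of $X$ are the Ces\`aro averages $A_n(\gamma_v)$ rather than arbitrary bounded numbers.

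With these in hand the three implications are short. For (3)$\Rightarrow$(2): if $X$ consists only of convergent sequences, then either $X\subset c_0$, so $M(\hat{T}_v)=\emptyset$ by Lemma \ref{lm.invmeanempty}, or $x\mapsto\lim_n x_n$ is a nonzero functional on $X$; since means agree with $\lim$ on convergent sequences, in the second case every element of $M(\hat{T}_v)$ restricts on $X$ to this one nonzero functional, so $M(\hat{T}_v)$ is a singleton. The implication (2)$\Rightarrow$(1) is trivial. For (1)$\Rightarrow$(3) we argue by contraposition: if some $x=E_v(\psi)\in X$ fails to converge, the core lemma shows that not all means agree on $x$, so there are means $m_1\neq m_2$ with $m_1(x)\neq m_2(x)$; the convex combinations $m_t=tm_1+(1-t)m_2$, $t\in[0,1]$, are again means, take pairwise distinct values $m_t(x)=tm_1(x)+(1-t)m_2(x)$ on $x\in X$, and all but at most one of them is nonzero on $x$. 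Hence $M(\hat{T}_v)$ contains a continuum of pairwise distinct elements, contradicting (1). Chaining the implications gives (1)$\Leftrightarrow$(2)$\Leftrightarrow$(3), and in particular that finiteness of $M(\hat{T}_v)$ forces $E_v(A^*(S_R))$ to consist of convergent sequences.

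The main obstacle is the core lemma relating slow variation, almost convergence, and convergence: this is what upgrades the weak, Ces\`aro-type control available on $X$ to honest convergence, and without it the ``non-convergent but almost convergent'' case would not be excluded. A secondary, purely bookkeeping point is the passage between ``mean'' in the present sense and an arbitrary Banach limit, together with the verification that the means produced in the step (1)$\Rightarrow$(3) actually lie in $M(\hat{T}_v)$; the latter is immediate from $(\sigma-\hat{T}_v)(X)\subset c_0$, which makes every mean that is nonzero on $X$ automatically $\hat{T}_v$-invariant.
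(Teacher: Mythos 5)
Your proof is correct, but it reaches the hard implication by a genuinely different route than the paper. The paper's proof of ``unique invariant mean $\Rightarrow$ all sequences in $E_v(A^*(S_R))$ converge'' goes through the function space: it uses the presence of the constant sequence $(1,1,\dots)$ and an invariant Beltrami differential $\mu$ with $\int\mu\gamma_v\neq 0$, the Banach limit theorem to get Ces\`aro convergence of the evaluations, Dunford--Pettis/uniform integrability and normal families to produce a nonzero locally uniform limit $\phi_0$ of the averaged averages, and finally the dichotomy (established earlier from Lemma \ref{lemma.dichotomy}, Proposition \ref{prop.invariantmeasure} and Theorem \ref{th.meanergosep}) that $\{A_n(\gamma_v)\}$ is either degenerating or norm-precompact, to conclude that $A_n(\gamma_v)$ converges \emph{in norm} in $A(S_R)$, whence every evaluation converges. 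You instead prove a purely sequence-theoretic Tauberian lemma: each coordinate sequence $x_n=E_v(\psi)_n$ satisfies $|x_{n+1}-x_n|\le C/n$ (via the identity $A_{n+1}(\gamma_v)-A_n(\gamma_v)=\frac{1}{n+1}\bigl((R^*)^n\gamma_v-A_n(\gamma_v)\bigr)$ and $\|R^*\|\le 1$), and slow variation plus almost convergence (Lorentz's characterization of sequences on which all Banach limits agree) forces honest convergence. This bypasses the degenerating/precompact dichotomy and all the dynamical input, at the price of yielding only coordinatewise convergence of the evaluations rather than norm convergence of $A_n(\gamma_v)$ itself --- which is all the statement requires. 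Both arguments share the two bookkeeping points you flag: $(\sigma-\hat T_v)E_v(A^*(S_R))\subset c_0$, which makes every mean nonvanishing on the image automatically invariant, and the reading of $M(\hat T_v)$ through restrictions to $E_v(A^*(S_R))$ (the paper implicitly does the same, e.g.\ when it says ``the only invariant mean is given by the restriction of the limit functional''); and both use the convexity of the set of invariant means for the finiteness--singleton equivalence, you via the explicit segment $m_t=tm_1+(1-t)m_2$.
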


\begin{proof}
The first equivalence is clear after one notices that $M(\hat{T}_v)$ is convex, 
so we just worry about the second. 

If $M(\hat{T}_v)$ is empty, by Lemma \ref{lm.invmeanempty} we are done. 
Otherwise, again by the arguments on Lemma \ref{lm.invmeanempty}, if 
$E_v(A^*(S_R))\subset c$ then the only invariant mean is given by the 
restriction of the limit functional, $\{a_n\}\mapsto \lim a_n.$  
Reciprocally, we assume that there is only one invariant mean $m$ and set 
$X=\overline{E_v(A^*(S_R))}$. 
Again, by the same arguments given in Lemma \ref{lm.invmeanempty},  
the space $E_v(A^*(S_R))$ contains the element $(1,1,1,...)$.

Now we claim that if $L$ is a mean then  $L(x)=m(x)$ for all $x\in X$.
Indeed, by Lemma \ref{lm.invmeanempty}, we have that  $(\sigma -\hat{T}_v) 
E_v(A^*(S_R))\subset c_0.$ It follows that $L(\hat{T}_v(x))=L(x)$ for all $x\in 
E_v(A^*(S_R))$. Since $(1,1,1,...)\in E_v(A^*(S_R))$ then 
$L|_{E_v(A^*(S_R))}\neq 0.$ By uniqueness $L(x)=m(x)$ on $E_v(A^*(S_R))$, so 
by the continuity of $L$ and $m$ we get our claim. In particular,   
$E_v(A^*(S_R))\cap ker(m)\subset \bigcap_L ker(L)$ where the intersection is 
taken over all means $L.$

The space $X$ admits the decomposition $X=\C \cdot (1,1,1...) \oplus 
(ker(m)\cap X)$,  
and there is a  fixed point of the Beltrami operator $\mu$ such that 
$E_v^{-1}(X)=\C \mu \oplus 
E_v^{-1}(ker(m))$. By the Banach limit theorem (see Theorem 
4.1 in \cite{Krengel}) and the claim above, for every $h\in E_v^{-1}(ker(m))$ we get 
$$
\lim_{k\rightarrow \infty} \frac{1}{k}\sum_{j=0}^{k-1} h(A_j(\gamma_v))=0.
$$
But we have 
$$ 
\frac{1}{k}\sum_{j=0}^{k-1} 
h(A_j(\gamma_v))=h(\frac{1}{k}\sum_{j=0}^{k-1} 
A_j(\gamma_v)), 
$$ 
and thus, the sequence $f_k=\frac{1}{k}\sum_{j=0}^{k-1} A_j(\gamma_v)$ 
is a weakly convergent sequence of integrable holomorphic functions. Let $\phi_0$ be a weak 
limit of $f_k$, then $\phi_0\in A(S_R)$ and $\{f_k\}$ is a bounded sequence which, in particular, converges to $\phi_0$ in $*$-weak topology, hence $\phi_0$ is a pointwise limit of $\{f_k\}$.
Since $\mu$ is invariant, and $\int \mu \gamma_v \neq 0$ then $\int 
\mu \phi_0\neq 0.$ Now recall that the sequence  $\{A_n(\gamma_v)\}$ is either 
degenerating or converges by norm. If $\{A_n(\gamma_v)\}$ is degenerating then 
$f_k$ is also degenerating. This implies that $\phi_0=0$, which contradicts the 
existence of $\mu.$ Thus $\{A_n(\gamma_v)\}$ converges in norm. Then $E_v(h)$ 
is a convergent sequence for every $h\in A^*(S_R).$ \end{proof}

We have the following

\begin{theorem}\label{th.Mtfinite} 

The map $R$ is satisfies Sullivan's conjecture if and only if $M(\hat{T}_v)$ 
is finite for every critical value $v$.
\end{theorem}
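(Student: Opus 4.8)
The plan is to read off, via Theorem~\ref{th.finiteBanach}, that the hypothesis ``$M(\hat{T}_v)$ finite for every critical value $v$'' is nothing but a statement about convergence of the Ces\`aro averages $A_n(\gamma_v)$, and then to match that convergence with Sullivan's conjecture using the ergodic facts of Section~\ref{section4} and the description of invariant functionals on $A(S_R)$ from Section~\ref{sec.Thurs}. First I would make the reduction precise. By Theorem~\ref{th.finiteBanach}, all the sets $M(\hat{T}_v)$ are finite exactly when, for every critical value $v$ and every $\psi\in A^*(S_R)$, the sequence $E_v(\psi)=\bigl(\int_\C\lambda^{-2}\bar\psi\, A_n(\gamma_v)\,|dz|^2\bigr)_{n\ge0}$ converges; under the Bers identification $A^*(S_R)\cong B(S_R)$ the $n$-th term of $E_v(\psi)$ is just the value of the functional $\psi$ on $A_n(\gamma_v)$. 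Since $A^*(S_R)$ is the full dual of $A(S_R)$, this means each bounded sequence $\{A_n(\gamma_v)\}$ is weakly Cauchy in $A(S_R)$; as $A(S_R)$ is a closed subspace of $L_1(S_R)$ (an $L_1$-limit of integrable holomorphic functions is again one, by the mean value property), it is weakly sequentially complete, so $A_n(\gamma_v)$ converges weakly, and then by the Mean Ergodicity lemma its limit $f_v$ is a fixed point of $R^*$ and $A_n(\gamma_v)\to f_v$ in norm. Conversely norm convergence obviously gives $E_v(A^*(S_R))\subset c$. Hence the hypothesis of the theorem is equivalent to: for every critical value $v$, $A_n(\gamma_v)$ converges in the norm of $A(S_R)$ to a fixed point $f_v$ of the Ruelle operator.

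For the ``if'' direction I would argue by contradiction: assume the condition just obtained and suppose $J_R$ supports a non-zero invariant Beltrami differential $\mu$. By Lemma~\ref{lemma.fixed.Beltrami}(2) there is a critical value $v_0$ with $\int_\C\mu\,\gamma_{v_0}\,|dz|^2\neq0$; since $l_\mu(\phi)=\int_{J_R}\phi\,\mu$ is a continuous $R^*$-invariant functional on $A(S_R)$, one gets $l_\mu(f_{v_0})=\lim_n l_\mu(A_n(\gamma_{v_0}))=l_\mu(\gamma_{v_0})\neq0$, so $f_{v_0}\neq0$. Thus $f_{v_0}$ is a non-zero fixed point of the Ruelle operator in $A(S_R)$, and I would invoke the same chain of implications used in the proof of Theorem~\ref{th.coseparable}: by Proposition~\ref{prop.invariantmeasure} the modulus $|f_{v_0}|$ defines a finite absolutely continuous invariant measure, supported in $SC(R)$ by Proposition~\ref{prop.Krengel}, while $f_{v_0}$, being holomorphic and not identically zero on a component of $S_R$, is supported on a set of positive measure disjoint from $P_R$. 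Using $m(P_R\cap SC(R))=0$ and the absence of rotational domains (the standing assumptions of this section), Lemma~\ref{lemma.dichotomy} together with the description of $SC(R)\cap F_R$ following Proposition~\ref{prop.Krengel} forces $SC(R)=\bar{\C}$, so $F_R=\emptyset$, $C(R)=\bar{\C}\neq P_R$, and the presence of $\mu$ then makes $R$ a flexible Latt\'es map. Therefore $R$ satisfies Sullivan's conjecture.

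For the ``only if'' direction I would split on the two cases of Sullivan's conjecture. If $R$ is a flexible Latt\'es map it is postcritically finite, $A(S_R)$ is finite dimensional, the contraction $R^*$ is mean ergodic there, so every $A_n(\gamma_v)$ converges in norm and every $M(\hat{T}_v)$ is finite by the reduction above. If instead $J_R$ carries no non-zero invariant Beltrami differential, then --- combining this with the standing assumption that no non-trivial quasiconformal deformation is supported on $F_R$, i.e.\ that every non-zero $R^*$-invariant functional on $A(S_R)$ is induced by an invariant Beltrami differential on $J_R$ --- one has $Fix(T)=\{0\}$ in $A^*(S_R)$, hence $\overline{(Id-R^*)A(S_R)}={}^{\perp}Fix(T)=A(S_R)$. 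By the criterion of Section~\ref{section4} that $A_n(S)(x)\to0$ precisely when $x\in\overline{(Id-S)(X)}$, every $A_n(\gamma_v)$ converges to $0$ in norm, so $E_v(A^*(S_R))\subset c_0$ and $M(\hat{T}_v)=\emptyset$ by Lemma~\ref{lm.invmeanempty}. In both cases all the $M(\hat{T}_v)$ are finite.

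The step I expect to be most delicate is the passage, inside the reduction, from the purely weak (Banach-limit) information supplied by Theorem~\ref{th.finiteBanach} to genuine norm convergence of $A_n(\gamma_v)$: this is where weak sequential completeness of $A(S_R)$ together with the Mean Ergodicity lemma are essential, and where one must not confuse the weak topology of $A(S_R)$ with the $*$-weak topology coming from the predual $A_*(S_R)$, under which $A_n(\gamma_v)$ always converges but possibly only to the zero fixed point. By contrast, the derivation in the ``if'' direction of a flexible Latt\'es map out of a non-zero Ruelle fixed point is essentially a reprise of arguments already carried out for Theorems~\ref{th.coseparable} and~\ref{th.meanergosep}, and should require no new ideas.
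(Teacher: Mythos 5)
Your proof is correct and arrives at the theorem through the same underlying ingredients, but it takes a noticeably different route from the paper's own argument, which is a three-line deduction: the paper notes that finiteness of every $M(\hat{T}_v)$ (via Theorem \ref{th.finiteBanach}) makes $\bigcap_v \ker(m_v\circ E_v)$ a finite-codimensional subspace of $A^*(S_R)$ contained in $HK(R)$, so $HK(R)$ is coseparable and Theorem \ref{th.coseparable} gives Sullivan's conjecture; the converse is read off from Corollary \ref{cor.coseparable}, Lemma \ref{lm.invmeanempty} and Theorem \ref{th.finiteBanach}. You instead bypass the Hamilton--Krushkal machinery entirely and inline the content of Theorem \ref{th.coseparable}: from Theorem \ref{th.finiteBanach} you extract that each $\{A_n(\gamma_v)\}$ is weakly Cauchy, upgrade this to weak convergence using weak sequential completeness of $A(S_R)$ as a closed subspace of $L_1$, and then to norm convergence to a Ruelle fixed point via the Mean Ergodicity lemma; the remaining chain (Lemma \ref{lemma.fixed.Beltrami}, Proposition \ref{prop.invariantmeasure}, Lemma \ref{lemma.dichotomy}) coincides with what the paper does inside Theorem \ref{th.coseparable}, and your direct two-case treatment of the converse reproduces the first paragraph of that proof. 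What your route buys is a cleaner passage from weak to norm convergence — the weak sequential completeness argument replaces the paper's countable-dense-set/diagonal extraction and does not need the observation that sequences in $D$ are either degenerating or norm-precompact. The only point worth flagging is the implicit hypothesis $P_R\neq\bar{\C}$, needed so that the non-zero holomorphic fixed point $f_{v_0}$ has positive-measure support off $P_R$; the paper carries the same implicit assumption in this section, so this is not a gap relative to the source.
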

\begin{proof}
If $M(\hat{T}_v)$ is finite then, by Theorem \ref{th.finiteBanach}, it consists 
of at most one element $m_v$. The space $E_v(A^*(S_R))$ 
consists only of convergent sequences for every critical value $v$. By 
definition $HK(R)$ contains  all elements $h$ in $A^*(S_R)$ such that $E_v(h)$ 
is a sequence converging to $0$ for every critical value $v.$ In other words,
$\cap_v ker(m_v(E_v(A^*(S_R))))\subset HK(R)$. But the space $\cap_v 
ker(m_v(E_v(A^*(S_R))))$  has finite codimension in $A^*(S_R)$.

This implies that $HK(R)$ is coseparable in $A^*(S_R)$ and by Theorem 
\ref{th.coseparable}, the map $R$ satisfies Sullivan's conjecture. 

By Corollary \ref{cor.coseparable}, the space $HK(R)$ coincides with $A^*(S_R)$ 
and by Lemma \ref{lm.invmeanempty} and Theorem \ref{th.finiteBanach}, the 
converse follows. 
\end{proof}

 Now we prove the following theorem.

\begin{theorem}\label{th.Evcompact} Let $R$ be a rational map without 
rotational domains. Then for any given critical value $v$ of $R$, the following 
statements are equivalent. 

\begin{enumerate}
\item The space $M(\hat{T}_v)$ is finite.
\item The restriction $E_v:B_0(S_R) \to \ell_\infty$ is weakly compact.
\item The restriction $E_v: HK(R)\rightarrow \ell_\infty$ is compact.
\item The operator $E_v:B(S_R)\rightarrow \ell_\infty$ is compact.
 
\end{enumerate}
\end{theorem}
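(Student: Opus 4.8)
The plan is to establish a cycle of implications $(1)\Rightarrow(4)\Rightarrow(3)\Rightarrow(2)\Rightarrow(1)$, with the understanding that the genuinely new content is the equivalence of (1) with any of the compactness statements; the implications among (2), (3), (4) are largely soft functional analysis once the structure of $B(S_R)$ and $B_0(S_R)$ recalled in Section \ref{sec.Thurs} is exploited. First I would record the two facts that drive everything: $B(S_R)$ is a Grothendieck space with the Dunford--Pettis property and $B_0(S_R)$ embeds as a closed subspace of $c_0$ (hence has property (V) of Pe\l{}czy\'{n}sky); moreover $B(S_R)\simeq B_0(S_R)^{**}$ and $A^*(S_R)$ is identified with $B(S_R)$ via Bers. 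The target space $\ell_\infty$ is also a Grothendieck space. On a Grothendieck space every operator into a space not containing a copy of $c_0$ is weakly compact, and on a space with property (V) every operator is either weakly compact or an isomorphism on a copy of $c_0$ (Lemma 3.3.A/Theorem 3.3.B in \cite{HarmandWWerner}, as already used in Section \ref{sec.Thurs}); the Dunford--Pettis property upgrades "weakly compact'' to "compact'' when the domain has the DP property and carries weakly-convergent-implies-norm-convergent behaviour on bounded holomorphic families, exactly as in the normal-families argument given after the second bullet list in Section \ref{sec.Thurs}.

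For $(1)\Rightarrow(4)$: by Theorem \ref{th.finiteBanach}, finiteness of $M(\hat T_v)$ means $E_v(A^*(S_R))$ consists only of convergent sequences, and by the argument in its proof the sequence $\{A_n(\gamma_v)\}$ converges in norm in $A(S_R)$. I would then show $E_v:B(S_R)\to\ell_\infty$ factors (up to a compact perturbation accounting for the Ces\`aro-index shift, using the estimate $|T(h)(A_n(\gamma_v))-h(A_{n+1}(\gamma_v))|\le 4\|h\|\|\gamma_v\|/n$ from Lemma \ref{lm.invmeanempty}) through evaluation against a norm-convergent sequence in $A(S_R)$; such an operator is compact because a norm-convergent sequence of functionals is a compact subset of the dual, so its "transpose'' into $\ell_\infty$ is compact. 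For $(4)\Rightarrow(3)$ and $(4)\Rightarrow(2)$: restriction of a compact operator to a closed subspace is compact, and compact implies weakly compact. For $(2)\Rightarrow(1)$: if $E_v:B_0(S_R)\to\ell_\infty$ is weakly compact, then since $B_0(S_R)\subset c_0$ has property (V) the alternative "isomorphism on a copy of $c_0$'' is excluded; weak compactness of $E_v$ on $B_0(S_R)$ gives, by Gantmacher and the Grothendieck property of $\ell_\infty$, that $E_v^{**}=E_v$ on $B(S_R)=B_0(S_R)^{**}$ is also weakly compact, hence $E_v(A^*(S_R))$ is a relatively weakly compact subset of $\ell_\infty$; a relatively weakly compact set in $\ell_\infty$ that is the image of a linear subspace closed under the shift-compatible map $\hat T_v$ (with $(\sigma-\hat T_v)E_v(A^*(S_R))\subset c_0$) cannot contain a copy of $c_0$ and, being a subspace, must lie inside the space $c$ of convergent sequences — otherwise one produces, as in Lemma \ref{lm.invmeanempty}, the constant sequence $(1,1,1,\dots)$ together with a non-trivial $\hat T_v$-invariant direction and thence infinitely many means, contradicting relative weak compactness (the unit ball of an infinite-dimensional copy of $c_0$ sitting in $E_v(A^*(S_R))$ is not relatively weakly compact). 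Then Theorem \ref{th.finiteBanach} returns $(1)$. Finally $(3)\Rightarrow(1)$ (or I may route $(3)$ into the cycle directly): if $E_v|_{HK(R)}$ is compact, combine with the observation that $HK(R)$ has finite codimension in $A^*(S_R)$ precisely when $E_v(A^*(S_R))\subset c$ for all $v$, which is again the content of Theorems \ref{th.finiteBanach} and \ref{th.Mtfinite}; the no-rotational-domains hypothesis is what guarantees, via Lemma \ref{lemma.dichotomy} and Proposition \ref{prop.invariantmeasure}, that a norm limit of $A_n(\gamma_v)$ forces $R$ into the Latt\'es case rather than hiding fixed points on a rotation-domain grand orbit, so that the dichotomy "degenerating or norm-convergent'' for $\{A_n(\gamma_v)\}$ is clean.

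The hard part will be $(2)\Rightarrow(1)$: turning weak compactness of $E_v$ on the small space $B_0(S_R)$ into the rigid statement that $E_v(A^*(S_R))$ has no sequence escaping $c$. The subtlety is that $E_v$ need not be continuous on all of $A^*(S_R)$ a priori (it is defined by an unbounded-looking pairing), so I must be careful to work with the genuinely bounded restriction, pass to $B(S_R)=B_0(S_R)^{**}$ by bitransposition, and use the Grothendieck property of $\ell_\infty$ together with the shift-quasi-invariance $(\sigma-\hat T_v)E_v(A^*(S_R))\subset c_0$ to propagate relative weak compactness from the image of $B_0$ to the image of the whole bidual. I expect this bitransposition-plus-Grothendieck step, and the exclusion of a copy of $c_0$ inside a relatively weakly compact subspace, to be where all the real work sits; everything else is bookkeeping with the identifications already set up in Section \ref{sec.Thurs}.
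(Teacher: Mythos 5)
Your overall architecture matches the paper's: everything hinges on Theorem \ref{th.finiteBanach} (finiteness of $M(\hat{T}_v)$ iff $E_v(A^*(S_R))$ consists of convergent sequences), on the identification $B(S_R)\simeq B_0(S_R)^{**}$, and on the dichotomy ``degenerating or norm precompact'' for $\{A_n(\gamma_v)\}$. Your direct route $(1)\Rightarrow(4)$ via norm convergence of $A_n(\gamma_v)$ is sound (the paper instead gets $(1)\Rightarrow(2)$ by noting that a bounded operator from the Grothendieck space $B(S_R)$ into the separable space $c$ is weakly compact, and then runs $(2)\Rightarrow(4)$), and $(4)\Rightarrow(3)\Rightarrow(2)$ is indeed soft.

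The genuine gap is in $(2)\Rightarrow(1)$, exactly where you locate ``the real work''. Your mechanism --- the image of the unit ball is relatively weakly compact, and a relatively weakly compact subspace of $\ell_\infty$ compatible with the shift ``cannot contain a copy of $c_0$ and, being a subspace, must lie inside $c$'' --- does not hold. A subspace of $\ell_\infty$ containing no copy of $c_0$ need not lie in $c$ (isometric copies of $\ell_2$ sit inside $\ell_\infty$ and meet $c$ only at $0$); a single non-convergent sequence in the image does not manufacture the unit ball of a copy of $c_0$ inside the image of the unit ball; and ``infinitely many invariant means'' does not by itself contradict weak compactness. The paper closes this implication differently, and you need both halves of its argument. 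First, the standing hypotheses (no rotational domains, no non-trivial deformations on the Fatou set, $m(P_R\cap SC(R))=0$) feed Theorem \ref{th.ThurstonMeanergodic}: $T$ is mean ergodic on $B_0(S_R)$, so $E_v(B_0(S_R))\subset c$ already. Second, dualize: $E_v^*:\ell_1\to B_0^*(S_R)=A(S_R)$ sends the unit vector basis to $\{A_n(\gamma_v)\}$; Gantmacher turns weak compactness of $E_v$ on $B_0(S_R)$ into weak compactness of $E_v^*$, so $\{A_n(\gamma_v)\}$ is relatively weakly compact in $A(S_R)$, hence uniformly integrable, hence (normal families plus convergence in measure, as in Section \ref{sec.Thurs}) norm precompact; the Mean Ergodicity lemma then upgrades this to norm convergence, so $E_v^*$ and therefore $E_v^{**}$ are compact, and $E_v(B(S_R))=E_v^{**}(B_0(S_R)^{**})\subset\overline{E_v(B_0(S_R))}\subset c$ because $c$ is norm closed in $\ell_\infty$. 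Theorem \ref{th.finiteBanach} then returns $(1)$. Without the mean ergodicity of $T$ on $B_0(S_R)$ and the passage through the predual $\ell_1$, your bitransposition-plus-Grothendieck step does not force the image into $c$.
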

\begin{proof}

Clearly, we have that (4) implies (3) and (3) implies (2). 

Let us show that (2) implies (4). Recall that $R$ does not admit non-trivial 
quasiconformal deformations on the Fatou set, that the measure of $P_R\cap SC(R)$ 
is zero, and $F_R$ does not have rotational domains. Hence by Theorem 
\ref{th.ThurstonMeanergodic}, the Thurston operator $T$ is mean-ergodic on 
$B_0(S_R)$, so $E_v(B_0)$ consists of convergent sequences. 
Since $\ell_1$ is isometrically isomorphic to $c^*$ then by duality, 
$E^*_v:\ell_1 \rightarrow A(S_R)=B^*_0(S_R)$ is given by 
$E^*_v(\{a_n\})=\sum_n a_n A_n(\gamma_v)$. If $E_v$ is weakly compact on 
$B_0(S_R)$ then $E^*_v$ is weakly compact on $\ell_1.$ The image of the 
canonical basis of $\ell_1$ is  $\{A_n(\gamma_v)\}$. By the mean ergodicity 
lemma, the sequence $\{A_n(\gamma_v)\}$ is precompact in norm. Then $E^*_v$  
is a compact operator. By duality the operator $E^{**}_v:B\rightarrow 
\ell_\infty$ given by $E_v|_{B_0(S_R)}^{**}(l)=E_v(l)=\{l(A_n(\gamma_v)\}$ is 
compact.

Now let us show that (1) implies (2). By Theorem \ref{th.finiteBanach}, 
$E_v(B(S_R))$ consists of convergent sequences. In other words,  $E_v$ defines 
a continuous operator from a Grothendieck space into a separable space. Since 
the unit ball in $\ell_1$ is sequentially precompact in the $*$-weak topology, 
by definition of a Grothendieck space, we have that $E_v$ is a weakly compact 
operator on $B(S_R).$ Hence the restriction of $E_v$ on $B_0(S_R)$ is also 
weakly compact. 

Finally, let us show that (2) implies (1). By Theorem \ref{th.finiteBanach} is 
enough to show that $E_v(B(S_R))$ consists of convergent sequences. But $E_v$ 
is 
a compact operator on $B(S_R)$ and $E_v(B(S_R))=E^{**}_v(B_0(S_R)\subset 
E_v(B_0(S_R))$. By mean ergodicity of $T$ on $B_0(S_R)$, the set 
$E_v(B_0(S_R))$ consists only of convergent sequences. 
\end{proof}

 The following corollary is one of the main results of this paper. 

\begin{corollary}\label{cor.Evcompact}
A rational map $R$ satisfies Sullivan's conjecture if and only if either the 
operator $Id-T$ is compact or the operator $E_v$ is compact for every critical 
value $v.$ 
\end{corollary}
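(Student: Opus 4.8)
The plan is to combine Theorem \ref{th.Mtfinite}, which characterizes Sullivan's conjecture in terms of the finiteness of the sets $M(\hat{T}_v)$, with Theorem \ref{th.Evcompact}, which translates that finiteness into compactness statements for the operators $E_v$, and then to bring in the Thurston operator $Q = Id-T$ via the structural results of Section 6. The forward direction is the short one: if $R$ satisfies Sullivan's conjecture, then by Theorem \ref{th.Mtfinite} the set $M(\hat{T}_v)$ is finite for every critical value $v$, and by the equivalence (1)$\Leftrightarrow$(4) in Theorem \ref{th.Evcompact} each $E_v$ is compact. (If $R$ is a flexible Latt\`es map one may alternatively note directly that $B(S_R)$ is finite dimensional, so both $Id-T$ and every $E_v$ are automatically compact.) So the assertion ``Sullivan $\Rightarrow$ $E_v$ compact for all $v$'' is immediate, and in particular the disjunction in the corollary holds.

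For the reverse direction there are two cases to dispatch. If $E_v$ is compact for every critical value $v$, then by Theorem \ref{th.Evcompact} (using (4)$\Rightarrow$(1)) each $M(\hat{T}_v)$ is finite, and Theorem \ref{th.Mtfinite} gives Sullivan's conjecture. If instead $Id-T$ is compact, I would argue that this already forces $R$ to be postcritically finite — indeed, the corollary following Theorem \ref{th.Qcompact} states exactly that, once we observe that $Q$ being compact falls under the hypothesis of Theorem \ref{th.Qcompact} (a compact operator is in particular ``either an isomorphism onto its image or compact''), so $T$ is uniformly ergodic and, by the argument in that corollary, $A(S_R)$ is finite dimensional, i.e. $P_R$ is finite. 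A postcritically finite rational map trivially satisfies Sullivan's conjecture: by Lemma \ref{lemma.dichotomy}, either $SC(R)\cap J_R\subset P_R$ has measure zero, so $J_R$ carries no invariant Beltrami differential (as any such differential would be supported on $SC(R)$ by Propositions \ref{prop.Krengel} and \ref{prop.invariantmeasure}), or $SC(R)=\bar\C$, in which case the third clause of Lemma \ref{lemma.dichotomy} identifies the flexible Latt\`es case.

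Thus the structure is: Sullivan's conjecture $\Leftrightarrow$ (all $M(\hat{T}_v)$ finite) $\Leftrightarrow$ (all $E_v$ compact), and separately ($Id-T$ compact) $\Rightarrow$ ($R$ postcritically finite) $\Rightarrow$ Sullivan's conjecture, so the disjunction ``$Id-T$ compact \emph{or} all $E_v$ compact'' is equivalent to Sullivan's conjecture. The main obstacle I anticipate is purely bookkeeping rather than mathematical: making sure the hypotheses of Theorems \ref{th.ThurstonMeanergodic}, \ref{th.Evcompact} and the corollaries to Theorem \ref{th.Qcompact} are all in force — namely the standing assumptions of Section 7 that there is no non-trivial quasiconformal deformation supported on the Fatou set, that $P_R\cap SC(R)$ has measure zero, and that there are no rotational domains — and noting that these assumptions are harmless for the study of Sullivan's conjecture, exactly as discussed at the start of Section 7. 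One should also remark that the two alternatives are not exclusive: in the Latt\`es case both operators are compact, and this is consistent with everything above.
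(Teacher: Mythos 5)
Your forward direction and your treatment of the case where every $E_v$ is compact are correct and coincide with the paper's route (Theorem \ref{th.Mtfinite} combined with Theorem \ref{th.Evcompact}). The genuine gap is in the case where $Id-T$ is compact. You reduce it to ``$R$ postcritically finite $\Rightarrow$ Sullivan's conjecture'' and justify the latter by asserting that any invariant Beltrami differential on $J_R$ would be supported on $SC(R)$ ``by Propositions \ref{prop.Krengel} and \ref{prop.invariantmeasure}''. Those propositions say no such thing: Proposition \ref{prop.Krengel} locates the supports of invariant \emph{absolutely continuous measures}, and Proposition \ref{prop.invariantmeasure} concerns \emph{fixed points of the Ruelle operator}. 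An invariant Beltrami differential is a fixed point of the Beltrami operator in $L_\infty$, and whether it is paired with a fixed point of $R^*$ in $L_1$ is precisely the issue the whole paper revolves around; it is not automatic. If invariant Beltrami differentials were always supported on $SC(R)$, Sullivan's conjecture would follow for every map with $m(P_R)=0$ directly from Lemma \ref{lemma.dichotomy}, and moreover the paper itself exhibits invariant Beltrami differentials supported on grand orbits of attracting domains, which lie in the dissipative set. (That postcritically finite maps carry no invariant line fields is a true theorem of McMullen, but it is not derivable from the two propositions you cite.)

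The paper closes this case differently, and you should too: by Theorem \ref{th.Qcompact}, compactness of $Id-T$ makes $T$ uniformly ergodic. If $J_R$ supports a non-zero invariant Beltrami differential, then by Lemma \ref{lemma.fixed.Beltrami} it induces a non-zero fixed functional for $T$, so $1\in\sigma(T)=\sigma(R^*)$; Theorem \ref{th.meanergosep} (whose remaining hypotheses are among the standing assumptions you already listed) then yields that $R$ is a flexible Latt\'es map. Equivalently: uniform ergodicity plus the Separation principle produce a genuine fixed point of $R^*$ in $A(S_R)$, after which Proposition \ref{prop.invariantmeasure} and Lemma \ref{lemma.dichotomy} do apply. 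With this replacement your argument is complete.
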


\begin{proof}
First we proof the implication $\Rightarrow$. If  $R$ is either a Latt\`es map or 
the Julia set does not support a non-zero invariant Beltrami differential. If $R$ is Latt\`es 
the operator $Id-T$ is compact. The rest follows from Theorem \ref{th.Mtfinite} and 
Theorem \ref{th.Evcompact}. 

Next we proof the implication $\Leftarrow$. If $Id-T$ is compact then, by 
Theorem \ref{th.Qcompact}, the operator $T$ is uniformly ergodic on 
$B(S_R)$. If $R$ admits an invariant non-zero Beltrami differential 
supported on the Julia set then $1$ belongs to the spectrum of $T$. 
Then Theorem \ref{th.meanergosep} implies that the map 
$R$ is Latt\`es.
If $E_v$ is compact for every critical value $v$, then $R$ satisfies Sullivan's 
conjecture by Theorem \ref{th.Evcompact} and Theorem 
\ref{th.Mtfinite}. \end{proof}

Note that if $Id-T$ is compact then the operators $E_v$ are compact but 
the converse is no true. A map $R$ where $E_v\circ (Id-T)$ is not compact, 
for some critical value $v$, would serve as a counterexample to Sullivan's 
conjecture. However, we have the following observation which is one of the main 
motivations of the present work.

\begin{proposition}\label{prop.op.compact}
The operator $E_v \circ (Id-T):B(S_R) \to \ell_\infty$ is compact for every 
critical value $v$. 
\end{proposition}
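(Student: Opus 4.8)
The plan is to compute $E_v\circ(Id-T)$ coordinate by coordinate and to show directly that it carries the closed unit ball of $B(S_R)$ into a norm-compact subset of $\ell_\infty$; compactness of the operator then follows at once.

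First I would use the duality between $T$ and $R^*$. Under the Bers identification $B(S_R)\cong A^*(S_R)$, write $l_\psi\in A^*(S_R)$ for the functional $l_\psi(\phi)=\int_\C\lambda^{-2}(z)\bar\psi(z)\phi(z)\,|dz|^2$ attached to $\psi\in B(S_R)$, so that $\|l_\psi\|$ is comparable to $\|\psi\|_{B(S_R)}$. Since $T$ is the dual of $R^*:A(S_R)\to A(S_R)$, we have $l_{(Id-T)\psi}(\phi)=l_\psi((Id-R^*)\phi)$ for every $\phi\in A(S_R)$. Now $\gamma_v$ is a rational function holomorphic on $S_R$ with a triple zero at $\infty$, so $\gamma_v\in A(S_R)$, every Cesàro average $A_n(\gamma_v)$ (taken with respect to $R^*$) lies in $A(S_R)$, and the $n$-th coordinate of $E_v((Id-T)\psi)$ is exactly $l_\psi\big((Id-R^*)A_n(\gamma_v)\big)$.

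The second and key step is the elementary telescoping identity for Cesàro averages of an arbitrary operator $S$, namely $A_n(S)x-S A_n(S)x=\frac1n(x-S^nx)$ for $n\ge1$. Applying it with $S=R^*$ and $x=\gamma_v$ gives $(Id-R^*)A_n(\gamma_v)=\frac1n\big(\gamma_v-(R^*)^n\gamma_v\big)$, and since $R^*$ is a contraction on $A(S_R)$ we obtain, for $n\ge1$,
$$\big|E_v((Id-T)\psi)_n\big|=\frac1n\big|l_\psi(\gamma_v)-l_\psi((R^*)^n\gamma_v)\big|\le\frac{2\|\gamma_v\|\,\|l_\psi\|}{n}\le\frac{C}{n}\,\|\psi\|_{B(S_R)},$$
with $C$ independent of $\psi$ and $n$; the initial coordinate (if the indexing starts at $n=0$) is bounded crudely by $2\|\gamma_v\|\,\|l_\psi\|$ using $A_1(\gamma_v)=\gamma_v$. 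Hence $E_v\circ(Id-T)$ maps the closed unit ball of $B(S_R)$ into the set $\mathcal K=\{x\in\ell_\infty:\ |x_0|\le C,\ |x_n|\le C/n\ \text{for }n\ge1\}$.

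Finally I would invoke the standard fact that such a ``shrinking box'' is norm-compact in $\ell_\infty$: $\mathcal K$ is closed, it lies in $c_0$ because $C/n\to0$, and it is totally bounded — truncating a point of $\mathcal K$ beyond a large coordinate $N$ changes it by less than $\varepsilon$ in the supremum norm, and the remaining finitely many coordinates range over a bounded subset of $\C^{N+1}$, which admits a finite $\varepsilon$-net. Thus the image of the unit ball is relatively norm-compact and $E_v\circ(Id-T)$ is a compact operator for every critical value $v$. I do not anticipate a genuine obstacle here; the only points requiring a little care are the bookkeeping of the Bers duality identification (to obtain $l_{(Id-T)\psi}(\phi)=l_\psi((Id-R^*)\phi)$), the treatment of the initial coordinate, and the citation of the compactness of $\mathcal K$.
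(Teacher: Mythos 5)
Your proof is correct, and its engine is the same as the paper's: the telescoping identity $(Id-R^*)A_n(\gamma_v)=\frac1n\bigl(\gamma_v-(R^*)^n\gamma_v\bigr)$ together with the fact that $R^*$ is an $L_1$-contraction, yielding $\bigl|E_v((Id-T)\psi)_n\bigr|\le \tfrac{2\|\gamma_v\|\,\|\psi\|}{n}$. Where you diverge is in how compactness is extracted from this estimate. The paper argues sequentially: it takes a bounded sequence $\mu_i$ in $B(S_R)$, passes to a $*$-weakly convergent subsequence, and shows $\|E_v(Id-T)(\mu_i)-E_v(Id-T)(\mu_0)\|_\infty\to 0$ by combining the $O(1/n)$ bound (to control the tail uniformly in $i$) with $*$-weak convergence (to control each fixed coordinate) -- a step it states rather tersely. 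You instead observe that the single estimate already places the image of the unit ball inside a fixed ``shrinking box'' $\{x:|x_n|\le C/n\}\subset c_0$, which is norm-compact in $\ell_\infty$. Your route is slightly more economical and gives marginally more: it avoids invoking $*$-weak sequential compactness of bounded sets in $B(S_R)\cong A^*(S_R)$ (which rests on separability of $A(S_R)$), it shows the image of the unit ball lies in one explicit compact subset of $c_0$, and the totally-bounded argument for the box is standard. The bookkeeping points you flag -- the conjugate-(anti)linearity in the Bers pairing and the treatment of the initial coordinate -- are harmless and do not affect the norm estimates, exactly as you say.
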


\begin{proof}
A bounded sequence on $B(S_R)$ contains a subsequence which is $*$-weakly 
convergent. Let $\mu_i$ be a sequence on $B(S_R)$ with $*$-weak limit $\mu_0$. 
Define $\omega_i=E_v\circ (Id-T)(\mu_i)$ and $\omega_0=E_v\circ(Id-T)(\mu_0)$.
We will prove that $\omega_i$ converges to $\omega_0$ in norm. 

From the definition we have 
\begin{align*}
||\omega_i -\omega_0||&=\sup_n \left| \int_{S_R} 
A_n(\gamma_v)\cdot \left( Id-T\right)(\mu_i)-A_n(\gamma_v)\cdot \left(Id-T\right)(\mu_0) \right|
\\
&=\sup_n \left| \int_{S_R}  A_n(\gamma_v)\cdot \left(Id-T\right)(\mu_i-\mu_0)\right|.
\end{align*}
However, as $T$ is dual to the Ruelle operator we get  

\begin{align*}
\left | \int_{S_R}  A_n(\gamma_v)\cdot (Id-T)(\mu_i-\mu_0) \right|&=\left 
|\int_{S_R}(Id-R^*)\left( A_n(\gamma_v)\right)\cdot(\mu_i-\mu_0)\right|\\
&\leq \frac{2\|\gamma_v\|}{n}\| \mu_i-\mu_0\|.
\end{align*}

Since $\|\gamma_v\|\|\mu_i-\mu_0\|$ is bounded and $\mu_i$ converges $*$-weakly 
to $\mu_0$, then  $||\omega_i -\omega_0||$ converges to $0$ as $i\rightarrow 
\infty.$ Hence $E_v \circ (Id-T)$ is compact. \end{proof} 

In general, the compactness of the composition $E_v\circ (Id-T)$ does not imply 
the compactness of any of the factors. But this implication is true,  by 
Theorem \ref{th.Evcompact}, if and only if Sullivan's conjecture holds true. 

For every $v$, the operator $E_v$ has a canonical extension on
$L_\infty(\overline{\C})$ with the same defining formula. When 
$P_R\neq \overline{\C}$ and $SC(R)\cap P_R$ has Lebesgue measure zero,  the 
extension of $E_v$ on $L_\infty(\overline{\C})$ is compact if and only if $E_v$ is 
compact on $B(S_R).$ The extension of the operators  $E_v$ on $L_\infty(\overline{\C})$,
which we also denote by $E_v$, gives a sort of ``marking" for a rational map $R$. 
Furthermore, the operators $E_v$ induce a topology on the rational maps 
as follows.

A sequence of rational maps $R_i$ converges to $R_0$ in 
$v$-sense, where $v$ is a critical value of $R_0$ if and only if  for a given 
$\mu\in L_\infty(\C)$ there exists a sequence of critical values $v_i$ of $R_i$ 
such that $E_{v_i}(\mu)\rightarrow E_v(\mu)$ in $\ell_\infty.$

When $R_i$ converges to $R_0$ on the $v$-sense we will say that $R_0$ is a 
$v$-limit of $R_i.$

\begin{proposition}
If $R_0$ is a $v$-limit of $R_i$ such that $M(\hat{T}_{v_i})$ is finite  for 
all critical values $v_i$ of $R_i,$ then $M(\hat{T}_v)$ is also finite.
\end{proposition}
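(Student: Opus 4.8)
The plan is to reduce the statement to the single soft fact that the subspace $c\subset\ell_\infty$ of convergent sequences is norm-closed, and to transport the characterization of finiteness of invariant means given by Theorem \ref{th.finiteBanach} along the $v$-limit. First I would recall that, by Theorem \ref{th.finiteBanach} applied to $R_0$, it suffices to prove that $E_v(A^*(S_{R_0}))$ consists exclusively of convergent sequences; under the Bers identification $A^*(S_{R_0})\simeq B(S_{R_0})$ this amounts to $E_v(\psi)\in c$ for every $\psi\in B(S_{R_0})$. Fix such a $\psi$ and regard it as an element of $L_\infty(\C)$ by extending it by $0$ on $K_{R_0}$. Applying the definition of $v$-limit to $\mu=\psi$ produces a sequence of critical values $v_i$ of $R_i$ with $E_{v_i}(\psi)\to E_v(\psi)$ in $\ell_\infty$. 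Note that the finiteness hypothesis is imposed on all critical values of each $R_i$, hence in particular on exactly these $v_i$; this is why the (a priori $\psi$-dependent) choice of witnessing sequence causes no difficulty.

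Next, for each fixed $i$ I would use that $M(\hat{T}_{v_i})$ is finite, so by Theorem \ref{th.finiteBanach} applied to $R_i$ the image $E_{v_i}(A^*(S_{R_i}))$ consists of convergent sequences. The operator $E_{v_i}$ on $L_\infty$ annihilates the complement of $B(S_{R_i})$ inside $L_\infty(S_{R_i})$, because that complement is precisely the space of differentials pairing to $0$ against all of $A(S_{R_i})$, while the $n$-th entry of $E_{v_i}(\psi)$ is the pairing of $\psi$ against $A_n(\gamma_{v_i})\in A(S_{R_i})$; moreover, under the standing assumptions of Section 7 and the remark preceding the proposition, the $L_\infty(\bar\C)$-extension of $E_{v_i}$ only ``sees'' $S_{R_i}$, so the part of $\psi$ supported on $P_{R_i}$ contributes nothing. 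Consequently $E_{v_i}(\psi)\in c$. Since $c$ is norm-closed in $\ell_\infty$ and $E_{v_i}(\psi)\to E_v(\psi)$, we conclude $E_v(\psi)\in c$. As $\psi\in B(S_{R_0})$ was arbitrary, $E_v(B(S_{R_0}))\subset c$, and Theorem \ref{th.finiteBanach} for $R_0$ yields that $M(\hat{T}_v)$ is finite. (If one prefers, the middle step can instead be run through the equivalences of Theorem \ref{th.Evcompact} and the cited remark, passing from ``$M(\hat{T}_{v_i})$ finite'' to ``$E_{v_i}$ compact on $B(S_{R_i})$'' to ``its $L_\infty$-extension is compact''.)

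The routine portion — an $\ell_\infty$-norm limit of convergent sequences is convergent — is immediate; the point that requires genuine care, and which I expect to be the main obstacle, is the passage in the second paragraph from ``$E_{v_i}$ well behaved on $B(S_{R_i})$'' to ``$E_{v_i}$ well behaved as an operator defined on all of $L_\infty(\C)$'', since the $v$-convergence is stated at the $L_\infty$ level. This is exactly where the Section~7 standing hypotheses for the $R_i$ must be used (no non-trivial quasiconformal deformation on the Fatou set, hence no rotational domains, and $m(P_{R_i}\cap SC(R_i))=0$), together with the remark identifying compactness of the $L_\infty$-extension of $E_{v_i}$ with compactness of $E_{v_i}$ on $B(S_{R_i})$; concretely one verifies that each entry $\int \lambda_i^{-2}\,\bar\psi\,A_n(\gamma_{v_i})\,|dz|^2$ depends only on the class of $\psi$ in $A^*(S_{R_i})$, so that $E_{v_i}(L_\infty(\C))=E_{v_i}(A^*(S_{R_i}))$ and the convergence-of-sequences property transfers verbatim.
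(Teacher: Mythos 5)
Your proof is correct and follows essentially the same route as the paper's (very terse) argument: reduce via Theorem \ref{th.finiteBanach} to showing that the image of $E_v$ consists of convergent sequences, use the finiteness hypothesis together with Theorem \ref{th.finiteBanach} for each $R_i$ to place the image of $E_{v_i}$ in $c$, and conclude from the fact that $c$ is norm-closed in $\ell_\infty$ and $E_{v_i}(\mu)\to E_v(\mu)$. The only difference is that you make explicit the passage between $E_{v_i}$ on $B(S_{R_i})$ and its $L_\infty(\bar{\C})$-extension (where the $v$-convergence is actually stated), a point the paper's proof silently absorbs via the remark preceding the proposition.
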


\begin{proof}
Let $c$ be the space of convergent sequences. By Theorem \ref{th.finiteBanach}, 
it is enough to show that the image $E_v$ belongs to $c$. 
By the hypothesis, for every critical value $w$ of $R_i$ the image of $E_{w}$ 
belongs to $c$. As the space  $c$ is closed in $\ell_\infty$ 
then the image of $E_v$ is also a subset of $c.$\end{proof}

In other words, roughly speaking, any $v$-limit of rational maps satisfying 
Sullivan's conjecture, satisfies Sullivan's conjecture too. Moreover, in 
general not every accumulation point in the $v$-sense has the same degree as 
the approximating elements. Further details on this topology will be the 
subject of a forthcoming work. 
 
\section{A mixing condition}

In this section we show that the Ruelle operator does not have fixed 
points when $R$ satisfies a kind of mixing condition over  
its strongly conservative set. 

We say that $R$ satisfies the $M$-\textit{condition} if, for an  invariant 
ergodic probability measure $\nu$ which is absolutely continuous with respect 
to the Lebesgue measure supported on the Julia set  $J_R$, $R$ satisfies the 
following two properties: 

\begin{enumerate}
\item  If $A$ and $B$ are $\nu$-measurable subsets of $SC(R)\cap J_R$, 
 
then $\lim \nu(B\cap R^{-n}(A))=\nu(A)\nu(B)$.

\item  There exists a $\nu$-measurable set $A_\nu\subset SC(R)\cap J_R$ with 
$\nu(A_\nu)>0$ such that the sequence of  functions 
$$
B^n(\chi_{SC(R)}(x))=\frac{\overline{(R^n)'(x)}}{(R^n)'(x)}
$$ 
is precompact in the topology of convergence in measure on $A_\nu$.
\end{enumerate}
The reader might recognize in the first property the classical mixing 
condition for invariant probability measures. We will comment on the second 
property at the end of this section.
If there is no invariant absolutely continuous probability measure, then the 
$M$-condition is vacuously satisfied. This is the case when the Lebesgue 
measure of the strongly conservative set is zero. 

\begin{proposition}\label{prop.converg} 
Assume $R$ satisfies the first property of the $M$-condition. 
Let $\nu$ be an invariant ergodic probability measure absolutely continuous 
with respect to Lebesgue. Let $W=supp(\nu)$ be the support of $\nu$. Then for 
every $\phi$ in $L_\infty(W,\nu)$, the sequence $$(|R^*|)^{*n}(\phi)$$ 
converges 
$*$-weakly to a constant. 
\end{proposition}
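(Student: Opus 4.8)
The plan is to unwind the adjoint operator and then reduce the statement to the mixing hypothesis by a routine approximation. First I would identify the action of $(|R^*|)^{*}$ on $L_\infty(W,\nu)$. By Proposition \ref{prop.bounds}(2) the operator dual to $|R^*|$ on $L_1$ is the Koopman (composition) operator $\psi\mapsto\psi\circ R$; and since $\nu$ is invariant, ergodic, absolutely continuous and supported on $J_R$, the last assertion of Proposition \ref{prop.Krengel} gives $W=\mathrm{supp}(\nu)\subset SC(R)\cap J_R$, while invariance of $\nu$ gives that $W$ is forward invariant modulo $\nu$-null sets and that $\|\psi\circ R^n\|_{L_\infty(W,\nu)}=\|\psi\|_{L_\infty(W,\nu)}$. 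Hence, for $\phi\in L_\infty(W,\nu)$ the orbit $(|R^*|)^{*n}(\phi)=\phi\circ R^n$ is a uniformly bounded sequence in $L_\infty(W,\nu)=(L_1(W,\nu))^{*}$, and the assertion to be proved becomes that $\int_W(\phi\circ R^n)\,g\,d\nu\to\bigl(\int_W\phi\,d\nu\bigr)\bigl(\int_W g\,d\nu\bigr)$ for every $g\in L_1(W,\nu)$, that is, that $\phi\circ R^n$ converges $*$-weakly to the constant $c=\int_W\phi\,d\nu$.

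Next I would observe that this is exactly property (1) of the $M$-condition in disguise: it says that $\int_W\chi_B(\chi_A\circ R^n)\,d\nu=\nu(B\cap R^{-n}(A))\to\nu(A)\nu(B)$ for $\nu$-measurable $A,B\subset SC(R)\cap J_R$, in particular for $A,B\subset W$. By bilinearity this extends to $\int_W\psi_1(\psi_2\circ R^n)\,d\nu\to\bigl(\int_W\psi_1\,d\nu\bigr)\bigl(\int_W\psi_2\,d\nu\bigr)$ whenever $\psi_1,\psi_2$ are simple functions on $W$.

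Then I would close the gap with a standard three-$\epsilon$ estimate. Given $\phi\in L_\infty(W,\nu)$, $g\in L_1(W,\nu)$ and $\epsilon>0$, use that simple functions are dense in $L_\infty(W,\nu)$ for the supremum norm and in $L_1(W,\nu)$ for the $L_1$ norm to pick simple $\phi_\epsilon,g_\epsilon$ with $\|\phi-\phi_\epsilon\|_\infty<\epsilon$ and $\|g-g_\epsilon\|_1<\epsilon$. Writing
$$\int_W(\phi\circ R^n)\,g\,d\nu-c\int_W g\,d\nu$$
as a sum of four terms built from $(\phi\circ R^n)(g-g_\epsilon)$, $((\phi-\phi_\epsilon)\circ R^n)g_\epsilon$, the previous step applied to $\phi_\epsilon,g_\epsilon$, and the error between the products of integrals, and using $\|\phi\circ R^n\|_\infty\le\|\phi\|_\infty$ together with the invariance of $\nu$, one bounds $\limsup_n$ of the absolute value by a constant times $\epsilon$ depending only on $\|\phi\|_\infty$ and $\|g\|_1$. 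Since $\epsilon$ is arbitrary the convergence follows, with limiting constant $c=\int_W\phi\,d\nu$.

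I expect the only genuinely delicate point to be the setup in the first paragraph: checking that $(|R^*|)^{*n}$ restricted to $L_\infty(W,\nu)$ is the $n$-th Koopman iterate and is dual to a power-bounded operator on $L_1(W,\nu)$, so that $*$-weak convergence against $L_1(W,\nu)$ is indeed the relevant notion; this uses $W\subset SC(R)\cap J_R$ and the invariance of $\nu$ (Proposition \ref{prop.Krengel}), and the mutual absolute continuity of $\nu$ and Lebesgue measure on $W$. Beyond that bookkeeping, the proposition is the classical fact that strong mixing of $(R,\nu)$ forces the Koopman orbits to converge weakly-$*$ to their mean, which is precisely the approximation argument above.
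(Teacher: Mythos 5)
Your proof is correct and follows essentially the same route as the paper: both identify $(|R^*|)^*$ with the Koopman operator $\psi\mapsto\psi\circ R$ on $L_\infty(W,\nu)$ (equivalently, the dual of the normalized transfer operator $g\mapsto\frac{1}{\rho}|R^*|(g\rho)$ on $L_1(W,\nu)$, $\rho$ the density of $\nu$) and upgrade property (1) of the $M$-condition from characteristic functions to general test functions by a density argument. The only cosmetic difference is that the paper runs the approximation through $L_2(W,\nu)$ and then invokes mean ergodicity together with ergodicity of $\nu$ to identify the weak limit as a constant, whereas your three-$\epsilon$ estimate with simple functions lands directly on the constant $\int_W\phi\,d\nu$; both are sound.
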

\begin{proof}
The proof follows from classical ergodic theory, for convenience to 
the reader we include it here. Let $\phi$ be a non negative function such that 
$d\nu=\phi(z) |dz|^2$. 
Since $\nu$ is an invariant probability measure we have  $|R^*|(\phi)=\phi$.  
Now consider the space $L_1(W,\nu)$ and the operator $S$  on $ L_1(W,\nu)$ 
given by $S(g)=\frac{1}{\phi}|R^{*}|(g\phi)$ 
with dual $S^*(\omega)=\omega(R)$ for $\omega$ in $L_1^*(W,\nu)$.  
Note that $S$ and $S^*$ are contractions in both $L_1(W,\nu)$ and 
$L_\infty(W,\nu)$. By well known ergodic theorems (see for instance Chapter 6 
of Dunford and Schwartz \cite{DunfordSchwartz}),  
both $S$ and $S^*$ are contracting mean-ergodic operators on $L_p(W,\nu)$ for 
all $1 \leq p <\infty$. The first part of the $M$-condition implies that, for 
every $f$ and $g$ in $L_2(W,\nu)$,
we have 
$$
\lim \int S^n(f)\,\overline{g}d\nu=\lim \int f\,\overline{S^{*n}(g)}d\nu=
\lim \int f \,\overline{g(R^n)}d\nu=\int f d\nu \int \overline{g} d\nu.
$$

Since $\nu$ is a probability measure, we get the chain of inclusions 
$$L_\infty(W,\nu)\subset L_2(W,\nu)\subset L_1(W,\nu),$$
and $L_2(W,\nu)$ defines an everywhere dense subspace in $L_1(W,\nu)$. 
Hence the orbits of $S$ and $S^*$ converge weakly 
in $L_1(W,\nu)$ and $*$-weakly in $L_\infty(W,\nu)$, respectively. 
Let $f_0$ be an element in $L_2(W,\nu)$. 
Then the  weak limits of 
$S^n(f_0)$ and $S^{n*}(f_0)$ are fixed points for $S$ and $S^*$, respectively. 
But $\nu$ is ergodic, 
so the spaces of fixed points of $S$ and $S^*$ consists only of constants. 
The conclusion of the proposition follows from the equality 
$S^*(\mu)=\mu(R)=|R^*|^*(\mu)$. 
\end{proof}

\begin{theorem}\label{th.mixing}
Assume that $R$ is not a Latt\`es map, that satisfies the $M$-condition, and 
$P_R\neq \C$. Then there is no non-zero fixed point of the 
Ruelle 
operator in $L_1(J_R)$.   
\end{theorem}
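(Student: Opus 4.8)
The plan is to argue by contradiction. Suppose $f\in L_1(J_R)$ is a non-zero fixed point of the Ruelle operator $R^*$. By Proposition \ref{prop.invariantmeasure} there is an invariant Beltrami differential $\mu$ with $|f|/f=\mu$ a.e.\ on $\operatorname{supp}(f)$ and with $|R^*|\,|f|=|f|$; after normalizing so that $\|f\|_1=1$, the measure $\nu$ with $d\nu=|f|\,|dz|^2$ is a finite absolutely continuous invariant probability measure. By Proposition \ref{prop.Krengel} its support $W$ lies in $SC(R)$, and since $f\in L_1(J_R)$ we also have $W\subset J_R$; thus $W\subset SC(R)\cap J_R$ has positive Lebesgue measure. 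Since $R$ is not a Latt\`es map, Lemma \ref{lemma.dichotomy} places us in the situation addressed by the $M$-condition. Passing to an ergodic component (which remains a finite absolutely continuous invariant measure, and restricting $f$ to a completely invariant subset again yields an $R^*$-fixed point) I may assume $\nu$ is ergodic; in fact, by Lyubich's structure theory as in Lemma \ref{lemma.dichotomy}, in this case $\nu$ is already ergodic. Note also $\deg R\ge 2$, since for a M\"obius map $J_R$ is Lebesgue-null and $L_1(J_R)=0$, so the statement is vacuous.

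The heart of the matter is to deduce that $\mu$ is constant on $W$. Since $|\mu|\equiv 1$ on $W$ we have $\mu\in L_\infty(W,\nu)$, and the first property of the $M$-condition together with Proposition \ref{prop.converg} gives that $\mu\circ R^n=(|R^*|)^{*n}(\mu)$ converges $*$-weakly in $L_\infty(W,\nu)$ to a constant $c$; integrating against $1$ and using invariance of $\nu$ shows $c=\int_W\mu\,d\nu$. On the other hand, invariance of $\mu$ yields the pointwise identity $\mu(x)=\mu(R^n(x))\,B^n(\chi_{SC(R)})(x)$ on $W$, where $B^n(\chi_{SC(R)})(x)=\overline{(R^n)'(x)}/(R^n)'(x)$ for $\nu$-a.e.\ $x$ because the orbit of a $\nu$-generic point stays in $SC(R)$; hence $\mu\circ R^n=\mu\cdot\overline{B^n(\chi_{SC(R)})}$. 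By the second property of the $M$-condition, on the set $A_\nu$ there is a subsequence along which $B^{n_k}(\chi_{SC(R)})$ converges in measure to some $g$ with $|g|=1$, so $\mu\circ R^{n_k}\to\mu\bar g$ in measure on $A_\nu$. A uniformly bounded sequence converging in measure converges in $L_1(A_\nu,\nu)$ against every $L_1$ test function, so this limit must agree with the $*$-weak limit: $\mu\bar g=c$ $\nu$-a.e.\ on $A_\nu$. As $\nu(A_\nu)>0$ and $|\mu\bar g|=1$, we get $|c|=1$. But $\nu$ is a probability measure with $|\mu|\equiv 1$ on $W$, so $\bigl|\int_W\mu\,d\nu\bigr|=1=\int_W|\mu|\,d\nu$ is the equality case of the triangle inequality, which forces $\mu\equiv c$ $\nu$-a.e.\ on $W$.

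To conclude, use that $\mu$ is a fixed point of the Beltrami operator, $\mu(z)=\mu(R(z))\,\overline{R'(z)}/R'(z)$ for a.e.\ $z$. Restrict to $W\cap R^{-1}(W)$, which has full $\nu$-measure by invariance, hence positive Lebesgue measure because $\nu\ll m$. On this set $\mu(z)=\mu(R(z))=c\neq 0$, so $\overline{R'(z)}/R'(z)=1$, i.e.\ $R'(z)\in\mathbb{R}$ on a set of positive Lebesgue measure. Since $\deg R\ge 2$, the derivative $R'$ is a non-constant rational function, so $(R')^{-1}(\mathbb{R})$ is a proper real-analytic subset of $\bar{\C}$ and has Lebesgue measure zero — a contradiction. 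Hence no non-zero fixed point $f$ exists.

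The step I expect to be the main obstacle is the reconciliation in the middle paragraph: one must know that the $*$-weak limit of $\mu\circ R^n$ produced by the mixing hypothesis coincides, on $A_\nu$, with the in-measure limit of the subsequence extracted from part (2) of the $M$-condition, which requires that $\nu$ restricted to $A_\nu$ be comparable to planar Lebesgue measure there so that the two notions of convergence in measure coincide. A secondary technical point, handled by appealing to Lyubich's theory as in Lemma \ref{lemma.dichotomy}, is the reduction to an ergodic absolutely continuous invariant measure, which is needed before Proposition \ref{prop.converg} can be applied.
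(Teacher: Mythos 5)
Your proof is correct and follows essentially the same strategy as the paper's: Proposition \ref{prop.converg} (via the first half of the $M$-condition) forces $\mu\circ R^{n}=(|R^*|)^{*n}(\mu)$ to converge $*$-weakly to a constant, the second half of the $M$-condition supplies a subsequence converging in measure on $A_\nu$ to a modulus-one limit, and the resulting constancy of $\mu$ contradicts the invariance identity because $(R')^{-1}(\mathbb{R})$ is Lebesgue-null. Your one genuine streamlining is identifying the constant as $c=\int_W\mu\,d\nu$ with $|c|=1$ and invoking the equality case of the triangle inequality to make $\mu$ constant on all of $\mathrm{supp}(f)$ at once, which lets you skip the paper's closing Poincar\'e-recurrence step with $R^{k_0}$ on $A_\nu$ (the paper instead tracks $Z^{*n}(\chi_{\mathrm{supp}(f)})$ at a recurrent Lebesgue density point to get constancy only on $A_\nu$).
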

\begin{proof} Without loss of generality we can assume that $P_R$ is bounded. 
Now, assume that there exists a non-zero fixed point $f$ of the 
Ruelle operator in $L_1(J_R)$. 
Then by Proposition \ref{prop.invariantmeasure} there exists a fixed point the Beltrami operator $\mu$ with $\mu(z)=\frac{\overline{f(z)}}{|f(z)|}$ almost everywhere on $supp(f)$ and $|f|=\mu f$ is the density of a finite invariant measure $\nu$. After normalization we can assume that $\nu$ is a probability measure.  By Proposition \ref{prop.Krengel}, we have that $supp(f)\subset SC(R)$. Since $\mu(z)$ is not $0$ almost everywhere and $R$ is not a Latt\`es map then by the part (3) of  Lemma \ref{lemma.dichotomy} the support of 
$f$ is a bounded measurable subset of the postcritical set $P_R$.

For $S$ and $S^*$ as in the proof of Proposition \ref{prop.converg}, 
 consider the operator given by $$
Z(g)=\frac{1}{|f|}R^*(g|f|), 
$$ 
which defines an endomorphism of $L_1(supp(f),\nu)$. 
In this situation we have that  
$$
Z^*(\alpha)(z)=B(\alpha)(z)=\alpha(R)(z)\frac{\overline{R'}(z)}{R'(z)}
$$ 
defines an endomorphism of $L_\infty(supp(f),\nu)$. 
We obtain $Z(g)=\overline{\mu} S(\mu g)$ and $Z^*(\alpha)=\mu S^*(\overline{\mu}\alpha )$. 
By Proposition \ref{prop.converg} the orbits of $S$ and $S^*$ converge weakly  
to constants;  
hence the orbits of the operators $Z$ and $Z^*$ converge weakly  to scalar 
multiples of $\overline{\mu}$ and $\mu$ respectively. 
Let $c_g$ be the constant such that $Z^{*n}(g)$ converges weakly  to $ c_g\mu.$

Let $z_0$ be a density point of  $supp(f)$ and a continuity point of  
$\mu$. Since $supp(f)$ is a subset of the strongly conservative set $SC(R)$ and
almost every point $supp(f)$ is recurrent, we can assume that $z_0$ is 
also recurrent. 
This implies that there exists a sequence $\{n_i\}$ such that
$$
|\mu (R^{n_i}(z_0))-\mu(z_0)| \to 0.
$$ 
But $\mu$ is invariant, so that 
$\displaystyle{\left|\frac{\overline{(R^{n_i})'}}{(R^{n_i})'}(z_0)-1\right|}$ 
converges to $0$. 
Using together that $supp(f)$ is bounded and the $M$-condition holds,
we can assume that the previous sequence converges pointwise 
 almost everywhere in $A_\nu \subset supp(f)$. 
In this case we have 
$\displaystyle{\frac{\overline{(R^{n_i})'}}{R^{n_i}}(z_0)=(Z^*)^{n_i}(\chi_{
supp(f) } )(z_0).}$ 
By the Lebesgue dominated convergence theorem 
$\frac{\overline{(R^{n_i})'}}{R^{n_i}}$ converges to its pointwise almost 
everywhere limit 
 in the $L_1$ norm on $A_\nu$.  As 
norm and weak limits agree whenever they both exist we have  
$c_{\chi_{supp(f)}}=\frac{1}{\mu(z_0)}. $ But $c_{\chi_{supp(f)}}$ does not 
depend on the point $z_0$ nor on the sequence $\{n_i\}$.  
Therefore  $\mu(z)=\frac{1}{c_{\chi_{supp(f)}}}$ for almost every $z$ in 
$A_\nu$. Since $A_\nu\subset SC(R)$ there exists a natural $k_0$ such 
that $\nu(A_\nu \cap R^{k_0}(A_\nu))>0.$ Hence for a density point $y$ of $ 
A_\nu \cap R^{k_0}(A_\nu)$ there exists a density point $x\in A_\nu$ so that 
$y=R^{k_0}(x)$ and by invariance, we have 
$$c_{\chi_{supp(f)}}(x)=\mu(x)=\mu(y)\frac{\overline{(R^{k_0})'(x)}}{(R^{
k_0 })'(x)}=c_{\chi_{supp(f)}}(x)\frac{\overline{(R^{k_0})'(x)}}{(R^{
k_0 })'(x)}.$$ Then, again as in the proof of Proposition \ref{prop.chiA}, we 
have $A_\nu\cap (R^{k_0})^{-1}(A_\nu\cap R^{k_0}(A_\nu))\subset 
((R^{k_0})')^{-1}(\mathbb{R})$. 
But $\nu$ is absolutely continuous with respect to the Lebesgue measure and 
$\nu(A_\nu)>0$. 
This contradiction completes the proof.
\end{proof}

As an immediate corollary we have.

\begin{corollary}
If $R$ satisfies the conditions of Theorem \ref{th.mixing} and there exists a 
non-zero invariant Beltrami differential $\mu$, then $supp(\mu)\cap SC(R)$ has 
Lebesgue measure zero.
\end{corollary}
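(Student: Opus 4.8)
The plan is to argue by contradiction. Suppose $m(\operatorname{supp}(\mu)\cap SC(R))>0$ and put $A=\operatorname{supp}(\mu)\cap SC(R)$. Since $\mu$ is a fixed point of the Beltrami operator, $\mu(z)=0$ exactly when $\mu(R(z))=0$, so $\operatorname{supp}(\mu)$ is completely invariant modulo Lebesgue null sets; combined with the (standard) complete invariance modulo null sets of the strongly conservative set, which comes from Proposition~\ref{prop.Krengel} and the ergodic-theoretic description of the conservative decomposition in \cite{Krengel}, this makes $A$ completely invariant modulo null and of positive measure, and in particular $m(SC(R))>0$.

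Next I would manufacture a non-zero fixed point of the Ruelle operator carried by $A$. By Proposition~\ref{prop.Krengel} there is a non-negative $P\in L_1(\bar{\C})$ with $P>0$ almost everywhere on $SC(R)$ and $|R^*|P=P$, and I set
$$
f=P\cdot\frac{\overline{\mu}}{|\mu|}\cdot\chi_A .
$$
Then $f\in L_1(\bar{\C})$, $f\neq 0$, $\operatorname{supp}(f)=A$ modulo null sets, and $|f|=P\chi_A$. Using $R'(\zeta_i(z))=1/\zeta_i'(z)$ and the invariance relation for $\mu$ one obtains $\mu(\zeta_i(z))=\mu(z)\,\zeta_i'(z)/\overline{\zeta_i'(z)}$, so that for $z\in A$ — where every branch value $\zeta_i(z)$ again lies in $A$ by complete invariance —
$$
R^*(f)(z)=\frac{\overline{\mu(z)}}{|\mu(z)|}\sum_i P(\zeta_i(z))\,|\zeta_i'(z)|^2=\frac{\overline{\mu(z)}}{|\mu(z)|}\,|R^*|(P\chi_A)(z)=\frac{\overline{\mu(z)}}{|\mu(z)|}\,P(z)=f(z),
$$
while for $z\notin A$ every $\zeta_i(z)$ avoids $A$ and both sides vanish; here $|R^*|(P\chi_A)=P\chi_A$ is immediate from $|R^*|P=P$ and the complete invariance of $A$. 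This is essentially the computation behind Proposition~\ref{prop.invariantmeasure} and behind the operators $Z$, $Z^*$ in the proof of Theorem~\ref{th.mixing}.

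To conclude, I would push $f$ onto the Julia set. Both $J_R$ and $F_R$ are completely invariant, so the same branch-wise argument shows that $f\chi_{J_R}$ is again a fixed point of $R^*$, and it lies in $L_1(J_R)$. Because $R$ is not a Latt\'es map, satisfies the $M$-condition, and has $P_R\neq\C$, Theorem~\ref{th.mixing} forces $f\chi_{J_R}=0$, i.e.\ $m(A\cap J_R)=0$. For the Fatou part, the discussion following Proposition~\ref{prop.Krengel} identifies $SC(R)\cap F_R$ with the union of all rotation-domain cycles, which is empty provided $R$ has no rotational domains (the running assumption that $R$ carries no non-trivial quasiconformal deformation supported on the Fatou set), so $m(A\cap F_R)=0$ as well. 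Adding the two parts gives $m(\operatorname{supp}(\mu)\cap SC(R))=0$, a contradiction.

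I expect the algebra producing $f$ to be routine once the invariant density of Proposition~\ref{prop.Krengel} is available; the real content is the last step — that $f\chi_{J_R}$ is still $R^*$-invariant and belongs to $L_1(J_R)$, and that the rotation-domain part of $\operatorname{supp}(\mu)\cap SC(R)$ is negligible. The latter is where the absence of non-trivial Fatou deformations genuinely enters: a radially symmetric invariant Beltrami differential on a Siegel disk or Herman ring would otherwise contradict the statement, so that hypothesis cannot be omitted.
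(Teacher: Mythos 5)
Your argument is correct and is essentially the route the paper intends: the corollary is stated as an immediate consequence of Theorem \ref{th.mixing}, and the actual content is precisely your construction of the Ruelle fixed point $P\,\overline{\mu}/|\mu|$ on $supp(\mu)\cap SC(R)$ from the invariant density of Proposition \ref{prop.Krengel}, combined with the observation (which you rightly flag as indispensable) that the Fatou part of $SC(R)$ is empty under the standing no-rotation-domain hypothesis of this part of the paper. One small simplification worth noting: since $supp(P)\subset SC(R)$ automatically by the last clause of Proposition \ref{prop.Krengel}, you may take $f=P\,\overline{\mu}/|\mu|\,\chi_{supp(\mu)}$, which requires only the complete invariance mod null of $\{\mu\neq 0\}$ (immediate from the invariance equation for $\mu$) and sidesteps the slightly delicate question of whether $SC(R)$ itself is completely invariant mod null rather than merely essentially forward invariant as stated in Proposition \ref{prop.Krengel}.
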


Finally, let us comment on the $M$-condition. According to M. Rees 
(see \cite{ReesErg}), the known examples of rational maps for which the 
strongly 
conservative set has positive Lebesgue measure forms a set of positive Lebesgue measure consisting of ergodic maps $R$ with $SC(R)=\overline{\C}.$ In other words there exists a
unique invariant absolutely continuous probability measure $\rho$ on $\overline{\C}$ 
so that for any pair $A,B$ of measurable subsets there exists  $n_0\in 
\mathbb{N}$ such that 
the Lebesgue measure of $R^{n_0}(A)\cap R^{n_0}(B)$ is positive. In ergodic 
theory this corresponds to the fact that the operator $S$ of Proposition 
\ref{prop.converg} has strongly convergent orbits in $L_1(\C,\rho)$. The first 
part of our definition of the $M$-condition is equivalent to weak convergence of 
orbits in 
$L_1(\C,\rho)$ with respect to $|R^*|$ and this is the classical definition of 
mixing dynamical systems with respect to a non-necessarily invariant measure. 

Now, about the second property of the $M$-condition: the precompactness in measure of a 
family of bounded measurable functions on probability measure spaces is a rather simple consequence of Koml\'os theorem (see for example page 39 of  \cite{AaronsonBook}) which states:

\textit{If $(X,\alpha)$ is a probability measure space and $f_n$ a sequence in $L_1(X,\alpha)$ with $\sup \|f_n\|_1<\infty.$ Then there exists $f_0\in L_1(X,\alpha)$ and a  subsequence $\{g_k := f_{n_k}\}\subset \{ f_n\} $ such that
for any subsequence $g_{k_i}$ of $g_k$ we have
$$\frac{1}{m}\sum_{i=1}^m g_{k_i}\rightarrow f_0,$$ $\alpha$-almost everywhere for $m\rightarrow \infty.$}

We call the sequence $g_k$, a \textit{Koml\'os subsequence} of $f_n.$ Then we have the following proposition.

\begin{proposition}
 Let $(X,\alpha)$ be a probability space and $g_k$ be a Koml\'os subsequence of a sequence of measurable functions $f_n$ with $|f_n|\leq M<\infty$
 almost everywhere. Then there exists a subset $A\subset X$ with $\alpha(A)>0$ such that $g_k$ 
 converges pointwise almost everywhere on $A.$
\end{proposition}
 
\begin{proof}
Let $f_0$ be a measurable function satisfying Koml\'os theorem for $g_k$, then $|f_0|\leq M$ 
almost everywhere. Let $x_0\in X$ be a point such that the functions  $f_0$ and $g_k$ are well-defined at $x_0$ and 
$\frac{1}{N}\sum_{k=1}^N g_k(x_0)\rightarrow f_0(x_0)$ as $N\rightarrow \infty.$

Let us show that the bounded sequence of complex numbers $\{g_k(x_0)\}$ converges to $f_0(x_0).$ 
Since $\{g_k(x_0)\}$ is bounded, it is sufficient to check that the accumulation set  of $g_k(x_0)$ consists 
of a single point $f_0(x_0).$ Assume that there exists a subsequence $g_{k_i}(x_0)$ converging
to $b\neq f_0(x_0)$ then the Ces\`aro averages $$\frac{1}{N}\sum_{i=1}^N g_{k_i}(x_0)\rightarrow b$$ 
as $N\rightarrow \infty$. But this contradicts that $g_k$ is a Koml\'os subsequence. \end{proof}

Since  pointwise convergence almost everywhere implies convergence in measure
in finite measure spaces, we conclude that  the second condition is always fulfilled on $SC(R)$.

Let us note that if $\mu$ is an 
invariant Beltrami differential then by Birkhoff's theorem we have 
$$
\frac{1}{n}\sum_{i=0}^{n-1}\overline{\mu}(R^i(z))=
\overline{\mu}(z)\left[\frac{1}{n}\sum_{
i=0}^{n-1}\left(\frac{\overline{R'^i(z)}}{R'^i(z)}\right)\right] \rightarrow 
\int \overline{\mu}(z) d\nu(z).
$$

Here the convergence is almost everywhere and  in  the $L_1$ norm on $supp(\nu)$, 
where $\nu$ is an invariant absolutely continuous probability measure. So the 
Ces\`aro averages $\frac{1}{n}\sum_{i=0}^{n-1} B^i(\chi_{SC(R)})$ 
converges almost everywhere to a multiple of $\mu$ on $supp(\mu)\cap 
supp(\nu)\subset SC(R).$  If every  Koml\'os subsequence of any subsequence of 
$B^n(\chi_{SC(R)})$ converges to a multiple of $\mu$ then the whole sequence $B^{n}(\chi_{SC(R)})$ converges to $\mu$ pointwise almost everywhere and hence converges in norm in  $L_1(\nu)$ by the Lebesgue dominated theorem. 

Therefore, the second part of the $M$-condition is an analog of weak mixing  of the complex Perron-Frobenius operator acting on $L_1(\nu)$ as the operator $Z$ defined on the proof of Theorem \ref{th.mixing}.

Finally, the next observation is the main motivation of this section. This 
proposition follows from classical ergodic theorems and a well known fact, due 
to Sullivan, which states that a measurable set $A\subset J_R$ has zero 
Lebesgue measure whenever the iterates $R^n$ are injective on $A$ and the 
Lebesgue measure of $R^n(A)\cap R^m(A)$ is $0$ for all distinct  $m,n>0$.

\begin{proposition}
Assume that a rational map $R$ is injective on $P_R\cap J_R$ 
and the Lebesgue measure of $P_R\cap SC(R)$ is $0$. Then the Lebesgue measure of $P_R$ is zero if and only if $R$ is mixing on its conservative part with respect 
to the Lebesgue measure restricted on $P_R$.
\end{proposition}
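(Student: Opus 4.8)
The plan is to treat the two implications separately, with essentially all the work in the "if" direction. The forward implication is trivial: if $m(P_R)=0$, the restriction of Lebesgue measure to $P_R$ is the zero measure, so the mixing condition is satisfied vacuously. For the converse I would argue by contradiction, assuming that $R$ is mixing on the conservative part $\mathcal{C}$ of $(\mathrm{Leb}|_{P_R},R)$ while $m(P_R)>0$, and manufacturing from this an absolutely continuous invariant probability measure whose support is contained in $P_R\cap SC(R)$, contradicting the hypothesis that this set is null. Since forward orbits of points of $P_R$ remain in $P_R$, the set $\mathcal{C}$ coincides modulo null sets with $P_R\cap C(R)$.

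First I would reduce everything to the Julia set: every Fatou component is eventually periodic and rotation domains contain no critical points, so for each critical point $c$ the closure of its orbit meets $F_R$ only in a countable set together with an $\omega$-limit set which is a finite cycle or lies on finitely many analytic curves; hence $m(P_R\cap F_R)=0$, and, since $C(R)$ meets $F_R$ only along rotation domain cycles (see the discussion after Proposition \ref{prop.Krengel}), also $m(P_R\cap C(R)\cap F_R)=0$. Thus it is enough to prove $m(P_R\cap J_R)=0$. For the dissipative part I would invoke the criterion of Sullivan recalled just above the statement. Since $R$ is injective on the forward invariant set $P_R\cap J_R$, each $R^n$ is injective there, and, being holomorphic, non-singular. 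For a wandering set $M\subset P_R\cap J_R$, disjointness of $\{R^{-k}(M)\}_k$ is equivalent (using that each $R^j$ is surjective) to $R^k(M)\cap M=\emptyset$ for all $k\ge 1$; combined with the injectivity of $R^m$ on $P_R\cap J_R$ this gives $R^n(M)\cap R^m(M)=R^m\!\left(R^{n-m}(M)\cap M\right)=\emptyset$ whenever $n>m\ge 0$, so the forward images of $M$ are pairwise disjoint. Sullivan's criterion then forces $m(M)=0$, and since $D(R)\cap J_R$ is a countable union of such wandering sets we conclude $m(P_R\cap D(R)\cap J_R)=0$. Combined with the Fatou estimate this gives $m(P_R)=m(\mathcal{C})$, so it remains to prove $m(\mathcal{C})=0$.

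The last step uses the mixing hypothesis on $\mathcal{C}$. As explained in the discussion after Theorem \ref{th.mixing}, mixing with respect to the (not necessarily invariant) measure $\mathrm{Leb}|_{P_R}$ means exactly that the $|R^*|$-orbits converge weakly in $L_1$; in particular $|R^*|^n(\chi_{\mathcal{C}})$ has a weak $L_1$-limit $g_0\ge 0$, and weak continuity of the bounded operator $|R^*|$ forces $|R^*|g_0=g_0$. By Proposition \ref{prop.bounds}(2) with $\mu\equiv 1$ and $A=\bar{\C}$ the operator $|R^*|$ preserves the integral of a nonnegative function, so $\int g_0=m(\mathcal{C})=m(P_R)>0$; hence $g_0\,|dz|^2$, suitably normalized, is an absolutely continuous invariant probability measure. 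Each $|R^*|^n(\chi_{\mathcal{C}})$ is supported in $\overline{R^n(P_R\cap J_R)}\subset P_R\cap J_R$, a closed set, so the same holds for the weak limit $g_0$; by Proposition \ref{prop.Krengel} the support of $g_0$ must also lie in $SC(R)$, hence in $P_R\cap SC(R)$, which is null. This contradicts $\int g_0>0$, so $m(P_R)=0$.

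I expect the last step to be the main obstacle: one must make rigorous the passage from the stated "mixing" property to the existence, $|R^*|$-invariance and support control of the limiting density $g_0$ — in particular being careful that "mixing on the conservative part" refers to the restricted system obtained after the dissipative and Fatou pieces (shown null in the previous step) have been removed, and that a weak $L_1$-limit of functions all supported on the fixed closed set $P_R\cap J_R$ remains supported there. By contrast, the Julia/Fatou reduction is standard Fatou-component theory, and the dissipative step is routine once one notes the equivalence between wandering and having empty forward returns together with the non-singularity of the iterates.
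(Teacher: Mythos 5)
Your proof is correct and follows the same overall decomposition as the paper's: the forward direction is vacuous, the dissipative part is handled by Sullivan's injectivity criterion, and the conservative part is handled by combining the mixing hypothesis with Proposition \ref{prop.Krengel} and the assumption that $P_R\cap SC(R)$ is null. The difference lies in how the conservative part is finished. The paper invokes Krengel's Theorem 1.4 (a positive $L_1$-contraction with weakly convergent orbits and \emph{no} non-zero fixed point has orbits converging strongly to $0$), the absence of fixed points being supplied by Proposition \ref{prop.Krengel}, and then computes $m(C(R))=\lim_n\int_{R^{-n}(C(R))}\chi_{C(R)}=0$ from the integral-preservation identity. You run the same ingredients in the opposite direction: you extract the weak $L_1$-limit $g_0$ of $|R^*|^n(\chi_{\mathcal{C}})$, use weak-weak continuity of the bounded operator $|R^*|$ (plus the fact that the shifted sequence has the same weak limit) to see that $g_0$ is a fixed density with $\int g_0=m(\mathcal{C})>0$ and support in $P_R$, and then contradict Proposition \ref{prop.Krengel}, since such a density must be carried by the null set $P_R\cap SC(R)$. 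Your version is more elementary in that it avoids citing Krengel's theorem, at the modest cost of the observations that a weak $L_1$-limit of functions vanishing off a fixed measurable set also vanishes there. You are also more careful than the paper on two points it glosses over: that $m(P_R\cap F_R)=0$, so that Sullivan's criterion (stated only for subsets of $J_R$) genuinely applies to the dissipative part of $P_R$, and that conservativity for the restricted system on $P_R$ agrees with global conservativity there. Both arguments rest equally on reading ``mixing with respect to a non-invariant measure'' as weak $L_1$-convergence of the $|R^*|$-orbits, which the paper only makes explicit in the discussion following Theorem \ref{th.mixing}; with that reading fixed, your proof is complete.
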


\begin{proof}
If the Lebesgue measure of $P_R$ is zero then clearly $R$ is mixing on $P_R$ 
with respect to Lebesgue measure.
On the contrary, assume that the Lebesgue measure of $P_R$ is positive. Let us 
consider the dynamics of $R$ restricted to $P_R$ and let $C(P_R)$ and $D(P_R)$ 
be the conservative and the dissipative parts of this action, respectively. As 
$R$ is injective on $P_R\cap J_R$, by Sullivan's lemma stated above, the Lebesgue 
measure of $D(P_R)$ is zero. Since the measure of $SC(R)\cap P_R$ is zero, then 
by Proposition \ref{prop.Krengel} there are no invariant absolutely continuous 
measures on $P_R$. 

Assume that $R$ is mixing on the conservative part of $P_R$. Using 
Theorem 1.4 on page 255 of \cite{Krengel} which states that every positive 
contraction $E$ on a $L_1$ space has orbits strongly convergent to $0$ whenever 
$E$ has weakly convergent orbits and $E$ has no non-zero fixed points. Therefore, for every $\phi\in L_1(C(R))$ the 
orbit  of $|R^*|^n(\phi)$ in  converges strongly to $0.$ As $R(C(R))=C(R)$ 
then $$ 0=\lim_{n\rightarrow \infty}\int_{C(R)}|R^{*n}|\chi_{C(R)}= 
\lim_{n\rightarrow 
\infty}\int_{R^{-n}(C(R))} \chi_{C(R)}=\int \chi_{C(R)}.$$

Thus the Lebesgue measure of $C(R)$ is $0$ and hence $P_R$ has also Lebesgue 
measure  $0$. \end{proof}

\bibliographystyle{amsplain}
\bibliography{workbib}

\providecommand{\bysame}{\leavevmode\hbox to3em{\hrulefill}\thinspace}
\providecommand{\MR}{\relax\ifhmode\unskip\space\fi MR }
\providecommand{\MRhref}[2]{%
  \href{http://www.ams.org/mathscinet-getitem?mr=#1}{#2}
}
\providecommand{\href}[2]{#2}
\begin{thebibliography}{10}

\bibitem{AaronsonBook}
J.~Aaronson, \emph{{An introduction to infinite ergodic theory}}, {Mathematical
  Surveys and Monographs}, vol.~50, American Mathematical Society, Providence,
  RI, 1997.

\bibitem{BonetWolf}
J.~Bonet and E.~Wolf, \emph{{A note on weighted {B}anach spaces of holomorphic
  functions}}, Arch. Math. (Basel) \textbf{81} (2003), no.~6, 650--654.
  \MR{2029241 (2004i:46037)}

\bibitem{CMdecomp}
C.~Cabrera and P.~Makienko, \emph{{On decomposable rational maps}}, Conform.
  Geom. Dyn. \textbf{15} (2011), 210--218. \MR{2869014 (2012k:37104)}

\bibitem{CabMakHyp}
C.~Cabrera and P.~Makienko, \emph{On hyperbolic metric and invariant {B}eltrami
  differentials for rational maps}, J. Geom. Anal. \textbf{28} (2018), no.~3,
  2346--2360.

\bibitem{DHTop}
A.~Douady and J.~H. Hubbard, \emph{{A proof of {T}hurston's topological
  characterization of rational functions}}, Acta Math. \textbf{171} (1993),
  263--297.

\bibitem{DunfordSchwartz}
N.~Dunford and J.T. Schwartz, \emph{{Linear operators. {P}art {I}}}, {Wiley
  Classics Library}, John Wiley \& Sons, Inc., New York, 1988, General theory,
  With the assistance of William G. Bade and Robert G. Bartle, Reprint of the
  1958 original, A Wiley-Interscience Publication. \MR{1009162 (90g:47001a)}

\bibitem{FonfLinRubinov}
V.~Fonf, M.~Lin, and A.~Rubinov, \emph{{On the uniform ergodic theorem in
  {B}anach spaces that do not contain duals}}, Studia Math. \textbf{121}
  (1996), no.~1, 67--85. \MR{1414895 (97i:47014)}

\bibitem{GamelinUniAlg}
T.~W. Gamelin, \emph{{Uniform algebras}}, Chelsea Publishing Co, NY., 1984.

\bibitem{GardLakic}
F.~Gardiner and N.~Lakic, \emph{{Quasiconformal {T}eichm{\"u}ller theory}},
  {Mathematical Surveys and Monographs}, vol.~76, American Mathematical
  Society, Providence, RI, 2000. \MR{1730906 (2001d:32016)}

\bibitem{Krabook}
I.~Kra, \emph{{Automorphic forms and {K}leinian groups}}, W. A. Benjamin, Inc.,
  Reading, Mass., 1972, Mathematics Lecture Note Series. \MR{0357775 (50
  \#10242)}

\bibitem{Krengel}
U.~Krengel, \emph{{Ergodic theorems}}, {de Gruyter Studies in Mathematics},
  vol.~6, Walter de Gruyter \& Co., Berlin, 1985, With a supplement by Antoine
  Brunel. \MR{797411 (87i:28001)}

\bibitem{Lotz}
H.~P. Lotz, \emph{{Uniform convergence of operators on {$L^\infty$} and similar
  spaces}}, Math. Z. \textbf{190} (1985), no.~2, 207--220. \MR{797538
  (87e:47032)}

\bibitem{L}
M.~Lyubich, \emph{{Dynamics of the rational transforms; the topological
  picture}}, Russian Math. Surveys (1986).

\bibitem{LyuTypical}
M.~Yu. Lyubich, \emph{{Typical behavior of trajectories of the rational mapping
  of a sphere}}, Dokl. Akad. Nauk SSSR \textbf{268} (1983), no.~1, 29--32.
  \MR{687919 (84f:30036)}

\bibitem{MSS}
R.~Ma{{\~n}{\'e}}, P.~Sad, and D.~Sullivan, \emph{{On the dynamics of rational
  maps}}, Ann. Scien. Ec. Norm. Sup. Paris(4) (1983).

\bibitem{MakRuelle}
P.~Makienko, \emph{{Remarks on the Ruelle operator and the invariant line
  fields problem: II}}, Ergodic Theory and Dynamical Systems \textbf{25}
  (2005), no.~05, 1561--1581.

\bibitem{Mc1}
C.~McMullen, \emph{{Complex dynamics and renormalization}}, {Annals of
  Mathematics Studies}, vol. 135, Princeton University Press, Princeton, NJ,
  1994.

\bibitem{McMSull}
C.~McMullen and D.~Sullivan, \emph{{Quasiconformal homeomorphisms and dynamics.
  {III}. {T}he {T}eichm{\"u}ller space of a holomorphic dynamical system}},
  Adv. Math. \textbf{135} (1998), no.~2, 351--395.

\bibitem{MilLat}
J.~Milnor, \emph{{On {L}att{\`e}s maps}}, {Dynamics on the {R}iemann sphere},
  Eur. Math. Soc., Z{\"u}rich, 2006, pp.~9--43. \MR{2348953 (2009h:37090)}

\bibitem{HarmandWWerner}
{P. Harmand}, D.~Werner, and W.~Werner, \emph{{{$M$}-ideals in {B}anach spaces
  and {B}anach algebras}}, {Lecture Notes in Mathematics}, vol. 1547,
  Springer-Verlag, Berlin, 1993. \MR{1238713}

\bibitem{ReesErg}
M.~Rees, \emph{{Positive measure sets of ergodic rational maps}}, Ann. Sci.
  {\'E}cole Norm. Sup. (4) \textbf{19} (1986), no.~3, 383--407.

\bibitem{RuelleZeta}
D.~Ruelle, \emph{{Zeta-functions for expanding maps and {A}nosov flows}},
  Invent. Math. \textbf{34} (1976), no.~3, 231--242.

\bibitem{Taomeasure}
T.~Tao, \emph{{An introduction to measure theory}}, {Graduate Studies in
  Mathematics}, vol. 126, American Mathematical Society, Providence, RI, 2011.

\bibitem{ZdunikParab}
A.~Zdunik, \emph{{Parabolic orbifolds and the dimension of the maximal measure
  for rational maps}}, Invent. Math. \textbf{99} (1990), no.~3, 627--649.
  \MR{1032883 (90m:58120)}

\end{thebibliography}

\end{document}